\newtheorem{theorem}{Theorem}[section]
\newtheorem{proposition}[theorem]{Proposition}
\newtheorem{lemma}[theorem]{Lemma}
\newtheorem{MTA}{Theorem A}
\newtheorem{MTB}{Theorem B}
\newtheorem{MTC}{Theorem C}
\newtheorem{proof}{\textmd{\textit{Proof.}}}
\newtheorem{remark}[theorem]{Remark}
\newtheorem{example}[theorem]{Example}
\newcommand{\qedd}{\hfill \Box}
\title{The cut locus and distance function from a closed subset of a Finsler manifold
\footnote{
Mathematics Subject Classification (2010)\,:\,53C60, 53C22.}
\footnote{
Keywords: non-reversible Finsler surfaces, {$N$-segment, cut locus,
local tree, 
 distance function}.}
}
\author{Minoru TANAKA $\cdot$ Sorin V. SABAU}
\date{}
\begin{document}

\maketitle

\begin{abstract}
We characterize the differentiable points of the distance function from a closed subset $N$ of an arbitrary dimensional Finsler manifold in terms of the number of $N$-segments. 
In the case of a 2-dimensional Finsler manifold, we prove the structure theorem of the cut locus of a closed subset $N$, namely that it
is a local tree, it is made of countably many rectifiable Jordan arcs 
except for the endpoints of the cut locus and that an intrinsic metric can be introduced in the cut locus and  its intrinsic and induced topologies coincide.
{\bf We should point out that these are new results even for Riemannian manifolds.}
\end{abstract}

\section{Introduction}
Among the variational problems, there is an interesting problem which is called the Zermelo navigation problem:\\
{\it Find the paths of shortest travel time from an originating point to
a destination under the influence of  a wind or a current when we travel by boat capable of a certain maximum speed.}\medskip\\  
 The shortest paths of this problem are geodesics of a Finsler  
metric. Notice that the shortest paths are geodesics of a Riemannian metric
only when there is no wind and no current. 

Hence, if we restrict ourselves to the variational problems of a Riemannian manifold, we must exclude such an interesting variational problem. This is the reason why we are interested in the variational problems of a Finsler manifold.

Any geodesic $\gamma$ emanating from a point $p$ in a compact Riemannian manifold looses the minimizing property at a point $q$ on $\gamma.$  Such a point $q$ is called a {\it cut point} of $p$ along $\gamma.$ The {\it cut locus} of a point $p$ is the set of all cut points along geodesics emanating from $p.$ 
The cut locus often appears as an obstacle when we try to prove some 
global structure theorems of a Riemannian manifold. For example, Ambrose's problem is easily solved if the cut locus of the base point is empty. Hebda (\cite{H}) and Itoh (\cite{I}) solved this problem affirmatively and independently in the 2-dimensional case, by proving that the 1-dimensional Hausdorff measure of the cut locus of a point in an arbitrary
 compact
subset is finite. It is still open for an arbitrary dimensional Riemannian manifold.  
The cut locus is also a vital notion in analysis, where the cut locus appears as a singular set. In fact, the cut locus of a point $p$ in a complete Riemannian manifold equals the closure of the set of all non-differentiable points of the distance function from the point $p.$

By being motivated by optimal control problems in space and quantum dynamics, 
the joint work (\cite{BCST})  was accomplished by Bonnard, Caillau, Sinclair and Tanaka. 
In this paper, the structure of the cut locus was determined for a class of 2-spheres of revolution which contains oblate ellipsoids, and this structure theorem gives global optimal results in orbital transfer and for Lindblad equations in quantum control.

The following property of the cut locus has played a crucial role in optimal transport problems (see 
\cite{V1}).
\medskip\\
{\it The distance function to the cut locus of a point $p$ of a complete Riemannian manifold is locally Lipschitz on the unit sphere in the tangent space at $p.$}\medskip\\
This property is often applied in many papers of optimal transport problems:
For example see  Loeper-Villani (\cite{LV}), Figalli-Rifford (\cite{FR}), Figalli-Rifford-Villani (\cite{FRV1}, \cite{FRV2}, \cite{FRV3}, \cite{FRV4}),  Figalli-Villani (\cite{FV}) and Villani (\cite{V2}).

Since H. Poincar\'e introduced the notion of the cut locus  in 1905,
the cut locus of a point or  a submanifold in a Riemannian manifold has been investigated by
many researchers 
( \cite{B}).
In spite of this fact, any structure theorem of the cut locus has not been established yet except for special Riemannian manifolds.
The main difficulty of formulating and proving such a theorem lies in 
the fact that the cut locus can be as complicated as a fractal set. 
In fact,
Gluck and Singer (\cite{GS}) constructed a smooth 2-sphere of revolution with positive Gaussian curvature admitting a point whose cut point is 
non-triangulable. Moreover, it is conjectured that the Hausdorff 
dimension of the cut locus would be a non-integer, if the manifold is not 
smooth enough.

However, it has been shown that the Hausdorff dimension of the cut locus of a point
is an integer  for a smooth Riemannian manifold (see \cite{ITd}) and 
 that the distance function to the cut locus of a
closed submanifold is locally Lipschitz for a smooth Riemannian manifold as well as for the Finslerian case
(see \cite{IT}, \cite{LN}).
 Hence, the cut locus has  enough differentiability for nonsmooth analysis (\cite{Cl}).

 Nevertheless, if we restrict ourselves to a surface, the structure theorem for the Riemannian cut locus has been established. Indeed, the detailed structure of the cut locus of  a 
 point or a smooth Jordan arc in a Riemannian 2-manifold have been thoroughly
 investigated (see \cite{SST}, \cite{H}). For example,
 Hebda proved in \cite{H} that the cut locus $C_p$ of a point $p$ in a 
 complete 2-dimensional 
  Riemannian manifold has a local-tree structure and that any two cut points
  of $p$ can be joined by a rectifiable arc in the cut locus $C_p$ if these two cut points are in the same connected component.
    Here, a topological space $X$ is called a {\it local tree }
     if for any $x\in X$ and any neighborhood $U$ of $x,$
      there exists an open neighborhood $V\subset U$ of $x$ such that any two points in $V$ can be joined by a unique continuous arc.

 In the present paper, the structure theorem of the cut locus of a 
       closed subset of a Finsler surface will be proved. 
       It should be noted that the investigations of the cut locus of a closed
       subset are scarce even in the case of a Riemannian manifold.
We will also investigate the differentiability of the distance function  from a closed subset of an arbitrary dimensional Finsler manifold. 
   

   It is well-known that the differentiability of the distance function is closely related to the cut locus.
      For example, it is known that the squared distance function from
       a point $p$ in a complete Riemannian manifold is differentiable at a point $q$ if and only if there exists a unique minimal geodesic segment joining
        $p$ to $q$ and that the squared distance function is smooth outside of the cut locus of $p.$ 
      One of our main theorems (Theorem A)  generalizes the facts above.

Let $N$ be a closed subset of a forward complete Finsler manifold $(M,F).$ Roughly speaking, the Finsler manifold is a differentiable manifold with a norm on each tangent space. The precise definition of  the Finsler manifold  and some necessary fundamental notation and  formulas will be reviewed later.

A (unit speed) geodesic segment $\gamma :[0,a]\to M$ is called an $N$-{\it segment} if
$d(N,\gamma(t))=t$ holds on $[0,a],$ where 
$d(N,p):=\min\{\; d(q,p)\; |\; q\in N\}$ for each point $p\in M.$
If a non-constant unit speed $N$-segment $\gamma:[0,a]\to M$ is maximal as an
 $N$-segment, then the endpoint $\gamma(a)$ is called a {\it cut point} of 
 $N$ along $\gamma.$
 The {\it cut locus} $C_N$ of $N$  is the set of all cut points along all non-constant $N$-segments.

One of our main theorems is on the distance function from a closed subset of a Finsler manifold.
The research
of the distance function $d_N(\cdot):=d(N, \cdot)$ from the closed subset 
$N$ is fundamental in the study of variational problems. 
For example, the viscosity solution of the Hamilton-Jacobi equation
is given by the flow of the gradient vector of the distance function $d_N,$ when $N$ is the smooth boundary of a relatively compact domain in Euclidean space (see \cite{LN}).

Although we do not assume any differentiability condition for the closed 
subset $N\subset M$, we may prove the following remarkable result.

\begin{MTA}
 Let $N$ be a closed subset of a forward complete arbitrary dimensional Finsler manifold $(M,F).$ Then, the distance function $d_N$ from the subset $N$ is differentiable at a point $q\in M\setminus N$ if and only if $q$ admits a unique $N$-segment.
\end{MTA}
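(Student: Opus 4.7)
The plan is to handle the two implications separately; strong convexity of the Finsler norm on each tangent space is the key analytic tool in both. Throughout I use that $d_N$ is 1-Lipschitz with respect to the forward Finsler distance (from the triangle inequality $d(n,p_2)\le d(n,p_1)+d(p_1,p_2)$ for $n\in N$), that any geodesic is determined by one point and its velocity there, and that $d_N(\gamma(t))=t$ along every $N$-segment $\gamma$.

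For \emph{differentiability $\Rightarrow$ uniqueness}, let $\gamma:[0,L]\to M$ be any $N$-segment to $q$. Differentiating $d_N(\gamma(t))=t$ at $t=L$ gives $dd_N|_q(\gamma'(L))=1$, while the 1-Lipschitz bound yields $dd_N|_q(v)\le F(v)$ for every $v\in T_qM$. Hence the dual Finsler norm of $\xi:=dd_N|_q$ is at most $1$, and is attained on the unit vector $\gamma'(L)$. Strong convexity of the unit ball of $F$ implies that a covector of dual norm $\le 1$ attains the value $1$ on at most one unit vector, so $\gamma'(L)$ is uniquely determined by $\xi$, and therefore so is the geodesic $\gamma$ itself.

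For \emph{uniqueness $\Rightarrow$ differentiability}, assume $\gamma$ is the unique $N$-segment to $q$ and set $v_q:=\gamma'(L)$. The first step is a compactness claim: if $p_n\to q$ and $\sigma_n:[0,L_n]\to M$ are $N$-segments to $p_n$ (existing by forward completeness), then $L_n=d_N(p_n)\to L$ by continuity of $d_N$, and an Arzel\`a--Ascoli argument for Finsler geodesics yields a subsequential limit $\sigma_\infty:[0,L]\to M$, a unit-speed geodesic ending at $q$; closedness of $N$ places $\sigma_\infty(0)$ in $N$, and passing $d_N(\sigma_n(t))=t$ to the limit shows $d_N(\sigma_\infty(t))=t$, so $\sigma_\infty$ is an $N$-segment. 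By uniqueness $\sigma_\infty=\gamma$, so the arrival velocities $\sigma_n'(L_n)$ converge to $v_q$. The second step invokes the previous direction at each differentiability point $p$ near $q$ (such points have full measure by Rademacher's theorem, since $d_N$ is locally Lipschitz): there $dd_N|_p$ is the Legendre dual of the arrival velocity of the unique $N$-segment to $p$. Combining the two steps, the a.e.-defined map $p\mapsto dd_N|_p$ extends continuously to $q$ by the Legendre dual of $v_q$. The classical fact that a locally Lipschitz function whose a.e.\ gradient extends continuously to a point is $C^1$ near that point (proved by absolute continuity on lines and Fubini) then yields that $d_N$ is differentiable at $q$.

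The main obstacle is the compactness step above, specifically verifying that the subsequential limit $\sigma_\infty$ is actually an $N$-segment so that uniqueness of $\gamma$ can be invoked. One must simultaneously control the convergence $L_n\to L$ (using continuity of $d_N$), ensure $\sigma_\infty(0)\in N$ (using closedness), and deal with parameters $t$ near $L$ where $L_n<L$ may occur, so that $\sigma_n(t)$ is not literally defined for the same $t$ as $\sigma_\infty(t)$. Packaging these into a clean Arzel\`a--Ascoli argument on the geodesic flow of the unit tangent bundle, in the Finsler rather than Riemannian setting where reversibility fails, is the delicate part.
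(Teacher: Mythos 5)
Your proof is correct in substance, but it takes a genuinely different route from the paper's (Theorem 2.3 there) in both directions. For \emph{differentiability $\Rightarrow$ uniqueness}, the paper extends the $N$-segment $\alpha$ as a geodesic past $q$, uses two-sided differentiability of $d_N\circ\alpha$ together with the generalized first variation formula (Proposition 2.1) to conclude $g_{\dot\gamma(l)}(\dot\gamma(l),\dot\alpha(l))\geq 1$ for every $\gamma\in\Gamma_N(q)$, and then applies the fundamental inequality (Lemma 1.2.3 of Shen); your supporting-covector argument ($dd_N|_q\leq F$ from the Lipschitz bound, equality on each arrival velocity, strict convexity of the indicatrix forcing a unique maximizer) reaches the same conclusion more directly, without the first variation formula or the geodesic extension. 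For \emph{uniqueness $\Rightarrow$ differentiability}, the paper computes every one-sided directional derivative of $d_N\circ\exp_q$ at the origin via Proposition 2.1, observes they are all given by the single linear functional $g_{\dot\alpha(l)}(\dot\alpha(l),\cdot)$, and upgrades this to differentiability by a purely Minkowski-space compactness lemma (Lemma 2.4: a Lipschitz function whose one-sided directional derivatives agree with a linear functional is differentiable at the origin); you instead go through Rademacher, the identification of the a.e.\ differential with the Legendre dual of the arrival velocity (which feeds the first implication into the second), and an ACL--Fubini argument on rays. Both routes rest on the same stability input --- that $N$-segments to $p_n\to q$ subconverge to an $N$-segment to $q$, hence to the unique one --- which you rightly single out as the delicate point and which the paper buries in the convergence hypothesis of Proposition 2.1. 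The paper's route buys a self-contained, measure-theory-free argument; yours buys a shorter first direction and makes the geometric meaning of $dd_N|_q$ (the Legendre transform of the arrival velocity) explicit. Two small cautions: the ``classical fact'' is misstated --- continuity of the a.e.\ gradient at the single point $q$ does not make $d_N$ of class $C^1$ \emph{near} $q$, only differentiable \emph{at} $q$, which is fortunately the conclusion you actually draw and the one the Fubini argument delivers; and in the compactness step, forward completeness alone does not make backward balls compact, so the precompactness of the initial points $\sigma_n(0)$ needs a word (locally it follows from the uniform comparability of $d(x,y)$ and $d(y,x)$ on compact sets, Lemma 4.1 of the paper).
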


Theorem B and Theorem C are our main theorems on the cut locus. 
 The theorems corresponding to Theorems B and C  have been  proved in \cite{ShT} for the cut locus of a compact subset of an Alexandrov surface.
We  should point out that the Toponogov comparison theorem was a key tool for proving main theorems in \cite{ShT}, but the Toponogov comparison theorem does not hold for Finsler manifolds. Hence, completely different proofs will be given to Theorems B and C.

\begin{MTB}
 Let $N$ be a closed subset of a forward complete 2-dimensional Finsler manifold $(M,F).$
Then, the cut locus $C_N$ of $N$ satisfies the following properties:
\begin{enumerate}
\item $C_N$ is a local tree and any  two cut points on the same connected component of $C_N$ can be joined by a rectifiable curve in $C_N.$
\item The topology of $C_N$ induced from the intrinsic metric $\delta$ coincides with the topology induced from $(M,F).$
\item  The space $C_N$ with the intrinsic metric $\delta$ is forward complete.
\item 
{The cut locus $C_N$ is a union of countably many Jordan arcs except for the endpoints of $C_N.$}
\end{enumerate}  
\end{MTB}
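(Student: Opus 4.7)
The plan is to imitate the Riemannian strategy of Hebda and Shiohama--Tanaka, but with every appeal to Toponogov's comparison theorem or conjugate-point arguments replaced by the Essential, Model and Isotopy Lemmas announced above, together with Theorem A.

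\medskip\noindent
\textbf{Step 1 (local picture at a cut point).}
Fix $q \in C_N$ and let $\gamma_1,\dots,\gamma_k$ be the distinct $N$-segments ending at $q$. Because $d_N$ is $1$-Lipschitz on a surface and the tangent vectors $\dot\gamma_i(L_i)$ are unit, a compactness argument forces $k<\infty$. If $k\ge 2$ the reversed directions $-\dot\gamma_i(L_i)$ split a small punctured geodesic disk $D^*(q,r)$ into finitely many open sectors $S_1,\dots,S_k$, while if $k=1$ then $q$ will turn out to be an endpoint of $C_N$. Using the Model Lemma one shows that each sector $S_j$ either contains no cut points near $q$, or contains a single continuous branch of $C_N$ issuing from $q$; and using the Isotopy Lemma one shows this branch is a rectifiable Jordan arc. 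The critical point is the absence of small loops of $C_N$ around $q$: any such loop would bound a disk $D$, and by Theorem A the distance function $d_N$ would be smooth on $D\setminus C_N$; tracing $N$-segments through $D$ and applying the Essential Lemma to the would-be two-sided approach of cut points yields a contradiction with the $1$-Lipschitz character of $d_N$.

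\medskip\noindent
\textbf{Step 2 (parts (1) and (2)).}
From Step 1, every $q\in C_N$ has a neighborhood $V$ in $(M,F)$ such that $V\cap C_N$ is a finite union of rectifiable Jordan arcs meeting only at $q$; this is a tree, so $C_N$ is a local tree, and any two points of $V\cap C_N$ are joined by a unique continuous arc which is rectifiable. Given two cut points on the same connected component of $C_N$, choose a continuous path between them in $C_N$, cover it by finitely many such neighborhoods (Lebesgue number), and concatenate the local rectifiable arcs. For part (2), the inequality $d\le\delta$ gives continuity of $\mathrm{id}\colon(C_N,\delta)\to(C_N,d)$. For the other direction, the arcs produced in Step 1 provide, in each small neighborhood of $q$, an explicit bound $\delta(q,q')\le C\,d(q,q')$ because the Jordan arcs have controlled Finsler length (this is where the Finslerian analogue of the Hebda--Itoh length estimate is used).

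\medskip\noindent
\textbf{Step 3 (parts (3) and (4)).}
For (3), let $(q_n)$ be a forward $\delta$-Cauchy sequence in $C_N$. Since $d\le\delta$, it is also forward Cauchy in $(M,F)$, hence converges to some $q_\infty\in M$ by forward completeness; closedness of $C_N$ (which follows from upper semi-continuity of the number of $N$-segments, itself a consequence of Theorem A) yields $q_\infty\in C_N$, and the length control from Step 2 upgrades convergence to the $\delta$-topology. For (4), exhaust $M$ by a countable family of compact sets $K_m$; on each $K_m$, the set of non-endpoint cut points is covered by finitely many neighborhoods of the type produced in Step 1, each contributing finitely many Jordan arcs. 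Taking the union over $m$ writes $C_N$ minus its endpoints as a countable union of Jordan arcs.

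\medskip\noindent
\textbf{Main obstacle.}
The hard part is Step 1, specifically ruling out small closed loops of $C_N$ at an interior cut point and producing the Jordan arc in each sector. In the Riemannian case the triangle comparison (Toponogov) does the heavy lifting, but the authors emphasize that it fails in Finsler geometry. The substitute must be the combined force of the Model and Isotopy Lemmas applied to sector-by-sector model configurations, controlled through the $N$-segment foliation guaranteed by Theorem A. Once this local tree-with-Jordan-arcs picture is established, parts (2)--(4) follow by standard metric-geometry arguments.
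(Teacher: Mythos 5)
Your Step 1 rests on two claims that are false in this setting, and the rest of the argument inherits the damage. First, a cut point of a closed set $N$ need \emph{not} admit only finitely many $N$-segments, and the cut locus near a branch point need not be a finite union of Jordan arcs: the paper itself recalls (Remark \ref{rem9.4}, citing Gluck--Singer) that even for a point on a convex surface of revolution there are branch cut points with infinitely many ramifying branches, and a simple example such as $N$ a round circle in the Euclidean plane gives a cut point with a continuum of $N$-segments. So the ``finitely many sectors, finite union of rectifiable arcs meeting only at $q$'' local picture is unavailable; consequently the Lebesgue-number concatenation in your Step 2 and the ``finitely many neighborhoods on each compact set'' count in your Step 3 for part (4) both fail. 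The paper instead proves the local tree property by a connectedness argument inside the set $W_I\cap W_J$ bounded by $N$-segments (Lemmas \ref{lem3.1}--\ref{lem3.4}, Proposition \ref{prop3.5}, Theorem \ref{th3.6}), and obtains countably many arcs in Theorem \ref{th9.3} only after proving countability of branch points (Lemmas \ref{lem9.1}--\ref{lem9.2}), not local finiteness. Second, your proof of part (3) uses closedness of $C_N$, which is explicitly false for a general closed subset $N$ (Example \ref{Ex2.6}); the paper has to work around this with the nested-sector Lemmas \ref{lem8.2}--\ref{lem8.3} to show that the $d$-limit of a $\delta$-Cauchy sequence is still a cut point.

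Beyond these, the genuinely hard content of Theorem B --- rectifiability of Jordan arcs in $C_N$ without Toponogov --- is not proved but delegated to an ``Essential Lemma'', ``Model Lemma'' and ``Isotopy Lemma'' that are never stated (they are empty theorem environments in the preamble, not results), and to an unproved ``Finslerian analogue of the Hebda--Itoh length estimate''. The paper's actual substitute machinery is the generalized first variation formulas (Propositions \ref{prop2.1}--\ref{prop2.2}), the existence of one-sided tangent directions along a cut-locus arc (Propositions \ref{prop5.1}--\ref{prop5.2}, Lemmas \ref{lem5.5}--\ref{lem5.6}), and the comparison of $d_N$ with the distance from an interior point of an $N$-segment (Lemmas \ref{lem4.3}--\ref{lem4.4}, \ref{lem6.1}--\ref{lem6.3}, Theorem \ref{th6.4}). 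Likewise, your claimed local bound $\delta(q,q')\le C\,d(q,q')$ for part (2) is not established and is not how the coincidence of topologies is obtained; the paper proves it sequentially, using that $d_N$ decreases at a definite rate along cut-locus arcs contained in shrinking sectors (Lemmas \ref{lem7.10}--\ref{lem7.12}, Theorem \ref{th7.13}). As it stands, the proposal is a plausible outline of the Riemannian strategy, but each of its load-bearing steps either invokes a nonexistent lemma or relies on a structural assumption (finite branching, closedness of $C_N$) that fails here.
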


\begin{MTC}
 There exists a set ${\cal E}\subset [0,\sup d_N)$ of measure zero 
with the following properties:      
 \begin{enumerate}
\item  For each 
$t\in(0,\sup d_N)\setminus{\cal E},  $ the set  {\it $d_N^{-1}(t)$ consists of locally finitely many mutually disjoint arcs.
In particular, if $N$ is compact, then $d_N^{-1}(t)$ consists of finitely many mutually disjoint circles.}
\item
{\it  For each $t\in(0,\sup d_N)\setminus {\cal E},$ any point $q\in d_N^{-1}(t)$ admits at most two $N$-segments.}
\end{enumerate} 
\end{MTC}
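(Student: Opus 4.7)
The plan is to isolate two independent sources of bad distance values: (i) the $d_N$-images of cut points with three or more $N$-segments (needed for part 2 and to prevent the level set from branching), and (ii) the critical values of $d_N$ restricted to each of the Jordan arcs that make up the cut locus (needed to prevent the level set from accumulating inside $C_N$). Their union will be the exceptional set $\mathcal{E}$, and for $t \notin \mathcal{E}$ the arc structure of $d_N^{-1}(t)$ should follow by combining Theorem~A on $M \setminus C_N$ with a local two-segment analysis at the remaining points.

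Concretely, I would let $B \subset C_N$ be the set of cut points admitting at least three $N$-segments. By Theorem~B the cut locus is a local tree covered, off its endpoints, by countably many rectifiable Jordan arcs $\{\gamma_j\}$. Each point of $B$ is necessarily a branch point of this local tree, since distinct $N$-segments produce distinct initial directions for the branches of $C_N$; such a branch point must be an endpoint of some $\gamma_j$, so $B$ is at most countable and $d_N(B)$ is a null set. After parametrizing each $\gamma_j$ by arc length and setting $f_j := d_N \circ \gamma_j$, the fact that $d_N$ is $1$-Lipschitz makes $f_j$ $1$-Lipschitz, hence absolutely continuous; by Luzin's property $(N)$ together with the Morse--Sard theorem in dimension one, the set $f_j\bigl(\{s : f_j \text{ is not differentiable at } s \text{ or } f_j'(s)=0\}\bigr)$ has Lebesgue measure zero. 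I would then define
\[
\mathcal{E} := d_N(B) \;\cup\; \bigcup_j f_j\bigl(\{s : f_j \text{ is not regular at } s\}\bigr),
\]
a countable union of null sets, hence null.

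Next I would fix $t \in (0,\sup d_N) \setminus \mathcal{E}$ and verify both conclusions. Part~2 is immediate: no $q \in d_N^{-1}(t)$ belongs to $B$, so $q$ admits at most two $N$-segments. For part~1 I would argue pointwise. If $q \in d_N^{-1}(t) \cap (M \setminus C_N)$, or if $q$ is an endpoint of $C_N$ lying in the level set, then $q$ admits a unique $N$-segment, so by Theorem~A the function $d_N$ is differentiable at $q$ with gradient of Finsler norm $1$, and the implicit function theorem produces a locally smooth curve through $q$. If $q \in C_N \cap d_N^{-1}(t)$ has exactly two $N$-segments, then near $q$ one can write $d_N$ as the minimum of two smooth distance-candidate functions associated with the two $N$-segment families, and this minimum's level set is a piecewise smooth arc with a corner at $q$. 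Because $t$ avoids the critical values of every $f_j$, the level set meets each $\gamma_j$ in a discrete set, preventing any accumulation within $C_N$. Patching these local descriptions gives that $d_N^{-1}(t)$ is a topological $1$-manifold, and local compactness of $M$ then yields local finiteness of its connected components. When $N$ is compact, forward completeness makes $\{d_N \leq t+\varepsilon\}$ compact, so $d_N^{-1}(t)$ is a compact boundaryless $1$-manifold, i.e., a finite disjoint union of circles.

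The main obstacle should be the two-segment local analysis in the Finsler setting: verifying that the minimum of the two smooth distance candidates really does produce an embedded arc through $q$, with no degenerate tangency between their level sets. In the Riemannian case this follows transparently from the fact that the two candidate gradients at $q$ are distinct unit vectors; in a Finsler space, the lack of an inner product forces a careful first-variation argument adapted to non-reversible norms in order to show that the two candidate level sets meet transversally at $q$, giving a genuine arc rather than a cusp or a spurious branch.
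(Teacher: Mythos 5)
Your construction of the exceptional set is essentially the paper's: the paper also takes $\mathcal{E}$ to be the $d_N$-images of the branch points (Lemma \ref{lem9.1} gives their countability directly, rather than through the arc decomposition) together with the bad values of $d_N\circ c_i$ along the countably many Jordan arcs of Theorem \ref{th9.3}, where non-differentiability points and zero-derivative points are killed by a one-dimensional Sard lemma for Lipschitz functions (Lemma \ref{lem9.6}); your part 2 is fine. The gap is in your derivation of part 1. First, you apply the implicit function theorem at a point $q\notin C_N$ (or at an endpoint) using only that $d_N$ is differentiable \emph{at} $q$ by Theorem \ref{th2.3}; but the implicit function theorem needs $C^1$ regularity on a neighborhood, and here $C_N$ need not be closed (Example \ref{Ex2.6}), so cut points with two $N$-segments can accumulate at $q$ and $d_N$ need not even be differentiable at nearby points. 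Second, near a cut point with exactly two $N$-segments you write $d_N$ as the minimum of ``two smooth distance-candidate functions''; for an arbitrary closed set $N$ there are no such smooth candidates --- the two families of $N$-segments through the arcs $I$ and $J$ only produce Lipschitz distance-type functions, so the transversality issue you flag is not the main obstruction: the smooth local model itself is unavailable. Third, knowing that $t$ is a regular value of each $f_j=d_N\circ\gamma_j$ gives discreteness of $f_j^{-1}(t)$ on each single arc, but countably many arcs may accumulate, so this does not by itself yield the local one-arc description of $d_N^{-1}(t)$ nor local finiteness of its components.

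The paper's missing ingredient, which repairs exactly these points, is critical point theory for the (non-smooth) distance function: a point $q$ is critical for $d_N$ if every tangent vector $v$ at $q$ satisfies $g_{\dot\gamma(l)}(\dot\gamma(l),v)\le 0$ for some $N$-segment $\gamma$ to $q$. Lemma \ref{lem9.5} shows that a critical point lying on a cut-locus arc $c_i$ forces $(d_N\circ c_i)'=0$ there, so avoiding the Sard set $\mathcal{E}_i$, the branch values, and the values of the (countably many) endpoints with more than one $N$-segment guarantees that $d_N^{-1}(t)$ contains no critical point at all. The topological conclusion --- locally finitely many disjoint arcs, and finitely many circles when $N$ is compact --- then follows from the Gromov--Cheeger isotopy lemma for $d_N$, which requires no differentiability of $d_N$ whatsoever, instead of from the implicit function theorem. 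If you replace your pointwise smooth analysis by this criticality argument, your choice of $\mathcal{E}$ (augmented by the values of multi-segment endpoints of $C_N$) gives the paper's proof.
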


\begin{remark}
Notice that the cut locus of a closed subset is not always closed (see Example \ref{Ex2.6}),
but the space $C_N$ with the intrinsic metric $\delta$ is forward complete for any closed subset of a forward complete Finsler surface. In the case where $N$ is a compact subset of an Alexandrov surface, 
all claims in Theorems B and C were proved  except for the third claim of Theorem B.
\end{remark}

\bigskip

Let us recall that  a {\it Finsler manifold} $(M,F)$ is an $n$-dimensional differential manifold $M$ endowed with a norm 
$F:TM\to [0,\infty)$ such that
\begin{enumerate}
\item $F$ is positive and differentiable on $\widetilde{TM}:=
TM\setminus\{0\}$;
\item $F$ is 1-positive homogeneous, i.e., $F(x,\lambda y)=\lambda F(x,y)$, $\lambda>0$, $(x,y)\in TM$;
\item the Hessian matrix $g_{ij}(x,y):=\dfrac{1}{2}\dfrac{\partial^2 F^2}{\partial y^i\partial y^j}$ is positive definite on $\widetilde{TM}.$
\end{enumerate}
Here $TM$ denotes the tangent bundle over the manifold $M.$
The Finsler structure is called {\it absolute homogeneous} if $F(x,-y)=F(x,y)$ because this leads to the homogeneity condition $F(x,\lambda y)=|\lambda| F(x,y)$, for any $\lambda\in \mathbb R$. 

By means of the Finsler fundamental function $F$ one defines the {\it indicatrix bundle} (or the Finslerian {\it unit sphere bundle}) by $SM:=\cup_{x\in M}S_xM$, where $S_xM:=\{y\in M\ | \ F(x,y)=1\}$. 

On a Finsler manifold $(M,F)$ one can  define the integral length of curves as follows. Let $\gamma:[a,b]\to M$ be a regular piecewise $C^{\infty}$-curve in $M$, and let $a:=t_0<t_1< \dots < t_k:=b$ be a partition of $[a,b]$ 
such that $\gamma|_{[t_{i-1},t_i]}$ is smooth for each interval $[t_{i-1},t_i]$, $i\in\{1,2,\dots,k\}$. The 
{\it integral length} of $\gamma$ is given by
\begin{equation}\label{integral length}
L(\gamma):=\sum_{i=1}^k\int_{t_{i-1}}^{t_i}F(\gamma(t),\dot\gamma(t))dt,
\end{equation}
where $\dot\gamma=\dfrac{d\gamma}{dt}$ is the tangent vector along the curve $\gamma|_{[t_{i-1},t_i]}$.
For such a partition, let us consider a regular piecewise $C^\infty$-map
\begin{equation}
\bar \gamma:(-\varepsilon,\varepsilon)\times[a,b]\to M,\quad (u,t)\mapsto \bar\gamma(u,t)
\end{equation}
such that $\bar\gamma|_{(-\varepsilon,\varepsilon)\times [t_{i-1},t_i]}$ is smooth for all $i\in\{1,2,\dots,k\}$, and
$\bar\gamma(0,t)=\gamma(t)$. Such a curve is called a regular piecewise $C^\infty$-{\it variation} of the base curve $\gamma(t)$, and the vector field  
$U(t):=\dfrac{\partial\bar\gamma}{\partial u}(0,t)$ is called the {\it variational vector field} of $\bar\gamma$. The integral length ${\cal L}(u)$ of $\bar\gamma(u,t)$ will be a function of $u$, defined as in \eqref{integral length}.

By a straightforward computation 
one obtains
\begin{equation}\label{arbitrary first variation}
\begin{split}
{\cal L}'(0)=& g_{\dot\gamma(b)}(\gamma,U)|_a^b+\sum_{i=1}^k\Bigl[g_{\dot\gamma(t_i^-)}(\dot\gamma(t_i^-),U(t_i))-
g_{\dot\gamma(t_i^+)}(\dot\gamma(t_i^+),U(t_i))\Bigl]\\
& -\int_a^bg_{\dot\gamma}(D_{\dot\gamma}{\dot\gamma},U)dt,
\end{split}
\end{equation}
where $D_{\dot\gamma}$ is the covariant derivative along $\gamma$ with respect to the Chern connection and $\gamma$ is arc length parametrized 
(see \cite{BCS}, p. 123, or \cite{S}, p. 77 for details of this computation as well as for the basis on Finslerian connections).  

A regular  piecewise $C^\infty$-curve $\gamma$ on a Finsler manifold is called a {\it geodesic} if ${\cal L}'(0)=0$ for all piecewise 
$C^\infty$-variations of $\gamma$ that keep its ends fixed.  
In terms of Chern connection a constant speed geodesic is characterized by the condition $D_{\dot\gamma}{\dot\gamma}=0$.

Let now $\gamma:[a,b]\to M$ be a unit speed geodesic and $\sigma:(-\varepsilon,\varepsilon)\to M$ a $C^\infty$-curve such that $\sigma(0)=\gamma(b)$. If one considers a $C^\infty$-variation 
$\bar\gamma:(-\varepsilon,\varepsilon)\times [a,b]\to M$ with one end fixed and another one on the curve $\sigma$, i.e. 
\begin{equation}
\bar\gamma(u,a)=\gamma(a),\qquad \bar\gamma(u,b)=\sigma(u),
\end{equation}
then formula \eqref{arbitrary first variation} implies that the integral length ${\cal L}(u)$ of the curve $\bar\gamma_u(t):=\bar\gamma(u,t)$, $t\in[a,b]$ satisfies the {\it first variation formula} (\cite{S}, p. 78):
\begin{equation}\label{first variation}
{\cal L}'(0)=g_{\dot\gamma(b)}({\dot\gamma(b)},\dot\sigma(0)).
\end{equation}
This formula is fundamental for our present study.

Using the integral length of a curve, one can define the Finslerian distance between two points on $M$. For any two points $p$, $q$ on $M$, let us denote by $\Omega_{p,q}$ the set of all piecewise $C^\infty$-curves $\gamma:[a,b]\to M$ such that $\gamma(a)=p$ and $\gamma(b)=q$. The map
\begin{equation}
d:M\times M\to [0,\infty),\qquad d(p,q):=\inf_{\gamma\in\Omega_{p,q}}L(\gamma)
\end{equation}
gives the {\it Finslerian distance} on $M$. It can be easily seen that $d$ is in general a quasi-distance, i.e., it has the properties
\begin{enumerate}
\item $d(p,q)\geq 0$, with equality if and only if $p=q$;
\item $d(p,q)\leq d(p,r)+d(r,q)$, with equality if and only if  $r$ lies on a
 minimal geodesic segment joining from $p$ to $q$ (triangle inequality).
\end{enumerate}

In the case where $(M,F)$ is absolutely homogeneous, the symmetry condition $d(p,q)=d(q,p)$ holds and therefore $(M,d)$ is a genuine metric space. We do not assume this symmetry condition in the present paper.

 Let us also recall that for a forward complete Finsler space $(M,F)$, the exponential map $\exp_p:T_pM\to M$ at an arbitrary point $p\in M$ is a surjective map (see \cite{BCS}, p. 152 for details). This will be always assumed in the present paper. 
 
 A unit speed geodesic on $M$ with initial conditions $\gamma(0)=p\in M$ and $\dot\gamma(0)=T\in S_pM$ can be written as $\gamma(t)=\exp_p(tT).$
Even though the exponential map is quite similar with the correspondent notion in Riemannian geometry, we point out two distinguished properties (see \cite{BCS}, p. 127 for  and details):
\begin{enumerate}
\item $\exp_x$ is only $C^1$ at the zero section of $TM$, i.e. for each fixed $x$, the map $\exp_xy$ is $C^1$ with respect to $y\in T_xM$, and $C^\infty$ away from it. Its derivative at the zero section is the identity map (Whitehead);
\item $\exp_x$ is $C^2$ at the zero section of $TM$ if and only if the Finsler structure is of Berwald type. In this case $\exp$ is actually $C^\infty$ on entire $TM$ (Akbar-Zadeh).
\end{enumerate}

\section{The distance function from a closed subset}\label{sec2}                                  
Let $N$ a closed subset of a forward complete Finsler manifold $(M,F).$ 
For each point $p\in M\setminus N,$
we denote by $\Gamma_N(p)$ the set of all unit speed $N$-segments to $p.$
Here a unit speed geodesic segment $\gamma :[0,a]\to M$ is called an $N$-{\it segment} if
$d(N,\gamma(t))=t$ holds on $[0,a],$ where 
$d(N,p):=\min\{\; d(q,p)\; |\; q\in N\}$ for each point $p\in M.$
If a non-constant unit speed $N$-segment $\gamma:[0,a]\to M$ is maximal as an $N$-segment, then the endpoint $\gamma(a)$ is called a {\it cut point} of $N$ along $\gamma.$ The {\it cut locus} $C_N$ is the set of all cut points along all non-constant $N$-segments.
Hence $C_N\cap N=\phi.$ Notice that there might exists a sequence of cut points convergent to $N$ if $N$ is not a submanifold. 
   
   \begin{remark}
  We discuss here only the forward complete case. Let us point out that in the Finsler case, unlikely the Riemannian counterpart, forward completeness is not equivalent to backward one, except the case when $M$ is compact.

   \end{remark}

 First, two versions of the first variation formula for the distance function from the closed set $N$ {will be} stated and proved. These formulas are fundamental for the study
 of the cut locus {hereafter}. In Proposition \ref{prop2.5}, as an application of the first variation formula, it is proved that the subset of the cut locus of $N$ that consists of all cut points of $N$ admitting at least two $N$-segments is dense in $C_N.$ 

The following proposition was proved in \cite{IT} for the distance function from a closed submanifold of a complete Riemannian manifold. Since the distance function on a Finsler manifold is not always symmetric, we have two versions of the first variation formula in our case.

\begin{proposition}{\bf (Generalized first variation formula, forward version)}\label{prop2.1}

Let $\{\gamma_i : [0,l_i]\to M\}$ be a convergent sequence of $N$-segments in an n-dimensional Finsler manifold $M$.
If the limit
\begin{equation}\label{limit formula forward}
v^f:=\lim_{i\to\infty}\frac{1}{F(\exp_x^{-1}(\gamma_i(l_i))}\exp_x^{-1}(\gamma_i(l_i)),
\end{equation} 
exists, then
\begin{equation}\label{eq2-2}
\begin{split}
g_{w_\infty}(v^f,w_\infty)=\min\{g_w(v^f,w)|\ & w \textrm{ is the unit velocity tangent vector}\\
&\textrm{ at } x 
 \textrm{ of } \gamma\in \Gamma_N(x)\}.
\end{split}
\end{equation}
Moreover,
\begin{equation}\label{eq2-3}
\lim_{i\to\infty}\frac{d(N,\gamma_i(l_i))-d(N,x)}{d(x,\gamma_i(l_i))}=g_{w_\infty}(v^f,w_\infty)
\end{equation}
holds.

Here $x:=\lim_{i\to \infty}\gamma_i(l_i),$  $w_\infty :=\lim_{i\to\infty}\dot\gamma_i(l_i)\in T_xM, $ and $\exp_x^{-1}$ denotes the local inverse map of the exponential map $\exp_x$ around the zero vector.
\end{proposition}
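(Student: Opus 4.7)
The plan is to sandwich the target ratio between an upper bound obtained by applying the first variation formula \eqref{first variation} to an arbitrary $N$-segment $\gamma\in\Gamma_N(x)$, and a lower bound obtained by applying it to the given sequence $\gamma_i$ itself. After passing to a subsequence I may assume $\gamma_i\to\gamma_\infty$, where $\gamma_\infty:[0,l_\infty]\to M$ is an $N$-segment to $x$ of length $l_\infty=d(N,x)$ with terminal velocity $w_\infty$. Abbreviate $y_i:=\gamma_i(l_i)$, $u_i:=d(x,y_i)=F(\exp_x^{-1}(y_i))$, and $v^f_i:=\exp_x^{-1}(y_i)/u_i$, so that $u_i\to 0$ and $v^f_i\to v^f$; the numerator in \eqref{eq2-3} is exactly $l_i-l_\infty$ because every $\gamma_i$ is an $N$-segment.

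For the upper bound, I fix $\gamma\in\Gamma_N(x)$ with terminal velocity $w$ and build a concrete variation
\[
\bar\gamma(u,t):=\exp_{\gamma(t)}\bigl(u\,\phi(t)\,V(t)\bigr),
\]
where $V$ is the parallel vector field along $\gamma$ with $V(l_\infty)=v^f$ and $\phi\in C^\infty([0,l_\infty])$ satisfies $\phi(0)=0$, $\phi(l_\infty)=1$. Its endpoint curve $\sigma(u):=\bar\gamma(u,l_\infty)$ has $\sigma(0)=x$, $\dot\sigma(0)=v^f$, while the foot stays at $\gamma(0)\in N$. Formula \eqref{first variation} yields $\cL'(0)=g_w(w,v^f)$, hence $\cL(u)=l_\infty+u\,g_w(v^f,w)+O(u^2)$. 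Since $\cL(u)$ is the length of a curve from $N$ to $\sigma(u)$, it bounds $d_N(\sigma(u))$ from above; and $d(\sigma(u_i),y_i),\,d(y_i,\sigma(u_i))=o(u_i)$ follows from $v^f_i\to v^f$ by comparing $F$ to an auxiliary Euclidean norm on a chart. The 1-Lipschitz-type estimate for $d_N$ then gives
\[
d_N(y_i)\le l_\infty+u_i\,g_w(v^f,w)+o(u_i),
\]
so $\limsup_i (l_i-l_\infty)/u_i\le g_w(v^f,w)$ for every velocity $w$ of an $N$-segment at $x$.

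For the lower bound, let $\tau_i:[0,d(y_i,x)]\to M$ be the unit-speed forward minimal geodesic from $y_i$ to $x$ (which exists by forward Hopf--Rinow) and set $v_i^b:=\dot\tau_i(0)$. Build a variation $\bar\gamma_i$ of $\gamma_i$ by the same recipe, now with foot fixed at $\gamma_i(0)\in N$ and endpoint curve $\tau_i$. Because $\gamma_i\to\gamma_\infty$ smoothly on $[0,l_\infty]$ and the $\tau_i$ contract into a fixed normal neighborhood of $x$, the family $\bar\gamma_i$ has second $u$-derivative bounded uniformly in $i$, hence a Taylor remainder that is $O(u^2)$ \emph{uniformly} in $i$. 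Using $\cL_i(u)\ge d_N(\tau_i(u))$ and $\cL_i(0)=l_i$, and evaluating at $u=d(y_i,x)$ (so $\tau_i(u)=x$),
\[
l_\infty-l_i\le d(y_i,x)\,g_{\dot\gamma_i(l_i)}\!\bigl(\dot\gamma_i(l_i),v_i^b\bigr)+O\bigl(d(y_i,x)^2\bigr).
\]
Working in a normal chart at $x$, the short geodesic from $y_i=\exp_x(u_iv^f_i)$ back to $x$ obeys $d(y_i,x)/u_i\to F(x,-v^f)$ and $v_i^b\to -v^f/F(x,-v^f)$; coupling these with $\dot\gamma_i(l_i)\to w_\infty$ and the bilinearity of $g$, the right-hand side divided by $u_i$ tends to $-g_{w_\infty}(v^f,w_\infty)$. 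Therefore $\liminf_i(l_i-l_\infty)/u_i\ge g_{w_\infty}(v^f,w_\infty)$.

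Combining the upper bound with the choice $w=w_\infty$ and the lower bound gives $\lim_i(l_i-l_\infty)/u_i=g_{w_\infty}(v^f,w_\infty)$, which is \eqref{eq2-3}; feeding this equality back into the upper bound for arbitrary $w$ yields the minimization identity \eqref{eq2-2}. The principal obstacle is the non-reversibility of $F$: the factor $F(x,-v^f)$ need not equal $1$, and it enters both the ratio $d(y_i,x)/u_i$ and the normalization of $v_i^b$, so the proof hinges on these two factors cancelling exactly. The secondary delicacy is securing the $O(u^2)$ remainder \emph{uniformly in $i$} for the family $\bar\gamma_i$, which is why the explicit cutoff-and-parallel-transport construction is preferable to a general smoothness appeal.
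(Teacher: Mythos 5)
Your argument is correct and reaches the same sandwich ($\limsup \le g_w(w,v^f)$ for every terminal velocity $w$ of an element of $\Gamma_N(x)$, $\liminf \ge g_{w_\infty}(w_\infty,v^f)$, equality at $w=w_\infty$), but the implementation differs from the paper's. The paper never constructs variations of the $N$-segments: it slides back to the points $\gamma_i(l-\delta)$ and $\beta(l-\delta)$ on the segments via the triangle inequality, and then Taylor-expands the two-point distance functions $t\mapsto d(\gamma_i(l-\delta),\sigma_i(t))$ and $t\mapsto d(\beta(l-\delta),\sigma_i(t))$ along the \emph{forward} minimal geodesics $\sigma_i$ from $x$ to $\gamma_i(l_i)$, using the first variation formula \eqref{first variation} only for these distance functions. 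Because everything is differentiated along $\sigma_i$, whose initial velocity is $\dot\sigma_i(0)\to v^f$, the reverse metric never appears: there is no $d(y_i,x)$, no factor $F(x,-v^f)$, and no need for the asymptotics $\exp_{y_i}^{-1}(x)\approx-\exp_x^{-1}(y_i)$. Your route instead produces explicit competitor curves (the cutoff-and-parallel-transport variations), which makes the inequalities $d_N\le\cL$ and $d_N\le\cL_i$ immediate and makes the uniform-in-$i$ second-order remainder visible from the construction; the price is the backward detour in the lower bound, where the cancellation of $F(x,-v^f)$ between $d(y_i,x)/d(x,y_i)$ and the normalization of $v_i^b$ must be checked (your computation of it is correct, by linearity of $g_{w_\infty}(w_\infty,\cdot)$, and the needed asymptotics follow from the $C^1$ property of $\exp$ at the zero section). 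Note also that your passage to a subsequence is unnecessary, since the hypothesis already assumes the sequence of $N$-segments converges, and it is this limit $N$-segment that realizes the minimum in \eqref{eq2-2}; the paper, like you, simply asserts the uniform constant $C$ in its Taylor estimates, so on that point the two proofs are at the same level of detail.
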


\begin{proof}
Let $\sigma_i: [0,d(x,\gamma_i(l_i) )]\to M$ denote the unit speed minimal geodesic segment emanating from $x$ to $\gamma_i(l_i)$, and 
hence
\begin{equation}\label{eq2-4}
\dot\sigma_i(0)=\frac{1}{F(\exp_x^{-1}(\gamma_i(l_i)))}\exp_x^{-1}(\gamma_i(l_i)).
\end{equation}
Let us choose  a positive constant $\delta$ in such a way that $\gamma(l-\delta)$ is a point of a strongly convex ball around $x.$ Here 
$\gamma:=\lim_{i\to\infty}\gamma_i$ and $l:= \lim_{i\to\infty}l_i.$ 

By the triangle inequality
\begin{equation*}
d(N,x)\leq d(N,\gamma_i(l-\delta))+d(\gamma_i(l-\delta),x),
\end{equation*}
and hence, we obtain
\begin{equation}{\label {eq2-5}}
d(N,\gamma_i(l_i))-d(N,x)\geq 
d(\gamma_i(l-\delta),\gamma_i(l_i))-d(\gamma_i(l-\delta),x).
\end{equation}

If we apply  the Taylor expansion formula for the function  $f(t):=d(\gamma_i(l-\delta), \sigma_i(t)), $ it follows from
the first variation formula \eqref{first variation} that there exists a positive constant $C$
such that  for any $i$ and any sufficiently small $|t|$
\begin{equation}\label{eq2-6}
d(\gamma_i(l-\delta),\sigma_i(t))\geq d(\gamma_i(l-\delta),x)+g_{w_i}(w_i,\dot\sigma_i(0))t-Ct^2,
\end{equation}
where $w_i$ denotes the unit velocity vector at $x$ of the minimal geodesic segment joining from $\gamma_i(l-\delta)$ to $x.$
 Thus, we obtain, by (\ref{eq2-5}) and (\ref{eq2-6})
\begin{equation}\label{eq2-7}
\liminf_{i\to\infty}\frac{d(N,\gamma_i(l_i))-d(N,x)}{d(x,\gamma_i(l_i))}\geq
\liminf_{i\to\infty}g_{w_i}(w_i,\dot\sigma_i(0)).
\end{equation}
Since $\lim_{i\to\infty}\gamma_i=\gamma,$
 we have
\begin{equation}\label{eq2-8}
\lim_{i\to \infty}w_i=w_\infty.
\end{equation}
From   \eqref{limit formula forward}, \eqref{eq2-4}, \eqref{eq2-7}, and  \eqref{eq2-8},
 it follows
\begin{equation}\label{eq2-9}
\liminf_{i\to\infty}\frac{d(N,\gamma_i(l_i))-d(N,x)}{d(x,\gamma_i(l_i))}\geq g_{w_{\infty}}(w_\infty,v^f).
\end{equation}

Let $\beta : [0,l] \to M$ be any unit speed $N$-segment in $\Gamma_N(x).$
The triangle inequality again gives 
\begin{equation}\label{eq2-10}
d(N,\gamma_i(l_i))\leq d(N,\beta(l-\delta))+d(\beta(l-\delta),\gamma_i(l_i)),
\end{equation}
where the positive constant $\delta$ is chosen in such a way that $\beta(l-\delta)$ lies in a strongly
convex ball  at $x$.

The relation \eqref{eq2-10} implies
\begin{equation} \label{eq2-11}
d(N,\gamma_i(l_i))-d(N,x)\leq d(\beta(l-\delta),\gamma_i(l_i))
-d(\beta(l-\delta),x)
\end{equation}
and from the Taylor expansion and the first variation formula \eqref{first variation} it results that   there exists a positive constant $C$ such that 
\begin{equation}\label{eq2-12}
d(\beta(l-\delta),\gamma_i(l_i))-d(\beta(l-\delta),x)\leq g_{w(\beta)}(w(\beta),\dot\sigma_i(0))d(x,\gamma_i(l_i))+Cd(x,\gamma_i(l_i))^2
\end{equation}
for any $i$, where $w(\beta):=\dot\beta(l)$.
Hence, 
for any $N$-segment $\beta\in\Gamma_N(x)$, we have
\begin{equation}\label{eq2-13}
\limsup_{i\to\infty}\frac{d(N,\gamma_i(l_i))-d(N,x)} 
{d(x,\gamma_i(l_i))}
\leq \lim_{i\to\infty} g_{w(\beta)}(w(\beta),\dot\sigma_i(0))=g_{w(\beta)}(w(\beta),v^f).
\end{equation}
In particular,
we obtain
\begin{equation}\label{eq2-14}
\limsup_{i\to\infty}\frac{d(N,\gamma_i(l_i))-d(N,x)}{d(x,\gamma_i(l_i))}\leq g_{w_\infty}(w_\infty,v^f).
\end{equation}
Now, the relation \eqref{eq2-3} follows from \eqref{eq2-9} and
\eqref{eq2-14}, while  (\ref{eq2-2}) is implied by \eqref{eq2-9} and \eqref{eq2-13}.
$\qedd$
\end{proof}

\begin{proposition}{\bf (Generalized first variation formula, backward version)}\label{prop2.2}

 Let $\{\gamma_i:[0,l_i]\to M\}$ be a convergent sequence of $N$-segments 
 in an n-dimensional Finsler manifold $M.$
If the limit
\begin{equation}\label{limit formula back}
v^b:=\lim_{i\to\infty}\frac{1}{F(\exp^{-1}_{\gamma_i(l_i)}(x))}\exp^{-1}_{\gamma_i(l_i)}(x)
\end{equation} 
exists,  
 then
\begin{equation*}
\begin{split}
g_{w_\infty}(-v^b,w_\infty)=\min\{g_w(-v^b,w)| \ & w \textrm{ is the unit velocity tangent vector }
\\
\textrm{ at } x 
&\textrm{ of } \gamma\in \Gamma_N(x)\}.
\end{split}
\end{equation*}

Moreover,
\begin{equation}\label{backward limit}
\lim_{i\to\infty}\frac{d(N,\gamma_i(l_i))-d(N,x)}{d(\gamma_i(l_i),x)}=g_{w_\infty}(-v^b,w_\infty)
\end{equation}
holds.

Here $x:=\lim_{i\to \infty}\gamma_i(l_i)$ and $w_\infty :=\lim_{i\to\infty}\dot \gamma_i(l_i)$.
\end{proposition}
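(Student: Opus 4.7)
The plan is to mirror the argument of Proposition \ref{prop2.1}, now reversing the role of the connecting geodesic: let $\sigma_i:[0,d_i]\to M$ with $d_i:=d(\gamma_i(l_i),x)$ be the unit speed minimal geodesic segment from $\gamma_i(l_i)$ to $x$, whose initial tangent $\dot\sigma_i(0)$ equals $\exp^{-1}_{\gamma_i(l_i)}(x)/F(\exp^{-1}_{\gamma_i(l_i)}(x))$ and therefore converges, by hypothesis, to $v^b$. The first variation formula \eqref{first variation} will be applied at the \emph{initial} endpoint $\sigma_i(0)=\gamma_i(l_i)$, with the small parameter $d_i$.

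For the upper bound on the limsup, fix an arbitrary $\beta\in\Gamma_N(x)$ and choose $\delta>0$ so small that $\beta(l-\delta)$ lies in a strongly convex ball around $x$. From $d(N,\beta(l-\delta))+d(\beta(l-\delta),x)=l=d(N,x)$ and the triangle inequality $d(N,\gamma_i(l_i))\le d(N,\beta(l-\delta))+d(\beta(l-\delta),\gamma_i(l_i))$ we obtain
$$d(N,\gamma_i(l_i))-d(N,x)\le d(\beta(l-\delta),\gamma_i(l_i))-d(\beta(l-\delta),x).$$
A Taylor expansion of $h_i(t):=d(\beta(l-\delta),\sigma_i(t))$ at $t=0$, combined with \eqref{first variation}, yields $h_i(0)-h_i(d_i)=-g_{w_i^\beta}(w_i^\beta,\dot\sigma_i(0))\,d_i+O(d_i^2)$, where $w_i^\beta$ is the unit tangent at $\gamma_i(l_i)$ of the minimal geodesic from $\beta(l-\delta)$ to $\gamma_i(l_i)$. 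Since $w_i^\beta\to w(\beta):=\dot\beta(l)$, dividing by $d_i$ and letting $i\to\infty$ gives
$$\limsup_{i\to\infty}\frac{d(N,\gamma_i(l_i))-d(N,x)}{d_i}\le g_{w(\beta)}(-v^b,w(\beta)).$$

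For the matching lower bound, apply the triangle inequality with $\gamma_i$ itself: $d(N,x)\le d(N,\gamma_i(l-\delta))+d(\gamma_i(l-\delta),x)$, together with the $N$-segment identity $d(N,\gamma_i(l_i))-d(N,\gamma_i(l-\delta))=d(\gamma_i(l-\delta),\gamma_i(l_i))$, gives
$$d(N,\gamma_i(l_i))-d(N,x)\ge d(\gamma_i(l-\delta),\gamma_i(l_i))-d(\gamma_i(l-\delta),x).$$
The same Taylor argument applied to $k_i(t):=d(\gamma_i(l-\delta),\sigma_i(t))$ at $t=0$, using the fact that $\gamma_i|_{[l-\delta,l_i]}$ is the minimal geodesic from $\gamma_i(l-\delta)$ to $\gamma_i(l_i)$ with endpoint tangent $\dot\gamma_i(l_i)\to w_\infty$, yields
$$\liminf_{i\to\infty}\frac{d(N,\gamma_i(l_i))-d(N,x)}{d_i}\ge g_{w_\infty}(-v^b,w_\infty).$$
Setting $\beta=\gamma$ in the upper bound and combining with this lower bound proves \eqref{backward limit}; combining the general upper bound (arbitrary $\beta\in\Gamma_N(x)$) with this same lower bound yields the minimum characterization.

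The main obstacle I expect is purely bookkeeping of the asymmetric Finsler distance: one must ensure \eqref{first variation} is applied with the correct orientation (the moving endpoint being $\sigma_i(0)=\gamma_i(l_i)$, not its terminal endpoint $x$), so that the variational direction reads as $\dot\sigma_i(0)$, and that the sign flip from replacing $h_i(d_i)-h_i(0)$ by $h_i(0)-h_i(d_i)$ correctly converts $v^b$ into $-v^b$ in the final statements. Once signs and basepoints are aligned, uniformity of the Taylor remainders follows from the fact that the relevant points remain in a fixed strongly convex ball around $x$, and the rest is a direct transcription of the forward argument.
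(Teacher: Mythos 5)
Your proposal is correct and follows exactly the route the paper intends: the paper's own proof is a one-line reference to Proposition \ref{prop2.1}, instructing the reader to Taylor-expand $d(\gamma_i(l-\delta),\sigma_i(t))$ and $d(\beta(l-\delta),\sigma_i(t))$ along the reversed connecting geodesic $\sigma_i$ from $\gamma_i(l_i)$ to $x$, which is precisely what you carry out (your parametrization on $[0,d_i]$ versus the paper's on $[-d_i,0]$ is immaterial). Your sign bookkeeping, the identification of the limit of $\dot\sigma_i(0)$ with $v^b$, and the deduction of the minimum characterization from the upper bound over arbitrary $\beta\in\Gamma_N(x)$ are all consistent with the forward version's argument.
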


\begin{proof} The proof is similar to that of Proposition \ref{prop2.1}, if we apply the Taylor expansion for the functions $d(\gamma_i(l-\delta),\sigma_i(t))$ and $d(\beta(l-\delta),\sigma_i(t)).$
 Here $\sigma_i: [-d(\gamma_i(l_i),x),0]\to M$ denotes the minimal geodesic segment emanating from $\gamma_i(l_i)$ to $x$.
$\qedd$ 
\end{proof}

The following theorem gives a necessary and sufficient condition for
the distance function from a closed subset to be differentiable at  a point.
 This theorem corresponds to the theorem of the differentiability of a Busemann function (see \cite{KTI}).

\begin{theorem}\label{th2.3}
Let $N$ be a closed subset of a forward complete n-dimensional Finsler manifold $M.$
Then, the distance function $d_N( \cdot):=d(N, \cdot)$ is differentiable at  a point  $q\in M\setminus N$
if and only if there exists a unique $N$-segment to $q.$
Furthermore,
the differential $(d d_N)_q$ of $d_N$ at a differentiable point $q\in M\setminus N$ satisfies that 
$$(d d_N)_q(v) =g_X(X,v)$$
for any $v\in T_q M.$ Here $X$ denotes the velocity vector at $q$ of the unique $N$-segment to $q.$
\end{theorem}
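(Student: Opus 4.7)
\textit{Plan.} My plan is to compute the one-sided directional derivatives of $d_N$ at $q$ along smooth curves by applying the forward (Proposition~\ref{prop2.1}) and backward (Proposition~\ref{prop2.2}) first variation formulas, and then to read off differentiability from when the two sides agree.

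Fix a nonzero $v\in T_qM$ and a smooth curve $c:(-\ve,\ve)\to M$ with $c(0)=q$ and $\dot c(0)=v$. For $t_i\to 0^+$ put $q_i:=c(t_i)$ and let $\gamma_i:[0,l_i]\to M$ be any $N$-segment to $q_i$. Since $l_i=d_N(q_i)\to d_N(q)$, compactness of the unit sphere bundle over a neighborhood of $q$ together with continuity of $d_N$ allows passage to a subsequence along which $\gamma_i$ converges to some $\alpha_\infty\in\Gamma_N(q)$ with endpoint velocity $w_\infty$. A Taylor expansion of $c$ gives $v^f=v/F(v)$ in \eqref{limit formula forward}, so \eqref{eq2-2} and \eqref{eq2-3}, combined with $d(q,c(t))/t\to F(v)$ and bilinearity of $g$, yield the right directional derivative
\begin{equation*}
\lim_{t\to 0^+}\frac{d_N(c(t))-d_N(q)}{t}=\min_{\alpha\in\Gamma_N(q)}g_{\dot\alpha}(v,\dot\alpha),
\end{equation*}
where $\dot\alpha$ denotes the endpoint velocity; the $\min$ on the right appears via \eqref{eq2-2} and forces independence of the chosen subsequence. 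A parallel argument with Proposition~\ref{prop2.2}, using $v^b=v/F(v)$ and $d(c(t),q)/t\to -F(v)$ as $t\to 0^-$, produces the left directional derivative
\begin{equation*}
\lim_{t\to 0^-}\frac{d_N(c(t))-d_N(q)}{t}=\max_{\alpha\in\Gamma_N(q)}g_{\dot\alpha}(v,\dot\alpha),
\end{equation*}
the $\max$ arising from the minus sign on $v^b$ in \eqref{backward limit} combined with the sign flip of $d(c(t),q)/t$.

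Because $d_N$ is locally Lipschitz (by the triangle inequality), differentiability at $q$ is equivalent to the $\min$ and $\max$ above agreeing for every $v$, which holds iff the linear functional $v\mapsto g_{\dot\alpha}(v,\dot\alpha)$ does not depend on $\alpha\in\Gamma_N(q)$. Uniqueness of the $N$-segment $\gamma$ with endpoint velocity $X$ makes this automatic and delivers $(dd_N)_q(v)=g_X(X,v)$. Conversely, two distinct endpoint velocities $w_1\neq w_2$ produce distinct functionals: the Finslerian Cauchy--Schwarz inequality $g_y(u,y)\leq F(u)F(y)$---strict unless $u$ is a non-negative multiple of $y$---gives $g_{w_2}(w_1,w_2)<1=g_{w_1}(w_1,w_1)$, so the $\min$ and $\max$ disagree at $v=w_1$. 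The main obstacle is the coordinated use of both first variation formulas, forced by the asymmetry of the Finsler distance, which prevents the single-formula treatment that suffices in the Riemannian case: along the way one must verify that subsequential limits of $N$-segments to $q_i$ are again $N$-segments to $q$ (so $w_\infty$ lies in the velocity set of $\Gamma_N(q)$), that the sign conventions of $d(q,c(t))/t$ versus $d(c(t),q)/t$ pair correctly with the $\min$/$\max$ structure in \eqref{eq2-2} and its backward analog, and that strict Cauchy--Schwarz in Finsler geometry---resting on the strict convexity of $F$---genuinely separates the candidate functionals.
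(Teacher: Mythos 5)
Your proposal is correct and follows essentially the same route as the paper: one-sided difference quotients of $d_N$ are computed via the forward and backward generalized first variation formulas (Propositions \ref{prop2.1} and \ref{prop2.2}), and the strict fundamental inequality $g_{w_2}(w_1,w_2)<1$ for $w_1\ne w_2$ (Lemma 1.2.3 in \cite{S}) is what separates distinct $N$-segments; the paper's necessity argument is a slightly more economical variant of yours, testing only the single direction $v=\dot\alpha(l)$ along the geodesic extension of an $N$-segment $\alpha$ past $q$, where the left derivative equals $1$ for free because $d_N(\alpha(t))=t$, so that only Proposition \ref{prop2.1} is needed. The one step you assert rather than prove is that local Lipschitzness upgrades ``all two-sided directional derivatives exist and are linear in $v$'' to genuine differentiability; this is exactly the content of the paper's Lemma \ref{lem2.4} and does require the compactness-of-the-indicatrix argument given there, so you should either cite that lemma or reproduce its proof.
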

\begin{proof} 
Suppose that a point $q\in M\setminus N$ admits a unique $N$-segment $\alpha:[0,l]\to M.$
Let $v$ be any tangent vector with $F(v)=1.$
We obtain, by Proposition \ref{prop2.1}, 
$$\lim_{t\searrow 0}\frac{(d_N\circ\exp_q)(tv)-( d_N\circ\exp_q)(O_q)}{t}=g_{\dot\alpha(l)}(\dot\alpha(l),v).
$$
Hence, by Lemma \ref{lem2.4}, $d_N\circ\exp_q$ is differentiable at the zero vector $O_q.$
This implies that $d_N$ is differentiable at $q,$ since $(d\exp_q)_{O_q}$ is the identity map on the tangent space $T_q M$ at $q.$ 
Suppose next that $d_N$ is differentiable at a point $q\in M\setminus N$, and let $\alpha:[0,l]\to M$ be  a unit speed $N$-segment to  $q.$
It is clear that
\begin{equation*}
\lim_{t\nearrow l}\frac{d_N(\alpha(t))-d_N(q)}{t-l}=1.
\end{equation*}
Since $d_N(\alpha(t))$ is differentiable at $t=0,$
we obtain
\begin{equation}\label{eq2-17}
\lim_{t\searrow l}\frac{d_N(\alpha(t))-d_N(q)}{t-l}=1.
\end{equation}
Choose a decreasing sequence $\{t_i\}$ convergent to $l$ in such a way that
the sequence of $N$-segments to $\alpha(t_i)$ has a unique limit $N$-segment $\beta.$
Here the $N$-segment $\alpha$ is assumed to be extended as the geodesic on $[0,\infty).$
From Proposition \ref{prop2.1} and (\ref{eq2-17})  it follows that 
$$1=\lim_{i\to\infty}\frac{d_N(\alpha(t_i))-d_N(q)}{t_i-l}
=g_{\dot\beta(l)}(\dot\beta(l),\dot\alpha(l))$$
and 
$$g_{\dot\beta(l)}(\dot\beta(l),\dot\alpha(l))=\min\{g_{\dot\gamma(l)}(\dot\gamma(l),\dot\alpha(l)) \: | \:  \gamma\in \Gamma_N(q)\}.$$
Hence,
$g_{\dot\gamma(l)}(\dot\gamma(l),\dot\alpha(l))\geq 1,$ 
for any $\gamma\in \Gamma_N(q).$ 
Therefore, by Lemma 1.2.3 in \cite{S}, $\dot\gamma(l)=\dot\alpha(l)$ for any $\gamma\in\Gamma_N(q),$ and $q$ admits a unique $N$-segment.
$\qedd$
\end{proof}
\begin{lemma}\label{lem2.4}
Let $f: U\to R$ be a Lipschitz function on a  open convex subset around the zero vector $O$ of a Minkowski space $(V,F)$ with a Minkowski norm $F.$
Suppose that there exists a linear function $\omega : V\to R$ such that
for each ${e}\in F^{-1}(1),$ 
\begin{equation}\label{eq2-18}
\lim_{\lambda\searrow 0}\frac{f(\lambda  e) -f(O)-\omega(\lambda{ e})}{\lambda}=0. 
\end{equation}
Then,
\begin{equation}\label{eq2-19}
\lim_{F(v)\to 0}\frac{f(v)-f(O)-\omega(v)}{F(v)}=0, 
\end{equation}
i.e., $f$ is differentiable at the zero vector $O,$ and 
its differential at $O$ is $\omega.$
\end{lemma}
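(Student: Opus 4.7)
The plan is to set $g(v) := f(v) - f(O) - \omega(v)$ and reduce the statement to showing $\lim_{F(v)\to 0} g(v)/F(v) = 0$. Observe that $g$ inherits the Lipschitz property from $f$ (since $\omega$, being a linear functional on a finite-dimensional space, is Lipschitz with respect to any Minkowski norm on $V$), and that the hypothesis \eqref{eq2-18} can be rewritten as: for every $e \in F^{-1}(1)$, $\lim_{\lambda\searrow 0} g(\lambda e)/\lambda = 0$. Thus the goal becomes to upgrade this pointwise (in direction) limit to a uniform one over the indicatrix.

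The first step is to record that the indicatrix $S := F^{-1}(1)$ is compact. Indeed, $F$ is continuous and positive on the unit sphere of any auxiliary Euclidean norm $\|\cdot\|_E$ on $V$, so by homogeneity there are constants $0<c\leq C$ with $c\|v\|_E \leq F(v) \leq C\|v\|_E$; in particular $S$ is closed and bounded in $(V,\|\cdot\|_E)$, hence compact, and moreover $F(v)\to 0$ is equivalent to $\|v\|_E\to 0$. Let $L$ be a Lipschitz constant for $g$ with respect to $F$ on a fixed ball around $O$; such an $L$ exists because $f$ is Lipschitz and $\omega$ is bounded by a multiple of $F$.

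Next I would execute the uniformization. Fix $\varepsilon>0$. Choose $\eta>0$ with $L\eta<\varepsilon/2$ and cover the compact set $S$ by finitely many $F$-balls $\{e\in S : F(e-e_j)<\eta\}$, $j=1,\ldots,N$, with centers $e_j\in S$. For each $j$, use the hypothesis to pick $\lambda_j>0$ so that $|g(\lambda e_j)|/\lambda<\varepsilon/2$ for $0<\lambda<\lambda_j$, and set $\lambda_0:=\min_j\lambda_j>0$. Then for any $0<\lambda<\lambda_0$ and any $e\in S$ we pick $e_j$ with $F(e-e_j)<\eta$ and estimate
\begin{equation*}
\frac{|g(\lambda e)|}{\lambda} \leq \frac{|g(\lambda e_j)|}{\lambda} + \frac{|g(\lambda e)-g(\lambda e_j)|}{\lambda} < \frac{\varepsilon}{2} + \frac{L\,F(\lambda e-\lambda e_j)}{\lambda} < \frac{\varepsilon}{2} + L\eta < \varepsilon.
\end{equation*}
Finally, any nonzero $v$ with $F(v)<\lambda_0$ can be written as $v = F(v)\,e$ with $e := v/F(v)\in S$, so the displayed bound gives $|g(v)|/F(v)<\varepsilon$. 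This proves \eqref{eq2-19}, and the identification $(df)_O = \omega$ follows.

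The only subtlety, and hence the main point to be careful about, is that $F$ may be asymmetric, so one cannot simply invoke a symmetric norm structure; but the equivalence $c\|\cdot\|_E\leq F\leq C\|\cdot\|_E$ handles this, and the Lipschitz estimate above uses only the forward triangle inequality through $F(\lambda e - \lambda e_j) = \lambda F(e-e_j)$, which holds by positive homogeneity. No use is made of $F(-v)=F(v)$.
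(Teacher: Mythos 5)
Your proof is correct and follows essentially the same route as the paper's: compactness of the indicatrix gives a finite set of directions where the radial limit can be made uniformly small, and the Lipschitz property transfers the estimate to all nearby directions. The only cosmetic difference is that you absorb $\omega$ into $g=f-f(O)-\omega$ so a single Lipschitz constant handles both error terms, whereas the paper keeps the $|\omega(e-e_i)|$ term separate; your version also makes the uniform choice of $\lambda_0$ slightly more explicit, but the argument is the same.
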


\begin{proof}
Choose any positive number $\epsilon$ and fix it.
Since $F^{-1}(1)$ is compact, we may choose finitely many elements $ e_1,\dots, e_k$ of $F^{-1}(1)$ in such a way that
for any $e\in F^{-1}(1),$ there exists some  $ e_i $ satisfying 
\begin{equation}\label{eq2-20}
F({ e-e_i})<\frac{\epsilon}{3L} \quad {\rm and}\quad |\omega({ e-e_i})|<\epsilon/3.
\end{equation}
 Here $L$ denotes a Lipschitz constant of the function $f.$
Let $v$ be any non-zero vector of $U$ and choose any $ e_i.$
By the triangle inequality,
\begin{equation}\label{eq2-21}
|f(v)-f(O)-\omega(v)|\leq |f(v)-f(\lambda { e_i})|+|f(\lambda e_i)-f(O)-\omega(\lambda e_i)|
+|\omega(v)-\omega(\lambda e_i )|,
\end{equation}
where $\lambda:=F(v).$
Since $f$ is a Lipschitz function with a Lipschitz constant $L,$
\begin{equation*}
|f(v)-f(\lambda e_i)| \leq  L F(v-\lambda {e_i})=\lambda  L F(e(v)-{e_i}),
\end{equation*}
where $ e(v):=\frac{v}{\lambda}\in F^{-1}(1).$
Hence, by means of (\ref{eq2-18})  and  (\ref{eq2-21}), we get
\begin{equation*}
\limsup_{F(v)\to 0}\frac{|f(v)-f(O)-\omega(v)|} {F(v)}\leq L\limsup_{\lambda\searrow 0}
| F( e(v)-{ e_i})  |+   \limsup_{\lambda\searrow 0}| \omega( e(v)-{e_i} ) |.
\end{equation*}
Here, by choosing $e_i$ so as to satisfy \eqref{eq2-20}, we get
$$\limsup_{F(v)\to 0}\frac{|f(v)-f(O)-\omega(v)|}{F(v)}\leq\epsilon.$$
Since $\epsilon$ is arbitrarily chosen, relation (\ref{eq2-19}) follows.

$\qedd$
\end{proof}                                                                                                                                                          

Let us recall that the cut locus $C_N$ is the set of all endpoints of non-constant maximal $N$-segments and that each element of $C_N$ is called a cut point of $N.$ 

The following proposition was proved by Bishop (\cite{Bh}) for the cut locus of a point in a Riemannian manifold. However, the proof of the following proposition is
direct, and hence 
 completely different from the one by Bishop.

\begin{proposition}\label{prop2.5}
Let $N$ be a closed subset of a forward complete $n$-dimensional Finsler manifold $M.$
Then the subset of $C_N${, which} consists of all cut points of $N$ admitting  (at least) two $N$-segments, is dense in the cut locus of $N$.
\end{proposition}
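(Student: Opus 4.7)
The plan is to argue by contradiction. Suppose the conclusion fails; then there exist a cut point $q_0\in C_N$ and an open neighborhood $U$ of $q_0$, with $U\subset M\setminus N$, such that every cut point in $U\cap C_N$ admits a unique $N$-segment. My strategy is to show that $d_N$ must then be of class $C^1$ on $U$, and to use an integral curve of the Finslerian gradient at $q_0$ to extend the maximal $N$-segment ending at $q_0$, contradicting its maximality.

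First I would verify that every point of $U$ admits a unique $N$-segment. For $p\in U\cap C_N$ this is the standing hypothesis. For $p\in U\setminus C_N$, the $N$-segments to $p$ extend past $p$ as $N$-segments; if two distinct such $N$-segments $\alpha$ and $\beta$ to $p$ existed, the broken curve obtained by following $\beta$ up to $p$ and then an $N$-segment extension of $\alpha$ past $p$ would be a minimizer from $N$ with a corner at $p$, which the first variation formula \eqref{arbitrary first variation} would allow us to shorten near the corner, a contradiction. Hence by Theorem \ref{th2.3}, $d_N$ is differentiable on $U$ with $(dd_N)_p(v)=g_{X_p}(X_p,v)$, where $X_p$ is the final unit velocity of the unique $N$-segment to $p$.

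Next I would show that the vector field $p\mapsto X_p$ is continuous on $U$, so that $d_N\in C^1(U)$. If $p_n\to p$ in $U$, then by forward completeness and the convergence $d_N(p_n)\to d_N(p)$, a subsequence of the unique $N$-segments to $p_n$ converges to an $N$-segment at $p$; uniqueness at $p$ forces the limit to equal the unique $N$-segment there, so $X_{p_n}\to X_p$.

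Let $\gamma:[0,l]\to M$ be the unique $N$-segment ending at $q_0=\gamma(l)$. By the Peano existence theorem applied in a coordinate chart, there is an integral curve $c:[0,\varepsilon)\to U$ of the continuous vector field $X$ with $c(0)=q_0$. A direct computation, using $F(X_{c(\tau)})=1$, gives
\[
\frac{d}{d\tau}d_N(c(\tau))=(dd_N)_{c(\tau)}(X_{c(\tau)})=g_{X_{c(\tau)}}(X_{c(\tau)},X_{c(\tau)})=1,
\]
so $d_N(c(\tau))=l+\tau$, while the unit Finslerian speed $F(\dot c(\tau))=1$ shows that the length of $c|_{[0,\tau]}$ equals $\tau$. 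Since $d_N$ is $1$-Lipschitz, $d(q_0,c(\tau))\geq\tau$, so $c|_{[0,\tau]}$ is a unit-speed minimizer from $q_0$ to $c(\tau)$ and hence a smooth unit-speed geodesic. Its initial velocity $\dot c(0)=X_{q_0}=\dot\gamma(l)$ coincides with that of the extension of $\gamma$, so by uniqueness of Finsler geodesics $c(\tau)=\gamma(l+\tau)$ for all small $\tau\geq 0$. Therefore $d_N(\gamma(l+\tau))=l+\tau$, contradicting the maximality of $\gamma$ as an $N$-segment. The main obstacle, I expect, is to secure the continuity of $X$ on all of $U$, as it requires the uniqueness of $N$-segments at every point of $U$ together with a compactness argument supplied by forward completeness.
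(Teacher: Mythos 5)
Your proof is correct, and its first half (arguing by contradiction, noting that every point of the neighborhood $U$ — not just the cut points — admits a unique $N$-segment, and invoking Theorem \ref{th2.3} to get differentiability of $d_N$ on $U$) is exactly the paper's opening; in fact you supply two details the paper leaves implicit, namely the corner-smoothing argument showing that a point with two $N$-segments must be a cut point, and the continuity of the unit vector field $X_p$ needed to upgrade ``differentiable'' to $C^1$. Where you genuinely diverge is the endgame. The paper applies the inverse function theorem to $d_N$ to produce product coordinates $\varphi(t,q)$ with $d_N\circ\varphi=t$, considers the family of $N$-segments $\gamma_x$ of length $l+\delta_2$ through the level slice $\{l\}\times U_\epsilon$, observes that their union is a neighborhood of $p$ for every $\epsilon$, and lets $\epsilon\to 0$ so that the family shrinks to a limit $N$-segment of length $l+\delta_2$ passing through $p$ at the interior parameter $l$ — contradicting that $p$ is a cut point. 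You instead integrate the gradient field $X$ from $q_0$ via Peano, compute $(d_N\circ c)'=g_X(X,X)=1$ to see that $c$ is a calibrated curve, conclude it is a minimizing geodesic whose initial velocity matches $\dot\gamma(l)$, and hence that $\gamma$ extends as an $N$-segment past $q_0$, contradicting maximality. Your route trades the inverse function theorem and the limit of a shrinking family of $N$-segments for an ODE existence theorem plus the standard ``length equals distance implies geodesic'' characterization; it is arguably cleaner in that it avoids the paper's tacit assumption that each $\gamma_x$ is actually defined up to parameter $l+\delta_2$, at the modest cost of having to justify the chain rule for $d_N\circ c$ and the identification of $c$ with the geodesic extension of $\gamma$, both of which you do.
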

\begin{proof}
Let $p$ be  a cut point of $N$ {which} admits a unique $N$-segment. Suppose that there exists an open ball $B_{\delta_1}(p)$ each element of which admits a unique $N$-segment. From Theorem \ref{th2.3} {it follows that} the distance function $d_N|B_{\delta_1}(p)$ is a $C^1$-function and has no critical points. 
Hence, by the inverse function theorem, there exists a $C^1$-diffeomorphism $\varphi : (l-2\delta_2,l+2\delta_2)\times U_{\delta_3}\to \varphi( (l-2\delta_2,l+2\delta_2)\times U_{\delta_3} )\subset B_{\delta_1}(p)$
such that 
$\varphi(l,{O})=p,$ and $d(N,\varphi(t,q))=t$ on $(l-2\delta_2,l+2\delta_2)\times U_{\delta_3}.$
Here $l:=d(N,p),$ and $U_{\delta_3}$ denotes the open ball of radius $\delta_3$ centered at the origin $O$  in $R^{n-1}.$
For each point $x\in U_{\delta_3},$ let $\gamma_x : [0,l+\delta_2]\to M$ denote the $N$-segment passing through $\varphi(l,x).$ Since the set
$\bigcup_{ x\in U_\epsilon} \gamma_x[0,l+\delta_2]    $ is a neighborhood of $p$ for each $\epsilon\in(0,\delta_3),$  the  family $\{ \gamma_x|_{[0,l+\delta_2]}\} $ of  $N$-segments shrinks to
an  $N$-segment $\gamma: [0,l+\delta_2]\to M$ passing through $p$ when $\epsilon$ goes to zero.   This is  a contradiction. 

$\qedd$
\end{proof}

It is known that the cut locus of  a point in a complete Finsler manifold is closed (see for example \cite{BCS}). The following example shows that {in general} the cut locus of a closed subset in Euclidean plane is not closed.

\begin{example}\label{Ex2.6}
Choose any strictly decreasing sequence $\{\theta_n \}$ with $\theta_1\in(0,\pi)$ which is convergent to zero. Let $D$ denote  the closed  ball  with radius 1 centered at the origin of Euclidean plane $E^2$ {endowed with the standard Euclidean norm} and {let} $B_n$ {be} the open ball with radius 1 cantered at $q_n$, for each $n=1,2,3,\dots$
Here
 {$q_n \notin D$}
denotes the center of the circle with radius 1 passing through 
two points $(\cos\theta_n,\sin\theta_n)$ and  $(\cos\theta_{n+1},\sin\theta_{n+1}).$
 A closed subset $N$ of Euclidean plane is defined by
$$N:=D\setminus{\bigcup_{n=1}^\infty B_n}.$$
It is trivial {to see} that the sequence $\{q_n\}$ of cut points of $N$ converges to the point $(x,y)=(2,0).$
On the other hand, the point $(x,y)=(2,0)$ lies on the $N$-segment $\{(x,0)\; | \: 1\leq x\leq 3\}.$
This implies that the cut locus of the set $N$ is not  closed in $E^2.$

\end{example}

\section{The cut locus is a local tree}\label{sec3}                            
  From now on $N$ denotes a closed subset of a forward  complete 2-dimensional Finsler manifold $(M,F).$                                                                        
For each point $p\in M\setminus N,$
we denote by $\Gamma_N(p)$ the set of all unit speed $N$-segments to $p$,
 and by $B_{\delta}(q)$ the forward ball 
\begin{equation*}
B_{\delta}(q):=\{r\in M | d(q,r)<\delta   \},
\end{equation*}
centered at a point $q\in M$ and of radius $\delta.$

Let $x$ be a cut point of $N.$ Choose a small $\delta_0>0$ (to be fixed)
in such a way that  $B_{4\delta_0}(x)$ is a strongly convex neighborhood at $x$. For any 
$y\in C_N\cap B_{\delta_0}(x)$, each
connected component of
\begin{equation*}
B_{3\delta_0}(x)\setminus\{\gamma [0,d(N,y)]|\gamma\in\Gamma_N(y)\}
\end{equation*}
is called a {\it sector} at $y$. 

Choose any distinct two cut points  $y_0$ and $y_1$ of $N$ from $ B_{\delta_0}(x).$  
One can easily see that any                                                    
$\gamma\in \Gamma_N(y_0)$ does not pass through $y_1.$ Hence there exists a unique sector $\Sigma_{y_0}(y_1)$ at $y_0$ containing $y_1$. 
Let $\Sigma_{y_1}(y_0)$ denote the sector at $y_1$ containing $y_0$.
Since each $N$-segment to a point
in $B_{\delta_0}(x)$ intersects
$S_{2\delta_0}(x):=\{q\in M | d(x,q)=2\delta_0\}$
exactly once, 
the set
$$W(y_0,y_1):=\Sigma_{y_0}(y_1)\cap\Sigma_{y_1}(y_0)\cap B_{2\delta_0}(x)$$
is a 2-disc domain.
Furthermore, 
there exist exactly two open subarcs $I$ and $J$ of $S_{2\delta_0}(x)$ cut off by $N$-segments in $\Gamma_N(y_0)$ or  $\Gamma_N(y_1)$.
 If $\Gamma_N(y_0)$ or $\Gamma_N(y_1)$ consists of a single $N$-segment, then $I$ and $J$ have a common end point. 
Notice that for each point $r\in W(y_0,y_1)$, any 
$N$-segment to $r$ meets  $I$ or $J.$
Let $W_I(y_0,y_1)$ (respectively $W_J(y_0,y_1)$) denote the set of all points $r$ in $W(y_0,y_1)$ 
which admit an $N$-segment intersecting $I$ (respectively J).

\begin{lemma}\label{lem3.1}
Neither of $W_I(y_0,y_1)$ nor $W_J(y_0,y_1)$ is empty. Moreover,  if $y_0$ and $y_1$ are sufficiently close each other, then
 $ W_I(y_0,y_1)\cap W_J(y_0,y_1)$ is a subset of $B_{\delta_0}(x).$
\end{lemma}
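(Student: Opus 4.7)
I treat the two assertions separately: for the nonemptiness I plan to use the first variation formula (Proposition 2.1) near the vertex $y_0$ of $\partial W$; for the inclusion I plan to argue by contradiction and extract subsequential limits as $y_0$ and $y_1$ coalesce.

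For nonemptiness of $W_I$, let $l_0 := d(N, y_0)$ and let $\gamma_0^I$ denote the $N$-segment to $y_0$ whose $S_{2\delta_0}(x)$-crossing $a_0$ is an endpoint of $I$. Pick a tangent vector $v \in T_{y_0}M$ whose direction lies in the open sector $\Sigma_{y_0}(y_1)$ and is chosen arbitrarily close to that of $-\dot\gamma_0^I(l_0)$, and set $p_\epsilon := \exp_{y_0}(\epsilon v)$. For small $\epsilon > 0$, $p_\epsilon$ lies in $\Sigma_{y_0}(y_1) \cap \Sigma_{y_1}(y_0) \cap B_{2\delta_0}(x) = W(y_0, y_1)$, since $\Sigma_{y_1}(y_0)$ contains a whole neighborhood of $y_0$. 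Let $\sigma_\epsilon$ be any $N$-segment to $p_\epsilon$ and let $\sigma^*$ be a subsequential limit as $\epsilon \to 0$; this $\sigma^*$ is an $N$-segment to $y_0$, and by Proposition 2.1 its velocity at $y_0$ minimizes $w \mapsto g_w(v, w)$ over the set $\{\dot\gamma(l_0) : \gamma \in \Gamma_N(y_0)\}$. The Finslerian inequality $g_w(\dot\gamma_0^I, w) \leq F(\dot\gamma_0^I)\,F(w)$, with equality iff $w$ is positively parallel to $\dot\gamma_0^I$, identifies this unique minimizer as $\dot\gamma_0^I(l_0)$ whenever $v$ is sufficiently close to the direction of $-\dot\gamma_0^I(l_0)$; hence $\sigma^* = \gamma_0^I|_{[0, l_0]}$. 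Consequently the $S_{2\delta_0}(x)$-crossings of $\sigma_\epsilon$ tend to $a_0$, and since $p_\epsilon$ sits on the $W$-side of $\gamma_0^I$, these crossings lie in the adjacent arc $I$, giving $p_\epsilon \in W_I$. The symmetric construction with $v$ close to the direction of $-\dot\gamma_0^J(l_0)$ gives $W_J \neq \emptyset$.

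For the inclusion $W_I \cap W_J \subset B_{\delta_0}(x)$ I argue by contradiction. Assume sequences $y_0^{(n)}, y_1^{(n)} \in C_N \cap B_{\delta_0}(x)$ with $d(y_0^{(n)}, y_1^{(n)}) \to 0$ and $r^{(n)} \in W_I(y_0^{(n)}, y_1^{(n)}) \cap W_J(y_0^{(n)}, y_1^{(n)}) \setminus B_{\delta_0}(x)$. Passing to subsequences, $y_0^{(n)}, y_1^{(n)} \to y^* \in \overline{B_{\delta_0}(x)}$, $r^{(n)} \to r^* \in \overline{B_{2\delta_0}(x)} \setminus B_{\delta_0}(x)$, and the normalized displacement from $y_0^{(n)}$ to $y_1^{(n)}$ converges. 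Let $\sigma^I_n, \sigma^J_n$ be $N$-segments to $r^{(n)}$ with $S_{2\delta_0}(x)$-crossings $z^I_n \in I_n, z^J_n \in J_n$; subsequential limits $\sigma^{I,*}, \sigma^{J,*}$ are $N$-segments to $r^*$, with crossings $z^{I,*}, z^{J,*}$. The boundary $N$-segments of the sectors converge to $N$-segments $\alpha^I, \alpha^J \in \Gamma_N(y^*)$ framing the limit sector. In the non-degenerate case $\alpha^I \neq \alpha^J$, the limit crossings are the distinct hitting points $a^*, b^*$ of $\alpha^I, \alpha^J$, so by uniqueness of the $N$-segment to $a^*$ in the strongly convex ball $B_{4\delta_0}(x)$, $\sigma^{I,*}$ coincides with $\alpha^I$ up to $a^*$ and must continue along the same geodesic past $y^*$ to $r^*$. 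But since $y^*$ carries both $\alpha^I$ and $\alpha^J$, a standard corner-smoothing argument (shortening the path from $N$ along $\alpha^J$ to a point near $y^*$ and then geodesically to the extension point) shows $\alpha^I$ is maximal at $y^*$, so its extension past $y^*$ is not an $N$-segment; this contradicts $\sigma^{I,*}$ being an $N$-segment unless $r^* = y^*$, which combined with $r^* \notin B_{\delta_0}(x)$ forces $r^* = y^* \in S_{\delta_0}(x)$.

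\textbf{Main obstacle.} The principal difficulty is the degenerate case $\alpha^I = \alpha^J$ (when the limit sector collapses, e.g., if $y^*$ has a unique $N$-segment in the limit direction) together with the borderline $r^* = y^* \in S_{\delta_0}(x)$. In the degenerate case $\sigma^{I,*}$ and $\sigma^{J,*}$ merge, so the maximality contradiction is unavailable; I plan to recover a contradiction by a quantitative refinement of Proposition 2.1, bounding the $g$-angular separation of $\sigma^I_n$ and $\sigma^J_n$ at $r^{(n)}$ by the shrinking arc-length of $I_n \cup J_n$ on $S_{2\delta_0}(x)$, and then arguing that such a vanishing separation is incompatible with $r^{(n)}$ remaining outside $B_{\delta_0}(x)$. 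The borderline case is expected to be controllable by a finer analysis of how $r^{(n)}$ sits in the interior of $W(y_0^{(n)}, y_1^{(n)})$ as it converges, but both situations require the most delicate bookkeeping in the proof.
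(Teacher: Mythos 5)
Your proposal has genuine gaps in both halves, and they occur exactly where the paper's proof does its real work. For the nonemptiness claim, two problems. First, since $v$ must lie strictly inside the sector, the limit direction $v^f$ is only \emph{close} to the normalized $-\dot\gamma_I(l_0)$, and the fundamental inequality gives uniqueness of the minimizer of $w\mapsto g_w(v^f,w)$ only at exact alignment; if $y_0$ carries further $N$-segments whose velocities accumulate at $\dot\gamma_I(l_0)$ from outside the sector, the limit $\sigma^*$ need not be $\gamma_I$. This is repairable by compactness (minimizers over the set of $N$-segment velocities converge to $\dot\gamma_I(l_0)$, so the crossings still tend to the endpoint $a_0$ of $I$), but that only settles the case $a_0\notin\overline{J}$. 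Second, and decisively, when $\Gamma_N(y_0)$ consists of a single segment, i.e.\ $\gamma_I=\gamma_J$, the arcs $I$ and $J$ share the endpoint $a_0$, so ``the crossings tend to $a_0$'' does not place them in $I$; and the appeal to $p_\epsilon$ lying on the $I$-side of $\gamma_I$ does not help, because $y_0$ is a cut point (hence interior to no $N$-segment), the arc $\gamma_I\cap B_{2\delta_0}(x)$ ends at the interior point $y_0$, its complement in the disc is connected, and an $N$-segment to $p_\epsilon$ may enter through $J$ near $a_0$ and pass around $y_0$ to reach $p_\epsilon$ without ever meeting $\gamma_I$. This degenerate case is precisely where the paper's proof concentrates: it anchors the construction at an \emph{interior} point $\gamma_I(t_0)$ inside a small strongly convex ball, where $\gamma_I$ genuinely separates the ball into $D_I$ and $D_J$, and obtains the contradiction by forcing a minimal subarc of some $N$-segment (driven through the Jordan domain $D(\alpha_J,\beta)$) to cross $\gamma_I$ inside that ball, using only that limits of $N$-segments are $N$-segments, that an interior point of an $N$-segment admits a unique $N$-segment, and that two $N$-segments cannot meet at interior points.

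For the inclusion $W_I(y_0,y_1)\cap W_J(y_0,y_1)\subset B_{\delta_0}(x)$, your limiting argument breaks before the cases you flag: you assert that the crossings $z^I_n\in I_n$ converge to the hitting point $a^*$ of the limit segment $\alpha^I$, but the arcs $I_n$ need not shrink to points --- each $I_n$ is bounded by one crossing of a boundary $N$-segment at $y_0^{(n)}$ and one at $y_1^{(n)}$, and these may converge to two \emph{distinct} $N$-segments of $y^*$ --- so $\sigma^{I,*}$ cannot be identified with $\alpha^I$ and the maximality contradiction never gets started. Beyond that, you explicitly leave the collapsed case $\alpha^I=\alpha^J$ and the borderline case $r^*=y^*\in S_{\delta_0}(x)$ as plans for a ``quantitative refinement'' of Proposition \ref{prop2.1}, so the proposal is incomplete on its own terms. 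The paper handles this second claim by the same elementary separation and crossing scheme as the first (it is stated to follow by a similar argument), not by first-variation estimates; reworking both halves in that topological style, with the convex ball centered at an interior point of the boundary segment, is the natural way to close these gaps.
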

\begin{proof}
Let $\gamma_I$ and $\gamma_J$ denote the $N$-segments in $\Gamma_N(y_0)$ that form part of the boundary of $W(y_0,y_1).$ Here we assume that $\gamma_I$ (respectively $\gamma_J$)
intersects $S_{2\delta_0}(x)$ at an end point of $I$ (respectively $J$).
Notice that $\gamma_I=\gamma_J$ holds if and only if  $\Gamma_N(y_0)$ consists of a single element. Take  $t_0\in(0,d(N,y_0))$ so as to satisfy that
$\gamma_I(t_0)$ and $\gamma_J(t_0)$ are points in $B_{\delta_0}(x).$
Choose strongly convex neighborhoods $B_\epsilon(\gamma_I(t_0))$($\subset B_{\delta_0}(x)$) and  $B_\epsilon(\gamma_J(t_0))$($\subset B_{\delta_0}(x)$) in such a way that
$B_\epsilon(\gamma_I(t_0))\cap B_\epsilon(\gamma_J(t_0))=\emptyset$
if $\gamma_I\ne\gamma_J.$
It is clear that 
$$D_I:=W(y_0,y_1)\cap B_\epsilon(\gamma_I(t_0))\quad{\rm
and}\quad
D_J:=W(y_0,y_1)\cap B_\epsilon(\gamma_J(t_0))$$
are disjoint if $\gamma_I\ne\gamma_J.$

In the case {when} $\gamma_I=\gamma_J,$
$D_I$ and $D_J$ denote the two connected components of 
 $B_\epsilon(\gamma_I(t_0))\setminus \gamma_I[0,d(N,y_0) )].$
In this case, we may assume that for each $t\in I$ sufficiently close to the intersection of $\gamma_I$
and $S_{ {2\delta}_0}(x),$ the minimal geodesic segment from $t$ to $\gamma_I(t_0)$ intersects $D_I,$ but does not intersect $D_J.$  

Suppose that {$\gamma_{I}\neq \gamma_{J}$ and} $W_I(y_0,y_1)$ or $W_J(y_0,y_1)$ is empty.
Without loss of generality, we may assume that $W_I(y_0,y_1)=\emptyset.$
Choose a sequence $\{q_n\}$ of points in $D_I$ converging to 
$\gamma_I(t_0).$
Let $\alpha$ be  a limit $N$-segment of the sequence $\{\alpha_n\},$ 
where $\alpha_n\in\Gamma_N(q_n).$ 
Since we {have} assumed that  $W_I(y_0,y_1)$ is empty, for each $n,$ $\alpha_n$ intersects $J.$ 
The $N$-segment $\alpha$ intersects the closure $\overline J$ of $J.$
Hence 
$\gamma_I(t_0)$ admits two $N$-segments $\alpha$ and $\gamma_I|_{[0,t_0]}$
if $\gamma_I\ne\gamma_J,$ that is, a contradiction. Therefore, {we must have} 
$\gamma_I=\gamma_J.$ 

Choose any point $q_J$ from $D_J,$ and fix it. Let 
$\alpha_J:[0,d(N,q_J)]\to M$ be an element of $\Gamma_N(q_J),$ and $\beta$ the unique minimal geodesic segment joining from $q_J$ to 
$\gamma_I(t_0)=\gamma_J(t_0).$ 
Since we assumed that  $W_I(y_0,y_1)$ is empty, $\alpha_J$ intersects $S_{2\delta_0}(x)$ at a point of $J.$
Then the three geodesic segments $\alpha_J,\;\beta$ and $\gamma_J|_{[0,t_0]}$ bound a 2-disc domain $D(\alpha_J,\beta)$ together with the subarc $c$ of $\overline J$ cut off by $\alpha_J$ and $\gamma_I=\gamma_J.$
Since we assumed that $W_I(y_0,y_1)$ is empty, the $N$-segment $\alpha_n$ intersects $J$
for each $n$ and the sequence $\{\alpha_n\}$ converges to $\gamma_I|_{[0,t_0]}.$
Therefore, for any sufficiently large $n,$ $\alpha_n$ intersects $c,$ the subarc of $\overline J.$ 
Hence
$\alpha_n$ passes through the disc domain $D(\alpha_J,\beta),$ and intersects $\beta$ at a point $p_n\in D_J.$
The subarc $\gamma_n$ of $\alpha_n$    with end points $p_n$ and $q_n$ is minimal and both end points are in $B_\epsilon(\gamma_I(t_0)).$ Since 
$B_\epsilon(\gamma_I(t_0))$ is a strongly convex ball, the subarc is entirely contained in the ball and joins $p_n\in D_J$ to $q_n\in D_I.$ Hence $\gamma_n$ meets $\gamma_I$ at a point in $B_\epsilon(\gamma_I(t_0)).$ This is {again} a contradiction, since both $\alpha_n$ and $\gamma_I$ are $N$-segments. The second claim can be proved by a similar argument {as} above.
$\qedd$
\end{proof}

\begin{lemma}\label{lem3.2}
For each $x\in C_N$ and each sector $\Sigma_x$ at $x$, there exists a sequence
of points in
$\Sigma_x\cap C_N$ convergent to $x.$
\end{lemma}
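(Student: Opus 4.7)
The plan is to argue by contradiction: suppose there exists $\epsilon\in(0,\delta_0)$ such that $\Sigma_x\cap B_\epsilon(x)\cap C_N=\emptyset$, so every $y$ in this open set admits a unique $N$-segment $\sigma^y$, and $y\mapsto\sigma^y$ is continuous in the $C^0$-topology on $N$-segments. A preliminary observation used throughout: writing $l:=d(N,x)$, for each $\gamma\in\Gamma_N(x)$ and each $t\in(0,l)$, the point $\gamma(t)$ admits $\gamma|_{[0,t]}$ as its unique $N$-segment, since a second $N$-segment $\sigma$ to $\gamma(t)$ would produce the length-$l$ broken geodesic $\sigma\ast\gamma|_{[t,l]}$ from $N$ to $x$ with a corner at $\gamma(t)$, which the first variation formula \eqref{first variation} allows us to shorten, contradicting $d(N,x)=l$.

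Consider first the easy case $|\Gamma_N(x)|=1$. The unique sector is $\Sigma_x=B_{3\delta_0}(x)\setminus\gamma_1([0,l])$. By Proposition \ref{prop2.5}, there is a sequence $p_n\to x$ of cut points of $N$ each admitting at least two $N$-segments. The preliminary observation rules out $p_n\in\gamma_1((0,l))$, and $p_n\neq x$ for large $n$; hence $p_n\in\Sigma_x$ eventually, and the lemma follows without even invoking the contradiction hypothesis.

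Now suppose $|\Gamma_N(x)|\geq 2$ and let $\gamma_1,\gamma_2\in\Gamma_N(x)$ be the two adjacent $N$-segments bounding $\Sigma_x$. The midpoints $\gamma_1(l/2)$ and $\gamma_2(l/2)$ are distinct. Using compactness of $\Gamma_N(x)$ in the $C^0$-topology (Arzela--Ascoli for unit-speed geodesics of length $l$, whose limits remain $N$-segments), cover the compact set $\{\gamma(l/2):\gamma\in\Gamma_N(x)\}$ by finitely many mutually disjoint open subsets $W_1,\dots,W_k\subset M$ with $\gamma_i(l/2)\in W_i$ for $i=1,2$. A subsequential-limit argument shows that after shrinking $\epsilon$ the sets
\begin{equation*}
A_j:=\{y\in\Sigma_x\cap B_\epsilon(x):\sigma^y(d_N(y)/2)\in W_j\},\quad j=1,\dots,k,
\end{equation*}
are disjoint open sets partitioning $\Sigma_x\cap B_\epsilon(x)$. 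Choosing $\rho>0$ small enough that $\gamma_i((l-\rho)/2)\in W_i$ and $\gamma_i(l-\rho)\in B_\epsilon(x)$, I approach $\gamma_i(l-\rho)$ from inside $\Sigma_x$: the preliminary observation gives uniqueness of the $N$-segment at $\gamma_i(l-\rho)$, so $\sigma^y\to\gamma_i|_{[0,l-\rho]}$ and hence $\sigma^y(d_N(y)/2)\in W_i$ eventually; thus both $A_1$ and $A_2$ are nonempty. Since $\Sigma_x\cap B_\epsilon(x)$ is a connected open sector-region at $x$, such a partition into disjoint open sets with two nonempty members is impossible, yielding the required contradiction.

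The main subtlety is the uniform control of $\sigma^y$ as $y\to x$ when $\Gamma_N(x)$ may be infinite; compactness of $\Gamma_N(x)$ in $C^0$ together with the finite open cover $\{W_j\}$ is precisely what bridges this gap, and the midpoint evaluation $\sigma^y(d_N(y)/2)$ is chosen so that it separates distinct elements of $\Gamma_N(x)$ even when they share a common starting point on $N$.
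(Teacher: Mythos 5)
Your reduction to a contradiction and your Case 1 (via Proposition \ref{prop2.5}) are fine, but Case 2 contains a genuine gap at the very step that is supposed to handle the hard situation. You ask to ``cover the compact set $\{\gamma(l/2):\gamma\in\Gamma_N(x)\}$ by finitely many \emph{mutually disjoint} open subsets $W_1,\dots,W_k$ with $\gamma_i(l/2)\in W_i$ for $i=1,2$.'' Compactness of $\Gamma_N(x)$ only yields finite subcovers; it does not let you make the cover disjoint, and in fact such a disjoint cover cannot exist whenever $\gamma_1(l/2)$ and $\gamma_2(l/2)$ lie in the same connected component of the midpoint set (a connected set covered by disjoint open sets lies entirely in one of them). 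This situation really occurs: take $M=E^2$, $N$ a closed semicircular arc of radius $l$ centered at $x$. Then $x$ is a cut point, $\Gamma_N(x)$ is a continuum of radii, the sector $\Sigma_x$ is the opposite half-disc bounded by the two extreme radii $\gamma_1,\gamma_2$, and $\{\gamma(l/2):\gamma\in\Gamma_N(x)\}$ is a connected semicircle containing both $\gamma_1(l/2)$ and $\gamma_2(l/2)$. So the separation device collapses exactly in the case (infinitely many $N$-segments) it was introduced to control, and everything downstream ($A_1,\dots,A_k$ being a disjoint open partition) collapses with it. A repair is possible but requires a different input: one must show that $N$-segments to points of $\Sigma_x$ near $x$ cannot cross $\gamma_1\cup\gamma_2\cup\{x\}$, hence can only accumulate on the two boundary segments $\gamma_1,\gamma_2$, and then separate just those two; as written, the argument is incorrect. (A secondary unproved assertion is the connectedness of $\Sigma_x\cap B_\epsilon(x)$, which your final step also needs.)

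For comparison, the paper's proof avoids all of this machinery: assuming $\Sigma_x\cap B_\epsilon(x)$ contains no cut point, it takes an $N$-segment $\gamma$ to a point of $\Sigma_x\cap S_{\epsilon/2}(x)$ and observes that, since there is no cut point in the sector between radius $\delta$ and radius $\epsilon$, $\gamma$ must be a restriction of an $N$-segment to a point of $\Sigma_x\cap S_\delta(x)$ for every $\delta\in(0,\epsilon/2)$; letting $\delta\to 0$, $\gamma$ extends to an $N$-segment to $x$ lying inside $\Sigma_x$, contradicting the definition of a sector. No uniqueness, continuity, or separation of the (possibly uncountable) family $\Gamma_N(x)$ is needed.
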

\begin{proof}
Suppose that there exists no cut point of $N$ in $B_\epsilon(x)\cap\Sigma_x$
for some sufficiently small positive $\epsilon.$ Let $\gamma$ denote an $N$-segment to a point in $\Sigma_x\cap S_{\epsilon/2}(x).$ Take any $\delta\in(0,\epsilon/2).$
For each point $y\in\Sigma_x\cap S_{\delta}(x),$ there exists an $N$-segment $\gamma_y$ to $y$. We get  a family of $N$-segments $\{\gamma_y\}_{y\in\Sigma_{x}\cap S_{\delta}(x)}.$ Since there exists no cut point of $N$ in $B_\epsilon(x)\cap\Sigma_x,$ the $N$-segment $\gamma$ is a restriction of $\gamma_y$ for some $y\in\Sigma_x\cap S_{\delta}(x).$ Since $\delta$ is chosen arbitrarily small, $\gamma$ is extensible to an $N$-segment to $x,$ which lies in the sector $\Sigma_x.$ This contradicts the definition of the sector.
$\qedd$
\end{proof}

\begin{lemma}\label{lem3.3}
$
W_I(y_0,y_1)\cap W_J(y_0,y_1)\neq \emptyset.$
\end{lemma}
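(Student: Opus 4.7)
\emph{Proof plan.}
The plan is to argue by contradiction: suppose $W_I(y_0,y_1)\cap W_J(y_0,y_1)=\emptyset$. The strategy is to verify that, under this assumption, $W_I$ and $W_J$ are disjoint non-empty open subsets of $W(y_0,y_1)$ whose union is all of $W(y_0,y_1)$; since $W(y_0,y_1)$ is a connected 2-disc domain and Lemma \ref{lem3.1} already supplies the non-emptiness, this will contradict connectedness.

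First I would pin down the entry points of $N$-segments. Shrink $\delta_0$ if necessary so that $B_{4\delta_0}(x)\cap N=\emptyset$. Strong convexity then forces each $N$-segment $\alpha$ to any point $r\in W(y_0,y_1)$ to meet $S_{2\delta_0}(x)$ at a unique entry point $q$, and I claim $q\in I\cup J$. Tracing $\alpha$ backwards from the interior point $r$, it first exits $W(y_0,y_1)$ through $\partial W(y_0,y_1)$, which is composed of the arcs $\overline I,\overline J\subset S_{2\delta_0}(x)$ together with portions of the bounding $N$-segments in $\Gamma_N(y_0)\cup\Gamma_N(y_1)$. Exit through the interior of such a bounding segment $\gamma$ would create a point at which $\alpha$ and $\gamma$ are $N$-segments sharing the same parameter; since $\gamma$ extends beyond that point as an $N$-segment (up to $y_0$ or $y_1$), the standard corner-minimization argument forces $\alpha$ and $\gamma$ to have equal velocity there, hence $\alpha=\gamma$ as geodesics, placing $r$ on $\partial W(y_0,y_1)$ — a contradiction. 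Exits through $y_0$ or $y_1$ are similarly forbidden, since these cut points admit no further $N$-segment extension. The same velocity-matching argument excludes $q\in\partial I\cap\partial J$. Hence $q\in I\cup J$, and in particular $W(y_0,y_1)=W_I\cup W_J$.

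Next I would show that $W_I$ is open in $W(y_0,y_1)$ under the contradiction hypothesis (the argument for $W_J$ being symmetric). Fix $r\in W_I$. Since $r\notin W_J$, the previous step forces every $N$-segment to $r$ to enter through the open arc $I$. For $r_n\to r$ in $W(y_0,y_1)$, suppose for contradiction that infinitely many $r_n$ lie in $W_J$, and choose $N$-segments $\beta_n\in\Gamma_N(r_n)$ entering $J$. Forward completeness together with compactness of the indicatrix then yields a subsequential limit $\beta$, an $N$-segment to $r$, whose entry point on $S_{2\delta_0}(x)$ lies in $\overline J$. But every $N$-segment to $r$ must enter $I$, so the entry point of $\beta$ would lie in $I\cap\overline J=\emptyset$, a contradiction. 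Hence $r_n\in W_I$ for all large $n$, which proves openness. Combining with Step~1, $W(y_0,y_1)=W_I\cup W_J$ is then a disjoint open cover of a connected set by two non-empty pieces, the desired contradiction.

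The hardest part will be Step~1: one must carefully rule out all ways an $N$-segment to an interior point of $W(y_0,y_1)$ could avoid the open arcs $I$ and $J$ — crossing a bounding $N$-segment transversally, passing through the vertices $y_0,y_1$, or landing exactly at one of the shared endpoints of $\overline I$ and $\overline J$ — each handled by the principle that two $N$-segments meeting at a common parameter either coincide as geodesics or block each other's extension as $N$-segments. Once Step~1 is established, the topological conclusion is essentially immediate.
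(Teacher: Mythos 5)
Your proposal is correct and is essentially the paper's argument: decompose $W(y_0,y_1)=W_I\cup W_J$, invoke Lemma \ref{lem3.1} for non-emptiness, and conclude via connectedness of the disc domain $W(y_0,y_1)$. The paper runs the connectedness step directly by observing that $W_I$ and $W_J$ are relatively closed (via the same limit-of-$N$-segments reasoning you use), whereas you assume disjointness and show both sets are open — a dual formulation of the same argument.
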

\begin{proof}
It is clear that
the  set $W(y_0,y_1)$ is the union of $W_I(y_0,y_1)$ and $W_J(y_0,y_1),$ and that both $W_I(y_0,y_1)$ and $W_J(y_0,y_1)$ are relatively closed in $W(y_0,y_1)$. Hence $W_I(y_0,y_1)\cap W_J(y_0,y_1)\neq\emptyset$, since $W(y_0,y_1)$ is connected, and neither of $W_I(y_0,y_1)$ nor $W_J(y_0,y_1)$ is empty by Lemma \ref{lem3.1}.
$\qedd$
\end{proof}

Let us recall that an injective continuous map from the  open interval $(0,1)$  or closed interval $[0,1]$  of $\mathbb R$ into $M$ is called a {\it Jordan arc}. An injective continuous map from a circle $S^1$ into $M$ is called a {\it Jordan curve}. The image of a Jordan arc or a Jordan curve is also called a {\it Jordan arc} or {\it Jordan curve}, respectively.

\begin{lemma}\label{lem3.4}
Suppose that the cut points $y_0$ and $y_1$ of $N$ are sufficiently close each other, so that $ W_I(y_0,y_1)\cap W_J(y_0,y_1)\subset B_{\delta_0}(x).$ 
Then,
for each point $t\in I,$ there exists a unique point  $r\in W_I(y_0,y_1)\cap W_J(y_0,y_1)$ such that
there exists a sector at $r$ containing $t$ or there exists  an $N$-segment to $r$ which passes  through the point $t.$
\end{lemma}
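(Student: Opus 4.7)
The plan is to associate to each $t\in I$ a canonical point $r(t)\in W_I(y_0,y_1)\cap W_J(y_0,y_1)$ by maximally extending an $N$-segment through $t$ into $W(y_0,y_1)$, and then to verify existence and uniqueness separately. Concretely, for $t\in I$ I would pick an $N$-segment $\gamma_t\colon[0,2\delta_0]\to M$ with $\gamma_t(2\delta_0)=t$; extend $\gamma_t$ forward as a geodesic into $W(y_0,y_1)$, and let $r(t)$ be the endpoint of the maximal $N$-prolongation of $\gamma_t$ that remains inside $\overline{W(y_0,y_1)}$. By construction, $\gamma_t$ is an $N$-segment to $r(t)$ passing through $t\in I$, so $r(t)\in W_I(y_0,y_1)$ and alternative~(b) of the statement is automatically realized.

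The critical step is to show $r(t)\in W_J(y_0,y_1)$. If this failed, then a whole neighborhood $U$ of $r(t)$ in $W(y_0,y_1)$ would consist only of points admitting $N$-segments meeting $S_{2\delta_0}(x)$ inside $I$, so $U\subset W_I\setminus W_J$. I would then argue, following the pattern of the last paragraph of the proof of Lemma~\ref{lem3.1}, that one can extend $\gamma_t$ strictly past $r(t)$ as an $N$-segment, contradicting the maximality in the definition of $r(t)$: strong convexity of $B_{4\delta_0}(x)$ together with the minimality of $N$-segments forces two distinct minimizing geodesics not to cross inside the ball, so the competing $N$-segments from $I$ cannot already rule out a further $N$-prolongation of $\gamma_t$. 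By the Lemma~\ref{lem3.1} hypothesis I also obtain $r(t)\in B_{\delta_0}(x)$.

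For uniqueness, suppose $r,r'\in W_I\cap W_J$ both satisfy the disjunction for the same $t$. The case where both realize~(b) is immediate: any two $N$-segments through $t$ realizing~(b) arise as extensions of $\gamma_t$ with the same tangent vector $\dot\gamma_t(2\delta_0)$, so they share a common forward geodesic, and the cut point of $N$ along a fixed geodesic is unique, hence $r=r'$. The remaining cases in which~(a) is realized reduce to~(b) via a first-variation argument using Proposition~\ref{prop2.1}: if $r$ had a sector containing $t$, the two $N$-segments to $r$ bounding that sector would have to separate $t$ from $r$ in $W(y_0,y_1)$, so by the planar structure of $W$ another $N$-segment to $r$ through $t$ would be forced to cross $\gamma_t$ transversely inside $B_{4\delta_0}(x)$, contradicting strong convexity. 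The main obstacle is the second paragraph above: this maximality-violation argument is the technical heart of the lemma, and once it is in place the dichotomy~(a)/(b) and the uniqueness follow from planarity and strong convexity.
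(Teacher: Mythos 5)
Your construction of $r(t)$ as the endpoint of the maximal $N$-prolongation of an $N$-segment through $t$ does not work, and the gap is exactly at the step you yourself identify as the technical heart: the claim that $r(t)\in W_J(y_0,y_1)$. If $r(t)\notin W_J(y_0,y_1)$, then (using that $W_J(y_0,y_1)$ is relatively closed in $W(y_0,y_1)$, as in the proof of Lemma \ref{lem3.3}) a neighborhood of $r(t)$ indeed lies in $W_I(y_0,y_1)\setminus W_J(y_0,y_1)$; but this yields no contradiction with maximality. The point $r(t)$ is simply the cut point of $N$ along the extended geodesic, and that is perfectly compatible with $r(t)$ and all nearby points admitting only $N$-segments which cross $I$: this happens whenever the prolongation of $\gamma_t$ ends on a branch of the cut locus hanging off toward $I$ inside $W_I(y_0,y_1)\setminus W_J(y_0,y_1)$ (the endpoint may be a cut point with two $N$-segments both meeting $I$, or a conjugate/focal-type cut point with a single $N$-segment). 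Nothing about strong convexity or non-crossing of $N$-segments lets you extend $\gamma_t$ past its cut point, so the purported contradiction cannot be derived, and in such a configuration the point demanded by the lemma is \emph{not} your $r(t)$ at all: it is a different point of $W_I(y_0,y_1)\cap W_J(y_0,y_1)$ which has a \emph{sector} containing $t$, i.e.\ the alternative (a) that your existence argument never produces.

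The paper's proof supplies precisely the idea your proposal is missing. Assuming no $N$-segment to a point of $W_I(y_0,y_1)\cap W_J(y_0,y_1)$ passes through $t$, it considers the extremal parameters $t_+:=\min I_+$ and $t_-:=\max I_-$, where $I_\pm$ collect the points of $\overline I$ on $[t,1]$ resp.\ $[0,t]$ hit by $N$-segments to points of $W_I(y_0,y_1)\cap W_J(y_0,y_1)$, takes corresponding points $r_\pm$, and applies Lemma \ref{lem3.3} to the pair $(r_-,r_+)$: if $r_+\ne r_-$ one obtains a point of $W_I(r_-,r_+)\cap W_J(r_-,r_+)$ whose $N$-segment crosses $(t_-,t_+)$, contradicting the extremality of $t_\pm$; hence $r_+=r_-$ and the sector at this point between the two segments through $t_-$ and $t_+$ contains $t$. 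Your uniqueness discussion in the pure case (b) is essentially the paper's observation that $N$-segments cannot meet at interior points, but without a correct existence argument covering case (a) the proof does not go through.
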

\begin{proof}
Since $I$ and the closure $\overline I$ of $I$ are Jordan arcs,
we may assume that $I=(0,1)$ and $\overline I=[0,1].$
Suppose that there does not exist an $N$-segment to a point in $W_I(y_0,y_1)\cap W_J(y_0,y_1)$ passing through some $t\in(0,1)=I.$  Let $t_+\in I$ and $t_-\in I$ denote the minimum and
 the maximum of the following sets respectively:
$$I_+:=\bigcup_{r\in  W_I(y_0,y_1)\cap W_J(y_0,y_1)}    \{s\in[t,1] |\:{\rm there\: exists\: an\:element\: of\: }\Gamma_N(r){\rm \: passing \:through}\: s \}$$
$$I_-:=\bigcup_{r\in  W_I(y_0,y_1)\cap W_J(y_0,y_1)} \{s\in[0,t] |\:{\rm there\: exists\: an\:element\: of\: }\Gamma_N(r){\rm \: passing \:through}\: s \}$$  
It is clear that there exists a point $r_+$ (respectively $r_-$) in $W_I(y_0,y_1)\cap W_J(y_0,y_1)$ such that there exists an $N$-segment to $r_+$ (respectively $r_-$) passing
through $t_+$ (respectively $t_-$).
Suppose that $r_+\ne r_-.$ By applying Lemma \ref{lem3.3},
 we get a cut point $r\in W_I(r_-,r_+)\cap W_J(r_-,r_+)$    such that there exists an $N$-segment to $r$ passing through a point in $(t_-,t_+).$
Notice that $t_-<t<t_+,$ since we assumed that there does not exist an $N$-segment to a point in $W_I(y_0,y_1)\cap W_J(y_0,y_1)$ passing through the point $t.$
 This contradicts the definitions of $t_+$ and $t_-.$ Thus, $r_+=r_-,$ and  there exists a sector at $r_+=r_-\in W_I(y_0,y_1)\cap W_J(y_0,y_1)$ containing $t.$ 
The uniqueness of the existence of the point $r$ is clear, since $r\in W_I(y_0,y_1)\cap W_J(y_0,y_1)\subset C_N,$ and an $N$-segment does not intersect any other $N$-segment at its interior point.

$\qedd$
\end{proof}

\begin{proposition}\label{prop3.5}
Let $x$ be a cut point of $N$, and $B_{4\delta_0}(x)$ a strongly convex neighborhood at $x$. 
Then, there exists $\delta\in(0,\delta_0)$ such that any cut point 
$y\in B_{\delta}(x)\cap C_N$ can be joined {with} $x$ by a Jordan arc in $ B_{\delta_0}(x)\cap C_N.$ 
\end{proposition}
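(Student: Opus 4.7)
Set $y_0:=x$ and, for each candidate cut point $y\in C_N\cap B_\delta(x)$, set $y_1:=y$. By Lemma \ref{lem3.1}, choose $\delta\in(0,\delta_0)$ (uniform in $y$) so that $W_I(x,y)\cap W_J(x,y)\subset B_{\delta_0}(x)$. Parametrize the closure $\overline I$ of the open subarc $I\subset S_{2\delta_0}(x)$ by a homeomorphism $\overline\gamma:[0,1]\to\overline I$, oriented so that $\overline\gamma(0)$ lies on the bounding $N$-segment of $W(x,y)$ belonging to $\Gamma_N(x)$ and $\overline\gamma(1)$ on the one belonging to $\Gamma_N(y)$. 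For each $t\in(0,1)$, Lemma \ref{lem3.4} assigns a unique cut point $r(t)\in W_I(x,y)\cap W_J(x,y)\cap C_N$ whose sector contains $\overline\gamma(t)$ or admits an $N$-segment through $\overline\gamma(t)$; extend $r$ by $r(0):=x$ and $r(1):=y$. The strategy is to prove $r$ continuous and then extract a Jordan arc inside its image.

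To establish continuity of $r:[0,1]\to C_N\cap\overline{B_{\delta_0}(x)}$, suppose $t_n\to t_\ast$; compactness of $\overline{B_{\delta_0}(x)}$ yields a subsequential limit $r(t_n)\to r_\ast$. For each $n$ pick an $N$-segment $\sigma_n$ to $r(t_n)$ that either passes through $\overline\gamma(t_n)$ (case (b) of Lemma \ref{lem3.4}) or bounds the sector at $r(t_n)$ containing $\overline\gamma(t_n)$ and meets $S_{2\delta_0}(x)$ at a point converging to $\overline\gamma(t_\ast)$ (case (a)). Standard geodesic compactness on the forward-complete $(M,F)$ extracts a subsequential limit $N$-segment $\sigma_\ast$ ending at $r_\ast$ and passing through $\overline\gamma(t_\ast)$. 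If $t_\ast\in(0,1)$, $\sigma_\ast$ verifies the hypothesis of Lemma \ref{lem3.4} at $\overline\gamma(t_\ast)$, and the uniqueness clause gives $r_\ast=r(t_\ast)$. If $t_\ast\in\{0,1\}$, then $\sigma_\ast$ and the associated bounding $N$-segment from $x$ or $y$ share the interior point $\overline\gamma(t_\ast)$; since two distinct $N$-segments cannot intersect at an interior point (noted at the end of the proof of Lemma \ref{lem3.4}), they coincide, forcing $r_\ast\in\{x,y\}$ in agreement with $r(t_\ast)$.

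The image $r([0,1])$ is then a path-connected compactum in $C_N\cap\overline{B_{\delta_0}(x)}$ containing both $x$ and $y$. By the classical fact that in a Hausdorff space any continuous path joining two points contains a Jordan arc between them, one extracts a Jordan arc from $x$ to $y$ lying in $C_N\cap\overline{B_{\delta_0}(x)}$; a mild shrinking of $\delta$ (controlled by how close cut points $r(t)$ can migrate to $\partial B_{\delta_0}(x)$ when $d(x,y)$ is small) places this Jordan arc in the open ball $B_{\delta_0}(x)$. The main technical obstacle is the continuity of $r$: one must combine sequences $t_n\to t_\ast$ for which the mode of Lemma \ref{lem3.4} switches between (a) and (b), produce a single limiting $N$-segment $\sigma_\ast$, and then invoke either Lemma \ref{lem3.4}'s uniqueness (interior $t_\ast$) or the non-crossing property of $N$-segments (endpoint $t_\ast$) to pin down $r_\ast$.
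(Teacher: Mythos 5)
Your proposal follows the paper's proof essentially step for step: the paper defines the same map $\xi:\overline I\to C_N\cap B_{\delta_0}(x)$ via Lemma \ref{lem3.4} (with $\xi(0)=y$, $\xi(1)=x$ after choosing $\delta$ so that $W_{I_z}(x,z)\cap W_{J_z}(x,z)\subset B_{\delta_0}(x)$ for all $z\in B_\delta(x)$), asserts its continuity, and then extracts a Jordan arc from its image. The only divergence is the final injectivity step, where the paper explicitly collapses the countably many maximal intervals of constancy of $\xi$ by a monotone surjection $f$ (Lemma 4.1.3 of \cite{SST}) instead of invoking the general theorem that a continuous path between two points of a Hausdorff space contains a Jordan arc; both routes are valid, and your continuity discussion is if anything more detailed than the paper's.
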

\begin{proof}
Choose a sufficiently small positive $\delta,$ so that $W_{I_z}(x,z)\cap W_{J_z}(x,z)
\subset B_{\delta_0}(x)$ for any $z\in B_{\delta}(x)\setminus\{x\}.$
Here $I_z$ and $J_z$ denote the 
open subarcs of $S_{2\delta_0}(x)$ that form part of the boundary of
$W(x,z):=\Sigma_x(z)\cap \Sigma_z(x),$ and  $\Sigma_z(x)$ (respectively $\Sigma_x(z)$) denotes the sector at $z$ (respectively at $x$) containing $x$ (respectively $z$). 
 Choose any  $y\in C_N\cap B_\delta(x)\setminus \{x\}$ and fix it.
Since $I$ and its closure $\bar{I}$ are  Jordan arcs, we may assume that $I=(0,1)$ and $\bar{I}=[0,1].$
Here $I$ and $J$ denote the subarc of $S_{2\delta_0}(x)$ corresponding to the cut point $y.$
 Here we assume that the  $N$-segment to $y$ (respectively $x$) forming the boundary of $W_I(x,y)\cap W_J(x,y)$ passing through the point $0$ (respectively $1$), which is an endpoint of $I.$

 We will construct a homeomorphism from $\bar{I}$ into $C_N\cap B_{\delta_0}(x)$. Choose any $t\in I$ and fix it. 
If there exists a cut point $z\in W_I(x,y)\cap W_J(x,y)$ such that a minimal geodesic segment in $\Gamma_N(z)$ passes through $t$, we define $\xi(t)=z$. 
Suppose that there is no such a cut point $z\in W_I(x,y)\cap W_J(x,y)$ for $t$. Then, from Lemma \ref{lem3.4}, it follows that there exists a sector $\Sigma_r$ at $r$
containing $t$ 
 for some cut point $r\in W_I(y_0,y_1)\cap W_J(y_0,y_1)$. 
We define $\xi(t)=r$ for such a $t$. Hence we have constructed a continuous map $\xi$ from $\bar{I}$ into $B_{\delta_0}(x)\cap C_N$, where we define  $\xi(0)=y$ and $\xi(1)=x$.  

It is clear that if $\xi(t_1)=\xi(t_2)$ holds for distinct $t_1,t_2\in \overline I$, then there exists an interval $[a,b]\subset\overline I$ such that $\xi |_{[a,b]}=\xi(t_1)$, $t_1,t_2\in [a,b]$. Hence there exist countably many mutually disjoint subintervals $\{I_n\}_n$ of $\overline I$, such that $\xi(t_1)=\xi(t_2)$ holds for distinct $t_1$, $t_2$ if and only if $t_1$ and $t_2$ are elements of a common $I_n$.

Let $f:[0,1]\to [0,1]$ be a continuous non-decreasing function such that $f(0)=0$, $f(1)=1$ and such that 
$f(t_1)=f(t_2)$ for distinct $t_1$, $t_2$ if and only if $t_1$ and $t_2$ lie in a common $I_n$ (the existence of the function $f$ is proved in Lemma 4.1.3 in \cite{SST}).

Then the curve $c:[0,1]\to B_{\delta_0}(x)\cap C_N$ defined by
\begin{equation*}
c(u):=\xi(\max f^{-1}(u))
\end{equation*}
is injective and continuous.
Hence, the cut points $y$ and $x$ can be joined by a Jordan arc in $B_{\delta_0}(x)\cap C_N$. 
$\qedd$
\end{proof}

A topological set $T$ is called a {\it tree} if 
 any two points in $T$ can be joined by a unique Jordan arc in $T$. 
Likely, a topological set $C$ is called a {\it local tree} if for every point
 $x\in C$ and for any neighborhood $U$ of $x$, 
 there exists a neighborhood $V\subset U$ of $x$ such that $C$ 
 is a tree. A point of a local tree $C$ is called  an {\it endpoint} of the local tree if there exists a unique sector at $x.$

\begin{theorem}\label{th3.6}
Let $N$ be a closed  subset of a (forward) complete 2-dimensional Finsler manifold $M.$
Then the cut locus of $N$ is a local tree.

\end{theorem}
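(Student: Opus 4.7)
The plan is to fix a cut point $x\in C_N$ and an arbitrary open neighborhood $U$ of $x$ in $C_N$, and to exhibit an open sub-neighborhood $V\subset U$ of $x$ which is a tree. First I shrink $\delta_0>0$ so that $B_{4\delta_0}(x)$ is a strongly convex ball in $M$, $B_{\delta_0}(x)\cap C_N\subset U$, and $B_{4\delta_0}(x)\cap N=\emptyset$; the last condition is possible because $C_N\cap N=\emptyset$ and $N$ is closed, so $d(x,N)>0$. Applying Proposition \ref{prop3.5} I obtain $\delta\in(0,\delta_0)$ such that every cut point in $B_\delta(x)\cap C_N$ can be joined to $x$ by a Jordan arc lying in $B_{\delta_0}(x)\cap C_N$. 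I then take $V:=B_\delta(x)\cap C_N$, which is open in $C_N$ and contained in $U$.

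For the existence of a joining arc between any two points $y_1,y_2\in V$, Proposition \ref{prop3.5} furnishes Jordan arcs from $y_1$ to $x$ and from $y_2$ to $x$ inside $B_{\delta_0}(x)\cap C_N$; concatenating one with the reverse of the other yields a continuous path from $y_1$ to $y_2$ in $B_{\delta_0}(x)\cap C_N$, from which I extract a Jordan subarc by the standard Hausdorff extraction lemma. The resulting arc is contained in $B_{\delta_0}(x)\cap C_N\subset U$.

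For uniqueness, suppose $c_1\neq c_2$ are two Jordan arcs in $B_{\delta_0}(x)\cap C_N$ joining $y_1$ to $y_2$. By examining where they first and last disagree, the union $c_1\cup c_2$ contains a Jordan curve $\Gamma\subset B_{\delta_0}(x)\cap C_N$. Since $B_{4\delta_0}(x)$ is strongly convex and hence homeomorphic to a $2$-disc, the Jordan curve theorem shows that $\Gamma$ bounds a topological disc $D\subset B_{4\delta_0}(x)$. Choose any interior point $p\in D^\circ$; since $p\notin N$, there exists an $N$-segment $\gamma:[0,d(N,p)]\to M$ ending at $p$. Because $\gamma(0)\in N$ lies outside $B_{4\delta_0}(x)\supset D$ while $p\in D^\circ$, the curve $\gamma$ must cross $\Gamma=\partial D$ at some parameter $t_0\in(0,d(N,p))$, so $q:=\gamma(t_0)$ is an interior point of the $N$-segment $\gamma$. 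But $q\in\Gamma\subset C_N$, so some maximal $N$-segment $\beta$ ends at $q$. The fact used at the end of the proof of Proposition \ref{prop3.5} (a broken-minimizer argument showing that any $N$-segment meeting another at an interior point must be a restriction of that other one) forces $\beta=\gamma|_{[0,t_0]}$, which extends along $\gamma$ past $q$ as an $N$-segment, contradicting the maximality of $\beta$.

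The main obstacle is precisely this uniqueness step: one has to arrange simultaneously the strong convexity of $B_{4\delta_0}(x)$, the disjointness $B_{4\delta_0}(x)\cap N=\emptyset$, and the containment of the enclosed disc $D$ inside $B_{4\delta_0}(x)$, so that the $N$-segment to a chosen interior point $p$ of $D$ is forced to cross $\Gamma$ at a genuinely interior parameter and thereby clash with the cut-locus definition. Once these geometric constraints are in place the contradiction is immediate, $V$ is a tree, and $C_N$ is a local tree.
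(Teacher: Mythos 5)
Your argument follows the paper's proof in its essentials: Proposition \ref{prop3.5} supplies the Jordan arcs to $x$, concatenation plus extraction of a subarc gives existence, and the Jordan-curve/enclosed-disc argument (an $N$-segment to an interior point of the enclosed disc must cross the curve at an interior parameter, so a cut point would be an interior point of an $N$-segment) gives uniqueness. Your uniqueness step is in fact spelled out more carefully than in the paper, and the observation that one must first arrange $B_{4\delta_0}(x)\cap N=\emptyset$ is correct and left implicit there.

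The one genuine gap is the choice of $V$. With $V=B_\delta(x)\cap C_N$, Proposition \ref{prop3.5} only places the connecting arcs inside $B_{\delta_0}(x)\cap C_N$: the map $\xi$ in its proof lands in $W_I(x,y)\cap W_J(x,y)$, which is only known to lie in $B_{\delta_0}(x)$, so nothing keeps the arc from $y$ to $x$ inside $B_\delta(x)$. Consequently two points of your $V$ are joined by a unique arc in $U$, but not necessarily by an arc in $V$; under the definition used in Section \ref{sec3} (a local tree requires a neighborhood $V$ that is itself a tree, i.e.\ the unique joining arcs lie in $V$) your $V$ has not been shown to be a tree. The paper repairs exactly this point by replacing $B_\delta(x)\cap C_N$ with $\Sigma\cap C_N\cap B_\delta(x)$, where $\Sigma$ is the intersection of all sectors $\Sigma_y(x)$ over $y\in S_\delta(x)\cap C_N$: a cut-locus arc starting in $\Sigma\cap B_\delta(x)$ cannot cross the bounding $N$-segments of these sectors, hence cannot reach $S_\delta(x)$ and stays in $\Sigma\cap B_\delta(x)$; one must then check separately that this smaller set is still a neighborhood of $x$ in $C_N$ (which the paper does using that no $N$-segment to a point of $C_N\cap S_\delta(x)$ passes through $x$). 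You need to add such a step, or otherwise show that your connecting arcs remain in $V$.
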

\begin{proof}
Let $x$ be a cut point of $N,$ and $U$ a neighborhood of $x.$
Choose a strongly convex ball $B_{4\delta_0}(x)\subset U.$ Let $\delta$ be a positive number guaranteed in Proposition \ref{prop3.5}.
Let $\Sigma$ denote the intersection of all $\Sigma_y(x),$ where $y\in S_{\delta}(x)\cap C_N.$  From Proposition \ref{prop3.5}, it follows that any 
point $y\in\Sigma\cap C_N\cap B_{\delta}(x)$ can be joined by a Jordan arc 
$c$ in
 $B_{\delta_0}(x) \cap C_N.$ 
Since the curve $c$ does not intersect $S_{\delta}(x),$
 the curve lies in the  set $\Sigma\cap B_{\delta}(x)(\subset U).$
 Hence any  cut point
 of $N$ in $\Sigma \cap B_{\delta}(x)$ can be joined to the point $x$  by a Jordan arc in $\Sigma\cap C_N\cap B_{\delta}(x).$
 This implies that any two points in $\Sigma\cap C_N\cap B_{\delta}(x)$ can be joined by a
Jordan arc in $\Sigma\cap C_N\cap B_{\delta}(x)$ by way of $x.$
  Suppose that there exist two Jordan arcs in $\Sigma\cap C_N\cap B_{\delta}(x)$ joining two cut points of $N$ in $\Sigma\cap B_{\delta}(x).$ Then,   the Jordan arcs contain a Jordan curve $\alpha$ as a subset in the convex ball $B_{\delta}(x).$ Take a point $z$ in the domain bounded by $\alpha.$
 Any $N $-segments to $z$ intersect $\alpha\subset C_N.$ This is a 
 contradiction. Thus, any two points in  $\Sigma\cap C_N\cap B_{\delta}(x)$ is joined by a unique curve in the set. It is trivial that $\Sigma\cap C_N\cap B_{\delta}(x)$ is a neighborhood of $x$ since any $N$-segment to a point of 
 $ C_N\cap S_{\delta}(x)$ does not pass through the point $x.$ Therefore, $\Sigma\cap C_N\cap B_{\delta}(x)$ is a tree and a neighborhood of $x$ in $C_N.$  
$\qedd$
\end{proof}

\section{ Key lemmas}\label{sec4}

In this section, two key lemmas (Lemmas \ref{lem4.3} and \ref{lem4.4})
are proved. Before stating them, we need two fundamental lemmas which are true for any dimensional Finsler manifolds. 
The first one  is well known (see for example \cite{BCS}, Lemma 6.2.1).

\begin{lemma}\label{lem4.1}
Let $(M,F)$ be a (forward) complete Finsler manifold. Then,
for each positive number $a>0$, there exists a constant $\lambda(a)>1$ such that
\begin{equation*}
\lambda(a)^{-1}d(y,x)\leq d(x,y)\leq \lambda(a) d(y,x)
\end{equation*} 
for any $x,y\in B_a(p)$.
\end{lemma}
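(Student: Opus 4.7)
The plan is to derive both inequalities from a single reverse-curve argument. Given a unit speed minimal geodesic $\gamma:[0,L]\to M$ from $y$ to $x$, the time-reversed curve $\ol\gamma(t):=\gamma(L-t)$ joins $x$ to $y$ and has integral length
\begin{equation*}
L(\ol\gamma)=\int_0^L F\bigl(\gamma(L-t),-\dot\gamma(L-t)\bigr)\,dt,
\end{equation*}
which is controlled by the supremum of $F(x,-v)$ over the indicatrix bundle restricted to any compact set $K$ containing the image of $\gamma$. Since $F$ is continuous and positive away from the zero section and $SM|_K$ is compact, this supremum is finite; combined with the $1$-homogeneity of $F$ it yields $F(x,-v)\le\lambda F(x,v)$ on $K$. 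This immediately gives $d(x,y)\le L(\ol\gamma)\le\lambda L(\gamma)=\lambda\,d(y,x)$, and interchanging $x$ and $y$ produces the reverse bound.

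\textbf{Key steps.} First, I would apply the forward Hopf--Rinow theorem for Finsler manifolds to see that $\ol{B_a(p)}$ is compact. Second, using that both the forward and the backward Finsler distances induce the underlying manifold topology (which follows from the local equivalence of $F$ with the Euclidean norm in a coordinate chart), the asymmetric distance $d$ is continuous on $M\times M$, and therefore
\begin{equation*}
M_1:=\max\{d(y,x):x,y\in\ol{B_a(p)}\}<\infty.
\end{equation*}
Third, setting $R:=a+M_1$, any unit speed minimal geodesic $\gamma$ with endpoints in $B_a(p)$ has length $L\le M_1$ and satisfies $d(p,\gamma(t))\le d(p,\gamma(0))+t\le a+M_1=R$, so $\gamma\subset\ol{B_R(p)}$, which is again compact. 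Fourth, on the compact set $SM|_{\ol{B_R(p)}}$ the continuous positive function $(x,v)\mapsto F(x,-v)$ attains a finite maximum $\lambda=\lambda(a)$, which may be enlarged if necessary so that $\lambda>1$. The reverse-curve computation then yields $d(x,y)\le\lambda\,d(y,x)$ for every $x,y\in B_a(p)$, and interchanging $x$ and $y$ gives the companion bound $\lambda^{-1}d(y,x)\le d(x,y)$.

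\textbf{Main obstacle.} The delicate point is ensuring that the minimal geodesic from $y$ to $x$ stays inside a single fixed compact set, so that one uniform sphere-bundle bound applies along the whole geodesic. Since $F$ may be very far from symmetric, a forward small ball need not have small backward diameter, and $\ol{B_a(p)}$ itself cannot be used. The two-stage argument above resolves this: continuity of $d$ on $M\times M$ yields the a priori length bound $M_1$, and this in turn determines the larger compact ball $\ol{B_{a+M_1}(p)}$, which is guaranteed to contain every minimal geodesic between points of $B_a(p)$.
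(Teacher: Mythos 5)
Your proof is correct. The paper does not prove this lemma at all---it is quoted as well known with a reference to \cite{BCS}, Lemma 6.2.1---and your argument (a uniform bound on $F(x,-v)/F(x,v)$ over the indicatrix bundle of a compact set containing every minimizing geodesic between points of $B_a(p)$, obtained via the forward Hopf--Rinow theorem, followed by reversing a minimal geodesic) is essentially the standard proof behind that citation, including the correct two-stage choice of the compact set $\ol{B_{a+M_1}(p)}$ needed because a forward ball need not have bounded backward diameter a priori.
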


\begin{lemma}\label{lem4.2} 
Let $(M,F)$ be a (forward) complete Finsler manifold
and, 
 let $\alpha:[0,\infty)\times [0,2\pi]\to M$ denote the map defined by
\begin{equation*}
\alpha(t,\theta):=\exp_p(tv(\theta)),
\end{equation*}
where $v(\theta)$ denotes  a parametrization of the indicatrix curve $S_pM=\{v\in T_p M| \ F(p,v)=1\}$, and $\theta$ denotes the usual Euclidean angle.

Then for each $a>0$, there exists a positive constant $\mathcal C(a)$ such that
\begin{equation*}
F\left(\frac{\partial \alpha}{\partial \theta}(t,\theta)\right)\leq \mathcal C(a),
\end{equation*}
for any $t\in[0,a]$ and any $\theta\in[0,2\pi]$.
\end{lemma}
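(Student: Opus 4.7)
The plan is a clean compactness argument, exploiting the one available degree of regularity of the exponential map. First I would write the partial derivative explicitly as
\begin{equation*}
\frac{\partial\alpha}{\partial\theta}(t,\theta)=(d\exp_p)_{tv(\theta)}\bigl(t\,v'(\theta)\bigr),
\end{equation*}
which is the usual formula obtained from the chain rule applied to $\alpha(t,\theta)=\exp_p(tv(\theta))$, using that $\theta\mapsto v(\theta)\in S_pM$ is smooth. Here $v'(\theta)$ denotes the derivative of $v(\theta)$ with respect to $\theta$ in $T_pM$.

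Next I would argue that this expression is continuous in $(t,\theta)$ on the whole of $[0,\infty)\times[0,2\pi]$. Away from $t=0$ this is clear, since $\exp_p$ is $C^\infty$ on $\widetilde{T_pM}$. At $t=0$ we only have the Whitehead regularity $\exp_p\in C^1(T_pM)$ recalled in the introduction, which is, however, exactly what we need: $(d\exp_p)$ depends continuously on its basepoint in $T_pM$, so the composed expression above is continuous at $(0,\theta)$ and in fact evaluates to $0$ there (consistent with the fact that $\alpha(0,\theta)=p$ is constant in $\theta$).

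Now since $F:TM\to[0,\infty)$ is continuous on $TM$, the composition
\begin{equation*}
(t,\theta)\longmapsto F\!\left(\frac{\partial\alpha}{\partial\theta}(t,\theta)\right)
\end{equation*}
is a nonnegative continuous function on $[0,\infty)\times[0,2\pi]$. For any fixed $a>0$, the rectangle $[0,a]\times[0,2\pi]$ is compact, so this function attains a finite maximum on it; I would take $\mathcal C(a)$ to be precisely that maximum, which yields the claimed inequality for all $(t,\theta)\in[0,a]\times[0,2\pi]$.

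There is essentially no obstacle here: the only delicate point is that $\exp_p$ is known to be merely $C^1$ at the origin, but that is already enough to guarantee continuity of $\partial\alpha/\partial\theta$ across $t=0$, which is all the compactness argument requires.
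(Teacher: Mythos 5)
Your proof is correct. It reaches the same compactness conclusion as the paper, but by a genuinely different technical route. The paper identifies $Y_\theta(t)=\frac{\partial\alpha}{\partial\theta}(t,\theta)$ as a Jacobi field along $\gamma_\theta$, writes down the Jacobi equation with initial data $Y_\theta(0)=0$, $D_TY_\theta(0)=v'(\theta)$, and invokes continuous dependence of solutions of this linear ODE on the initial conditions (and on $\theta$) to get a uniform bound over the compact parameter set $[0,a]\times[0,2\pi]$. You instead write the same vector field via the chain rule as $(d\exp_p)_{tv(\theta)}(t\,v'(\theta))$ and lean on Whitehead's theorem --- already quoted in the paper's introduction --- that $\exp_p$ is $C^1$ on all of $T_pM$, including the zero section, so that the expression is jointly continuous in $(t,\theta)$ and $F$ of it attains a maximum on the compact rectangle. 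Your route is more economical given that the $C^1$ regularity is taken as known, and you correctly flag the only delicate point (continuity across $t=0$); the paper's route is self-contained modulo standard ODE theory and has the side benefit of exhibiting the variation field explicitly as a Jacobi field, which is the form in which such estimates are usually quoted. Both arguments are sound.
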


\begin{proof}
For each $\theta$,
\begin{equation*}
Y_\theta (t):=\frac{\partial\alpha}{\partial \theta}(t,\theta)
\end{equation*}
is a Jacobi field along the geodesic $\gamma_\theta (t):=\alpha(t,\theta)$ (see  p. 130 in \cite{BCS} or  p.167 in  \cite{S} for the details on the Jacobi equation in Finsler geometry).
The Jacobi field $Y_\theta (t)$ satisfies the differential equation
\begin{equation*}
D_TD_T Y_\theta (t)+R(Y_\theta (t),\dot\gamma_\theta(t))\dot\gamma_\theta(t)=0
\end{equation*}
with initial conditions 
\begin{equation*}
Y_\theta (0)=0,\quad D_TY_\theta(0)=\frac{\partial v}{\partial\theta}(0).
\end{equation*}
Here $D_T$ denotes the absolute derivative along $\gamma_\theta (t)$ with reference vector $T(t):=\dot\gamma_\theta(t)$ and $R$ denotes the 
$h$-curvature of $M$.
Since $Y_\theta (t)$ depends continuously on the initial conditions, there exists a constant $\mathcal C(a)$ such that 
\begin{equation*}
F(Y_\theta (t))\leq \mathcal C(a)
\end{equation*}
for any $t\in [0,a]$ and $\theta\in[0,2\pi]$. 
$\qedd$
\end{proof}

We define the length $l(c)$ {of} a continuous curve
 $c\; : \:[a,b]\to M$  by
\begin{equation}\label{d-length}
l(c):=\sup\{\sum_{i=1}^{k}\;d(c(t_{i-1}),c(t_i)) \: | \: a=:t_0<t_1<\dots<t_{k-1}<t_k:=b\}.
\end{equation}

From now on, we will fix a Jordan arc $c\; : \: [0,1]\to C_N$ in the cut locus of a closed subset
$N$ of a (forward) complete 2-dimensional Finsler manifold $(M,F).$

\begin{lemma}\label{lem4.3}
Let $[a,b]$ be a subinterval  of  $[0,1].$
Suppose that there exists a positive number $\epsilon_0$  such that
for each $t\in [a,b)$ (respectively $t\in(a,b]),$
$$\lim_{t\searrow t_0}D_N(c(t_0),c(t))>\epsilon_0$$
$$ ({\rm respectively} \quad \lim_{t\nearrow t_0}D_N(c(t),c(t_0))>\epsilon_0).$$
Then, the length $l(c)$ of $c$ is not greater than 
$\frac{1}{\epsilon_0}(d(N,c(b))-d(N,c(a)) ),$ i.e., $c$ is rectifiable.  Here, 
 $$D_N(x,y):=\frac{d(N,y)-d(N,x)}{d(x,y)}.$$
\end{lemma}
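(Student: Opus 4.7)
The plan is to first establish, in the forward case, a pointwise inequality along the arc,
\begin{equation*}
d(c(u), c(v)) \leq \epsilon_0^{-1}\bigl(d(N, c(v)) - d(N, c(u))\bigr) \quad \text{for all } a \leq u \leq v \leq b,
\end{equation*}
and then deduce the lemma by summing over an arbitrary partition. Indeed, granting this inequality, every partition $\pi : a = t_0 < t_1 < \cdots < t_n = b$ of $[a,b]$ satisfies
\begin{equation*}
\sum_{i=1}^n d(c(t_{i-1}), c(t_i)) \leq \epsilon_0^{-1} \sum_{i=1}^n \bigl(d(N, c(t_i)) - d(N, c(t_{i-1}))\bigr) = \epsilon_0^{-1}\bigl(d(N, c(b)) - d(N, c(a))\bigr),
\end{equation*}
and the bound on $l(c)$ follows by taking the supremum over $\pi$.

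To prove the pointwise inequality I fix $u$ and consider
\begin{equation*}
U := \bigl\{w \in [u, v] : d(c(u), c(w)) \leq \epsilon_0^{-1}(d(N, c(w)) - d(N, c(u)))\bigr\}.
\end{equation*}
Clearly $u \in U$, and $U$ is closed in $[u,v]$ because both sides of the defining inequality depend continuously on $w$ (the Finslerian distance is continuous, $d_N$ is continuous, and $c$ is continuous). The key step is right-openness at interior points: if $w \in U$ with $w < v$, the forward hypothesis at $w$ furnishes $\delta(w) > 0$ such that $d(N, c(w')) - d(N, c(w)) \geq \epsilon_0 \, d(c(w), c(w'))$ for every $w' \in (w, w + \delta(w))$. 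For such $w'$, the Finslerian triangle inequality combined with $w \in U$ yields
\begin{equation*}
d(c(u), c(w')) \leq d(c(u), c(w)) + d(c(w), c(w')) \leq \epsilon_0^{-1}\bigl(d(N,c(w'))-d(N,c(u))\bigr),
\end{equation*}
so $w' \in U$. Setting $w^* := \sup U$, closedness gives $w^* \in U$; the right-opening property would then contradict the definition of the supremum unless $w^* = v$, which proves the pointwise inequality.

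The backward case is symmetric: one instead defines $U := \{w \in [u,v] : d(c(w), c(v)) \leq \epsilon_0^{-1}(d(N, c(v)) - d(N, c(w)))\}$, observes that $v \in U$, and uses the backward hypothesis at $w$ together with the triangle inequality $d(c(w'), c(v)) \leq d(c(w'), c(w)) + d(c(w), c(v))$ to establish left-openness at interior points, whence $\inf U = u$. The main conceptual observation is that the one-sided hypothesis reduces the length estimate cleanly to a pointwise $d$-versus-$d_N$ inequality, so no explicit construction of a finite ``good'' partition of $[a,b]$ is required; the only real care needed is applying the non-symmetric Finslerian triangle inequality with the orientation matching the direction of the hypothesis in each case.
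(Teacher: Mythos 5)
Your proof is correct, and it takes a slightly different (and in fact more careful) route than the paper. The paper's proof simply asserts that for any sufficiently fine subdivision $a=u_0<u_1<\cdots<u_n=b$ one has $d(N,c(u_{i+1}))-d(N,c(u_i))>\epsilon_0\, d(c(u_i),c(u_{i+1}))$ for every $i$, and then telescopes; this glosses over the fact that the hypothesis only yields, at each $t_0$, a radius $\delta(t_0)$ with no uniform lower bound, so mesh-fineness alone does not obviously guarantee the per-interval inequality, and one also needs the remark that refining a partition increases the sum in order to control the supremum defining $l(c)$. Your connectedness argument (the set $U$ being closed and right-open, resp.\ left-open, at interior points) replaces both steps at once: it upgrades the pointwise one-sided hypothesis to the two-point inequality $d(c(u),c(v))\le \epsilon_0^{-1}\bigl(d(N,c(v))-d(N,c(u))\bigr)$ for all $a\le u\le v\le b$, after which every partition sum telescopes directly, with no fineness or refinement discussion needed. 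You also correctly keep track of the orientation of the asymmetric Finsler triangle inequality in both the forward and backward cases, which is exactly where care is required here; the only implicit point worth noting is that $D_N$ is well defined along $c$ because $c$ is injective, so $d(c(w),c(t))>0$ for $w\ne t$. In short, your argument proves the same telescoping estimate as the paper but supplies the uniformity step the paper leaves implicit.
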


\begin{proof}
From our assumption, for 
 any sufficiently fine  subdivision 
$u_0:=a<u_1<\cdots<u_{n-1}<u_n:=b$
of $[a,b],$
$$d(N,c(u_{i+1}))-d(N,c(u_i))>\epsilon_0d(c(u_{i}),c(u_{i+1}))$$
holds for  each $i=0,1,\dots,n-1.$
Therefore, the length of $c$ is not greater than
\begin{equation*}
\frac{1}{\epsilon_0}\sum_{i=0}^{n-1} ( d(N,c(u_{i+1}))-d(N,c(u_i)) )=\frac{1}{\epsilon_0}(d(N,c(b))-d(N,c(a))).
\end{equation*}

$\qedd$

\end{proof}

\begin{lemma}\label{lem4.4}
Let $[a,b]$ be a subinterval of $[0,1].$
Suppose that there exists a point $p\in M$ such that
for each $t\in [a,b],$ the minimal geodesic segment from $p$ to 
$c(t)$ does not intersect $c[a,b]$ except $c(t)$ and such that
$c[a,b]$ is disjoint from the cut locus of $p$ and $P:=\{p\}.$ 
Suppose that there exists $\epsilon_0\in(0,1)$ such that 
\begin{equation}\label{eq4-1}
\lim_{t\searrow t_0}D_P(c(t_0),c(t))<\epsilon_0,
\end{equation}
 \begin{equation}\label{eq4-2}
({\rm respectively}\quad \lim_{t\nearrow t_0}D_P(c(t),c(t_0))<\epsilon_0)
\end{equation}
\begin{equation}\label{eq4-3}
\lim_{t\searrow t_0} D_P(c(t),c(t_0))<\epsilon_0
\end{equation}
\begin{equation}\label{eq4-4}
({\rm respectively}\quad \quad \lim_{t\nearrow t_0} D_P(c(t_0),c(t))<\epsilon_0)
\end{equation}
for each $t_0\in[a,b)$ (respectively $t_0\in(a,b].$)
Here,
$$D_P(x,y):=\frac{d(p,y)-d(p,x)}{d(x,y)}.$$
Then the curve $c$ is rectifiable on $[a,b].$

\end{lemma}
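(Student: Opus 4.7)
The plan is to parametrize $c$ in polar coordinates centered at $p$, prove that the angular coordinate along $c$ is strictly monotone, and then bound the length of $c$ by a uniform multiple of its total angular variation, which by monotonicity collapses to $|\theta(b)-\theta(a)|$.

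\textbf{Polar set-up.} Since $c[a,b]$ is compact and disjoint from $\{p\}\cup\Cut(p)$, the map $\alpha(r,\theta):=\exp_p(rv(\theta))$ from Lemma \ref{lem4.2} is a smooth diffeomorphism from an open subset of $(0,\infty)\times \R$ onto an open neighborhood of $c[a,b]$. After continuously lifting the angular coordinate to $\R$, we obtain continuous functions $r,\theta\colon [a,b]\to\R$ with $c(t)=\alpha(r(t),\theta(t))$ and $r(t)=d(p,c(t))$.

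\textbf{Monotonicity of $\theta$.} Suppose for contradiction that $\theta(s)=\theta(t)$ for some $s<t$ in $[a,b]$ (so $c(s)$ and $c(t)$ lie on the same radial geodesic from $p$). Since $c$ is injective we must have $r(s)\ne r(t)$; assume $r(s)<r(t)$. Then the minimal geodesic from $p$ to $c(t)$, being the initial portion of the radial ray $r\mapsto\alpha(r,\theta(t))$, passes through $c(s)\in c[a,b]\setminus\{c(t)\}$, contradicting the hypothesis. Hence $\theta$ is injective, and being continuous, strictly monotone.

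\textbf{Per-step distance bound.} By compactness of $[a,b]$, the hypothesis furnishes a uniform $\delta>0$ such that for any partition $a=t_0<\cdots<t_n=b$ of mesh smaller than $\delta$, both $D_P(c(t_{i-1}),c(t_i))<\epsilon_0$ and $D_P(c(t_i),c(t_{i-1}))<\epsilon_0$ hold for every adjacent pair. Writing $\Delta r_i:=r(t_i)-r(t_{i-1})$, $\Delta\theta_i:=\theta(t_i)-\theta(t_{i-1})$, consider two cases. If $\Delta r_i\ge 0$, concatenate the forward radial segment from $c(t_{i-1})$ to $\alpha(r(t_i),\theta(t_{i-1}))$ (of length $\Delta r_i$) with the angular arc $\theta\mapsto\alpha(r(t_i),\theta)$ to $c(t_i)$ (of length $\le \mathcal C(a)|\Delta\theta_i|$ by Lemma \ref{lem4.2}); since $\Delta r_i\le\epsilon_0\,d(c(t_{i-1}),c(t_i))$, we obtain
\[
d(c(t_{i-1}),c(t_i))\le \Delta r_i+\mathcal C(a)|\Delta\theta_i|\le \epsilon_0\,d(c(t_{i-1}),c(t_i))+\mathcal C(a)|\Delta\theta_i|,
\]
hence $d(c(t_{i-1}),c(t_i))\le \frac{\mathcal C(a)}{1-\epsilon_0}|\Delta\theta_i|$. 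If $\Delta r_i<0$, the analogous broken path starting at $c(t_i)$ (forward radially, then angularly) and the bound $|\Delta r_i|\le\epsilon_0\,d(c(t_i),c(t_{i-1}))$ give $d(c(t_i),c(t_{i-1}))\le \frac{\mathcal C(a)}{1-\epsilon_0}|\Delta\theta_i|$, whereupon Lemma \ref{lem4.1} yields $d(c(t_{i-1}),c(t_i))\le \lambda(a)\,\frac{\mathcal C(a)}{1-\epsilon_0}|\Delta\theta_i|$.

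\textbf{Conclusion and main obstacle.} Summing over $i$ and using monotonicity of $\theta$ to telescope $\sum_i|\Delta\theta_i|=|\theta(b)-\theta(a)|$ gives
\[
l(c)\le \frac{\lambda(a)\,\mathcal C(a)}{1-\epsilon_0}\,|\theta(b)-\theta(a)|<\infty,
\]
so $c$ is rectifiable on $[a,b]$. The main obstacle is the monotonicity of the angular coordinate: it is precisely the hypothesis that minimal geodesics from $p$ do not re-enter $c[a,b]$ that makes the lifted $\theta$ injective, and without it the angular variation could be unbounded, wrecking the entire estimate. The remaining technical wrinkle is the backward-radial case caused by non-reversibility, but it is dispatched cleanly by estimating the backward distance via a polar broken path and then invoking the quasi-symmetry of Lemma \ref{lem4.1}.
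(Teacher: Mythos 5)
Your proof is correct and follows essentially the same route as the paper's: parametrize $c$ in polar coordinates about $p$, bound each chord $d(c(t_{i-1}),c(t_i))$ by a radial-plus-angular broken path whose angular part is controlled by Lemma \ref{lem4.2}, absorb the radial increment using the hypothesis $D_P<\epsilon_0$, treat the decreasing-radius case via the backward distance and Lemma \ref{lem4.1}, and sum the telescoping angular increments. The only substantive difference is that you explicitly verify the injectivity (hence monotonicity) of the lifted angle $\theta$, a point the paper passes over when it simply asserts that $c$ ``is parametrized by $\theta$.''
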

\begin{proof}

Let $v(\theta)$ denote a curve emanating from $v_0:=\frac{1}{F(\exp_p^{-1}(c(a)))} \exp_p^{-1}(c(a))$ in $S_p M.$ Here the parameter $\theta$ denotes the oriented  Euclidean angle measured from $v_0$ to $v(\theta).$ 
By the assumption of our lemma,
the curve $c$ is parametrized by $\theta$ ;
$$m(\theta)=\exp_p(\rho(\theta) v(\theta)), \quad \theta\in[0,\theta_0].$$
Here $\rho(\theta)=F(\exp_p^{-1}(c(t))),$  $v(\theta)=\frac{1}{\rho(\theta)}\exp_p^{-1}(c(t)),$ 
$m(0)=c(a),$ and $m(\theta_0)=c(b).$
From \eqref{eq4-1}, \eqref{eq4-2}, \eqref{eq4-3} and \eqref{eq4-4},
 it follows that for  any sufficiently fine subdivision 
$u_0:=0<u_1<u_2<\cdots<u_n:=\theta_0$
of $[0,\theta_0],$
\begin{equation}\label{eq4-5}
D_P(m(u_{i}),m(u_{i+1}) )   <\epsilon_0
\end{equation}
and 
\begin{equation}\label{eq4-6}
D_P(m(u_{i+1}),m(u_i))<\epsilon_0
\end{equation}
hold
for each $i=0,1,2,\cdots,n-1.$

Suppose first that 
$$l_i:=d(p,m(u_i))\leq l_{i+1}:=d(p,m(u_{i+1}))$$
for some fixed $i.$
By the triangle inequality,
\begin{equation}\label{eq4-7}
d(m(u_i),m(u_{i+1} )  )\leq d(m(u_i),\gamma_{i+1}(l_i))+l_{i+1}-l_i.
\end{equation}
Here $\gamma_{i+1} :[0,l_{i+1}]\to M$ denotes
the geodesic $\exp_p(t v(u_{i+1})).$ 

By applying Lemma \ref{lem4.2} to 
 the curve $\{\exp_p(l_i v(\theta))| u_i\leq \theta\leq u_{i+1}\},$
we get, by Lemma \ref{lem4.3},
\begin{equation}\label{eq4-8}
d(m(u_i),\gamma_{i+1}(l_i))\leq {\mathcal C}(\overline a)(u_{i+1}-u_i),
\end{equation}
where $\overline a :=\max\{d(p,c(t))\: | \: a\leq t\leq b\}.$
Combining  (\ref{eq4-5}), (\ref{eq4-7}) and (\ref{eq4-8}) we obtain
\begin{equation}\label{eq4-9}
d(m(u_i),m(u_{i+1}))\leq \frac{{\mathcal C}(\overline a)} {1-\epsilon_0} (u_{i+1}-u_i)
\end{equation}
for any $i$ with $d(p,m(u_i))\leq d(p,m(u_{i+1})).$

Suppose second that 
$$l_i=d(p,m(u_i))>l_{i+1}=d(p,m(u_{i+1}))$$ for some fixed $i.$
Then, by {a similar argument as} above,
we get
\begin{equation}\label{eq4-10}
d(m(u_{i+1}),m(u_i))\leq {\mathcal C}({\overline a} )(u_{i+1}-u_i)+
l_i-l_{i+1}.
\end{equation}
Combining (\ref{eq4-6}) and (\ref{eq4-10}), we obtain
$$ d(m(u_{i+1}),m(u_i)) \leq   \frac{ {\mathcal C}({\overline a})}   {1-\epsilon_0} (u_{i+1}-u_i).$$
Hence,
by Lemma \ref{lem4.1},
$$d(m(u_i),m(u_{i+1}))\leq \frac{\lambda(\overline a)}{1-\epsilon_0}{\mathcal C}(\overline a)(u_{i+1}-u_i)$$
for any $i$ with $d(p,m(u_i))>d(p,m(u_{i+1})).$
Therefore, the length $l(c)$ of $c$ does not exceed 
\begin{equation}\label{eq4-11}
\frac{\lambda(\overline a)}{1-\epsilon_0}{\mathcal C}(\overline a)
\theta_0,
\end{equation}
i.e.,   the curve $c$ is rectifiable. 
$\qedd$
\end{proof}

\section{Fundamental properties of
 a Jordan arc in the cut locus }\label{sec5}

Let us recall that $c:[0,1]\to C_N$ is a Jordan arc in the cut locus of the closed subset $N.$
For each $t\in [0,1)$ (respectively $t\in (0,1]$), let $\Sigma^+_{c(t)}$ (respectively $\Sigma^-_{c(t)}$) denote the sector at $c(t)$ that contains $c(t,t+\delta)$ (respectively $c(t-\delta,t)$) for some small $\delta>0$. 
Let $\alpha_t^+$ and $\beta_t^+$ (respectively $\alpha_t^-$ and $\beta_t^-$) 
denote the unit speed $N$-segments to $c(t)$ that form part of the boundary of $\Sigma^+_{c(t)}$ (respectively $\Sigma^-_{c(t)}$).
Notice that for each $t\in (0,1),$ $\alpha_t^+\ne\beta_t^+,$ and $\alpha_t^-\ne\beta_t^-.$

Then, with the notations above, we have the following important result.

\begin{proposition}\label{prop5.1}
Suppose that $\alpha_{t}^+\ne\beta_{t}^+$ for $t=0.$ Then
for each $t_0\in [0,1),$ 
 the following limits from the right exist:
\begin{equation}\label{eq5-1}
v^f(t_0)^+:=\lim_{t\searrow t_0}\frac{1}{F(\exp^{-1}_{c(t_0)}c(t))}\exp^{-1}_{c(t_0)}(c(t))
\end{equation}
and
\begin{equation}\label{eq5-2}
v^b(t_0)^+:=\lim_{t\searrow t_0}\frac{1}{F(\exp^{-1}_{c(t)}c(t_0))}\exp^{-1}_{c(t)}(c(t_0)).
\end{equation}
\end{proposition}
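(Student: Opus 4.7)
The plan is to show, using compactness of the Finsler unit sphere $S_{c(t_0)}M$ in the two-dimensional tangent space $T_{c(t_0)}M$, that all subsequential limits of the normalized vectors
$$v_n:=\frac{1}{F(\exp^{-1}_{c(t_0)}(c(t_n)))}\exp^{-1}_{c(t_0)}(c(t_n))$$
along any sequence $t_n\searrow t_0$ agree. Existence of subsequential limits is automatic from compactness, so the content is uniqueness. The backward limit $v^b(t_0)^+$ is then treated analogously using Proposition \ref{prop2.2} in place of Proposition \ref{prop2.1}.

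Given any subsequential limit $v\in S_{c(t_0)}M$ coming from $t_n\searrow t_0$, I would pass to further subsequences and select two families of $N$-segments $\gamma_n,\gamma'_n\in\Gamma_N(c(t_n))$ whose endpoint velocities converge respectively to $w_1:=\dot\alpha^+_{t_0}(l_0)$ and $w_2:=\dot\beta^+_{t_0}(l_0)$. The sector structure forces these to be the only possible limits of endpoint velocities of $N$-segments to $c(t_n)$: since $c(t_n)\in\Sigma^+_{c(t_0)}$ for $n$ large and the sector is bounded near $c(t_0)$ precisely by $\alpha^+_{t_0}$ and $\beta^+_{t_0}$, any limit $N$-segment at $c(t_0)$ must be one of these two (otherwise $\gamma_n$ would have to cross the interior of some $N$-segment to $c(t_0)$ for large $n$, contradicting the minimality of $N$-segments). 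Applying Proposition \ref{prop2.1} along each family then yields
$$g_{w_1}(v,w_1)=g_{w_2}(v,w_2)=\min\{g_{\dot\gamma(l_0)}(v,\dot\gamma(l_0))\mid\gamma\in\Gamma_N(c(t_0))\}.$$

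Under the hypothesis $w_1\ne w_2$, the Legendre-dual covectors $g_{w_i}(\cdot,w_i)$ are also distinct, so the constraint $g_{w_1}(v,w_1)=g_{w_2}(v,w_2)$ is a nontrivial linear condition cutting out a one-dimensional subspace of $T_{c(t_0)}M$. This subspace meets the strictly convex curve $S_{c(t_0)}M$ in exactly two points, and these two points lie in the two arcs of $S_{c(t_0)}M$ separated by $-w_1$ and $-w_2$. Because $v$ is a limit of directions pointing into the open sector $\Sigma^+_{c(t_0)}$, it belongs to the closed arc corresponding to $\overline{\Sigma^+_{c(t_0)}}$, which is exactly one of these two arcs; hence $v$ is uniquely determined, and all subsequential limits coincide.

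The main obstacle is the second step: for the same sequence $\{t_n\}$, realizing both $w_1$ and $w_2$ as endpoint-velocity limits of $N$-segments to $c(t_n)$, especially at those $t_n$ where $c(t_n)$ admits only a single $N$-segment. This would be handled by replacing such $t_n$ by nearby $t'_n$ at which $c(t'_n)$ admits two $N$-segments; such $t'_n$ are dense by Proposition \ref{prop2.5}, and one must verify that the corresponding direction vectors $v_n,v'_n$ remain close enough that the subsequential-limit analysis is unaffected. At those values of $t_0$ where $\alpha^+_{t_0}=\beta^+_{t_0}$, i.e.\ $c(t_0)$ has a unique $N$-segment, the uniqueness argument above does not apply directly and requires a separate treatment, presumably using the local tree structure established in Theorem \ref{th3.6}.
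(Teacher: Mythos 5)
Your outline follows essentially the same route as the paper: compactness of $S_{c(t_0)}M$ reduces everything to uniqueness of subsequential limits; Proposition \ref{prop2.1} (resp.\ \ref{prop2.2}) applied to two families of $N$-segments to $c(t)$ converging to $\alpha^+_{t_0}$ and $\beta^+_{t_0}$ gives $g_X(X,v)=g_Y(Y,v)$ for any such limit $v$; and since $X\ne Y$ this equation cuts out a one-dimensional subspace meeting the indicatrix in two points, exactly one of which is compatible with the sector constraint (your sign check at the normalized $-w_1,-w_2$, via $g_Y(Y,X)<1$ for $X\ne Y$, is the correct way to see this). The two ``obstacles'' you flag do not actually require the patches you propose: the proposition's hypothesis covers $t_0=0$, and the observation recorded just before Proposition \ref{prop5.1} (that $\alpha_t^+\ne\beta_t^+$ and $\alpha_t^-\ne\beta_t^-$ for every $t\in(0,1)$) means every $c(t)$ with $t\in(0,1)$ already carries two distinct boundary $N$-segments of the sector facing $c(t_0)$, which converge to $\alpha^+_{t_0}$ and $\beta^+_{t_0}$ -- exactly the fact the paper reuses in Lemma \ref{lem5.4}; in particular no detour through Proposition \ref{prop2.5} is needed (and that detour would be delicate anyway, since it gives density of two-segment points in $C_N$, not along the arc $c$), and the case $\alpha^+_{t_0}=\beta^+_{t_0}$ simply does not arise for $t_0\in[0,1)$ under the stated hypothesis.
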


\begin{proof}
Since $S_{c(t_0)}M:=\{v\in T_{c(t_0)}M\; | \; F(v)=1\}$ is compact,
any sequence
$$\{ \frac{1} {F(\exp_{c(t_0)} ^{-1}c(t_0+\epsilon_i))}   \exp_{c(t_0)} ^{-1}  c(t_0+\epsilon_i)   \}$$ 
has a limit,
where
$\{\epsilon_i\}$ denotes a sequence of positive numbers convergent to zero. Let $v_0\in S_{c(t_0)}M$ be a limit of the sequence above. By choosing a subsequence, we may assume that the sequence has  a unique limit.
From Proposition \ref{prop2.1},
 it follows that 
$$
\lim_{i\to \infty}\frac{d(N,c(t_0+\epsilon_i))-d(N,c(t_0))}{d(c(t_0),c(t_0+\epsilon_i))}=g_{X}(X,v_0)=g_Y(Y,v_0),
$$
where $X$ and $Y$ are the tangent vectors of the unit speed $N$-segment $\alpha^+_{t_0}$ and $\beta^+_{t_0}$ at $c(t_0)$, respectively.
Since $X\ne Y,$
the space 
$\{Z\in T_{c(t_0)}M\; | \; g_X(X,Z)=g_Y(Y,Z)\}$
is a 1-dimensional linear subspace of $T_{c(t_0)}M.$
On the other hand,
it is clear that any limits of  
$\displaystyle\frac{1}{F(\exp_{c(t_0)}^{-1}c(t) )}  \exp_{c(t_0)}^{-1} c(t) $
as $t\searrow t_0$
lie in the common subarc $J^+(X,Y)$  of $S_{c(t_0)}M$ with endpoints $X,Y.$
Hence, the limit $v_0$ is the unique element
of $J^+(X,Y)\cap\{Z\in T_{c(t_0)}M\; | \; g_X(X,Z)=g_Y(Y,Z)\}. $
This implies
that the limit \eqref{eq5-1} exists.
By applying Proposition \ref{prop2.2}, we  can easily see that the limit \eqref{eq5-2} exists.
$\qedd$
\end{proof}

By reversing the parameter of $c$ in Proposition \ref{prop5.1}, we have the following proposition.
\begin{proposition}\label{prop5.2} 
Suppose that $\alpha_{t}^+\ne\beta_{t}^+$ for $t=1.$
Then for each $t_0\in (0,1],$  
 the following limits from the left exist:
\begin{equation*}
v^f(t_0)^-:=\lim_{t\nearrow t_0}\frac{1}{F(\exp^{-1}_{c(t_0)}c(t))}\exp^{-1}_{c(t_0)}(c(t))
\end{equation*} 
and
\begin{equation*}
v^b(t_0)^-:=\lim_{t\nearrow t_0}\frac{1}{F(\exp^{-1}_{c(t)}c(t_0))}\exp^{-1}_{c(t)}(c(t_0)).
\end{equation*} 
\end{proposition}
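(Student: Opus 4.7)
The plan is to derive Proposition 5.2 directly from Proposition 5.1 by reversing the parametrization of the Jordan arc $c$. Define $\tilde{c}\colon[0,1]\to C_N$ by $\tilde{c}(s):=c(1-s)$; this is again a Jordan arc in the cut locus of $N$, so Proposition 5.1 applies to $\tilde{c}$ once its hypothesis has been verified.

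The crucial bookkeeping step is to check that the sectors and their bounding $N$-segments transform correctly under the reparametrization. Fix $s_0\in[0,1)$ and set $t_0:=1-s_0\in(0,1]$, so that $\tilde{c}(s_0)=c(t_0)$. For small $\delta>0$ one has $\tilde{c}(s_0,s_0+\delta)=c(t_0-\delta,t_0)$; hence the sector at $\tilde{c}(s_0)$ that contains the forward trace of $\tilde{c}$ coincides with the sector at $c(t_0)$ that contains the backward trace of $c$, i.e.\ $\Sigma^+_{\tilde{c}(s_0)}=\Sigma^-_{c(t_0)}$. The two bounding $N$-segments of this sector are intrinsic to the point and the sector, so the pair $\{\alpha^+_{s_0},\beta^+_{s_0}\}$ associated with $\tilde{c}$ coincides (up to relabeling) with the pair $\{\alpha^-_{t_0},\beta^-_{t_0}\}$ associated with $c$. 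In particular, the hypothesis of Proposition 5.1 applied to $\tilde{c}$ at $s=0$ is precisely the hypothesis of Proposition 5.2 at $t=1$.

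Proposition 5.1 applied to $\tilde{c}$ then yields, for every $s_0\in[0,1)$, the existence of
$$\lim_{s\searrow s_0}\frac{1}{F(\exp^{-1}_{\tilde{c}(s_0)}\tilde{c}(s))}\exp^{-1}_{\tilde{c}(s_0)}(\tilde{c}(s))\qquad\textrm{and}\qquad \lim_{s\searrow s_0}\frac{1}{F(\exp^{-1}_{\tilde{c}(s)}\tilde{c}(s_0))}\exp^{-1}_{\tilde{c}(s)}(\tilde{c}(s_0)).$$
Substituting $s=1-t$, $s_0=1-t_0$ converts a right limit as $s\searrow s_0$ into a left limit as $t\nearrow t_0$, so these are exactly the limits $v^f(t_0)^-$ and $v^b(t_0)^-$ claimed in the statement, and they exist for every $t_0\in(0,1]$.

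The only real content is the correspondence of sectors verified in the middle paragraph; beyond that the argument invokes Proposition 5.1 as a black box and needs no new Finsler-geometric input, so I do not anticipate any serious obstacle.
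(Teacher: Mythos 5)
Your proposal is correct and is essentially the paper's own argument: the paper disposes of Proposition \ref{prop5.2} with the single remark that it follows ``by reversing the parameter of $c$ in Proposition \ref{prop5.1}.'' Your explicit verification that $\Sigma^+_{\tilde c(s_0)}=\Sigma^-_{c(t_0)}$ (and hence that the stated hypothesis at $t=1$, which properly concerns $\alpha^-_1\ne\beta^-_1$, becomes the hypothesis of Proposition \ref{prop5.1} for the reversed arc) simply spells out the bookkeeping the paper leaves implicit.
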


\begin{lemma}\label{lem5.3}
If   $\Gamma_N(c(0))$ consists of a unique  element $\alpha,$ then
$$\lim_{t\searrow 0}v^f(t)^+=\lim_{t\searrow 0}v^b(t)^-=\dot\alpha(l),
$$
where $l=d(N,c(0)).$
\end{lemma}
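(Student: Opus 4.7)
I plan to establish $\lim_{t\searrow 0}v^f(t)^+=\dot\alpha(l)$; the equality $\lim_{t\searrow 0}v^b(t)^-=\dot\alpha(l)$ follows by the symmetric argument, using Proposition \ref{prop5.2} and the backward first variation formula Proposition \ref{prop2.2} in place of Proposition \ref{prop5.1} and Proposition \ref{prop2.1}.

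Write $X_t:=\dot\alpha_t^+(l(t))$ and $Y_t:=\dot\beta_t^+(l(t))$ for the two boundary tangents at $c(t)$. The first step is to show $X_t,Y_t\to\dot\alpha(l)$ in any fixed local trivialization of $TM$ around $c(0)$: since $l(t)=d(N,c(t))\to l$ and the families of unit-speed $N$-segments of bounded length ending near $c(0)$ are $C^1$-precompact by forward completeness, every subsequential limit of $\alpha_t^+$ or $\beta_t^+$ is an $N$-segment to $c(0)$, which must equal $\alpha$ by the hypothesis $\Gamma_N(c(0))=\{\alpha\}$. By the characterization extracted from the proof of Proposition \ref{prop5.1}, $v^f(t)^+$ is the unique element of the subarc $J^+(X_t,Y_t)\subset S_{c(t)}M$ belonging to the hyperplane $\{Z\in T_{c(t)}M:g_{X_t}(X_t,Z)=g_{Y_t}(Y_t,Z)\}$.

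As $t\searrow 0$, of the two arcs of $S_{c(t)}M$ with endpoints $X_t,Y_t$ one collapses to $\{\dot\alpha(l)\}$ (``short arc'') while the other grows to $S_{c(0)}M\setminus\{\dot\alpha(l)\}$ (``long arc''). If $J^+(X_t,Y_t)$ is eventually the short arc, the squeeze principle immediately gives $v^f(t)^+\to\dot\alpha(l)$. Otherwise, pick a subsequence $t_i\searrow 0$ with $v^f(t_i)^+\to w_0\ne\dot\alpha(l)$; dividing the bisector identity $g_{X_i}(X_i,v^f(t_i)^+)=g_{Y_i}(Y_i,v^f(t_i)^+)$ by $|X_i-Y_i|$ and passing to the limit yields a non-trivial linear equation on $w_0$. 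Any limit direction $f$ of $(X_i-Y_i)/|X_i-Y_i|$ is tangent to $S_{c(0)}M$ at $\dot\alpha(l)$ by the unit-norm constraint on $X_i,Y_i$, so $g_{\dot\alpha(l)}(\dot\alpha(l),f)=0$; a short Cartan tensor calculation (using the homogeneity identity $\partial_V[g_y(y,y)]=2g_y(V,y)$, hence $C(f,\dot\alpha(l),\dot\alpha(l))=0$) shows that $\dot\alpha(l)$ itself lies in the kernel of the limiting equation, so the kernel is exactly $\mathbb R\dot\alpha(l)$. Its intersection with $S_{c(0)}M$ being $\{\dot\alpha(l),\,-\dot\alpha(l)/F(-\dot\alpha(l))\}$, this forces $w_0=-\dot\alpha(l)/F(-\dot\alpha(l))$.

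The crux is then to exclude this spurious limit. For each $i$ I would pick $s_i>t_i$ with $s_i\to 0$ realizing the defining limit of $v^f(t_i)^+$ to within $1/i$; transporting vectors to a fixed chart at $c(0)$ via the Finsler exponential and using the comparison estimates of Lemmas \ref{lem4.1} and \ref{lem4.2}, one checks that in this spurious case $c(t_i)$ must approach $c(0)$ along the interior of the $N$-segment $\alpha$, namely $c(t_i)=\alpha(l-\lambda_i)+o(\lambda_i)$ for some $\lambda_i\searrow 0$. The hard part is then to derive a contradiction with $|\Gamma_N(c(t_i))|\ge 2$: since by Example \ref{Ex2.6} cut points can in general accumulate on the interior of an $N$-segment, one cannot merely invoke openness of the set where $d_N$ is differentiable, but must rather exploit the quantitative $o(\lambda_i)$ closeness together with the forced $C^1$-convergence of both $\alpha_{t_i}^+$ and $\beta_{t_i}^+$ to $\alpha$ to show that the simultaneous existence of two distinct $N$-segments of length $\approx l-\lambda_i$ terminating at $c(t_i)$ is incompatible with the uniqueness of the $N$-segment at $\alpha(l-\lambda_i)$ guaranteed by Theorem \ref{th2.3}. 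This local-uniqueness step is the main obstacle I foresee; once it is in place, the short-arc case yields the required convergence, and the symmetric argument then gives $\lim_{t\searrow 0}v^b(t)^-=\dot\alpha(l)$.
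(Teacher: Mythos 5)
Your opening steps reproduce the paper's proof: the paper also obtains $X_t:=\dot\alpha_t^+(d(N,c(t)))\to\dot\alpha(l)$ and $Y_t:=\dot\beta_t^+(d(N,c(t)))\to\dot\alpha(l)$ from compactness of the family of $N$-segments together with $\Gamma_N(c(0))=\{\alpha\}$, and it also invokes the characterization of $v^f(t)^+$ as the unique point of $J^+(X_t,Y_t)\cap\{Z\mid g_{X_t}(X_t,Z)=g_{Y_t}(Y_t,Z)\}$ extracted from the proof of Proposition \ref{prop5.1}. But the paper then finishes in one line: $J^+(X_t,Y_t)$ is, by the convention of Proposition \ref{prop5.1}, the subarc of $S_{c(t)}M$ \emph{with endpoints} $X_t,Y_t$ on the $\Sigma^+_{c(t)}$ side, so it collapses to the single point $\dot\alpha(l)$ as soon as its endpoints merge, and $v^f(t)^+$ is squeezed. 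In other words, only your ``short arc'' branch ever occurs in the paper's argument; the membership of $v^f(t)^+$ in the collapsing arc is treated as already settled, and the same squeeze handles $v^b(t)^-$ via Propositions \ref{prop2.2} and \ref{prop5.2}.

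The genuine gap in your proposal is therefore exactly the one you flag yourself. Your analysis of the ``long arc'' branch is correct as far as it goes: dividing the bisector identity by $|X_i-Y_i|$ and using $\tfrac{d}{ds}g_{y+sV}(y+sV,w)\big|_{s=0}=g_y(V,w)$ gives the limiting equation $g_{\dot\alpha(l)}(f,w_0)=0$ with $g_{\dot\alpha(l)}(\dot\alpha(l),f)=0$, whose kernel is $\mathbb{R}\dot\alpha(l)$, so the only spurious candidate is $-\dot\alpha(l)/F(-\dot\alpha(l))$. But the exclusion of that candidate is not carried out, and the route you sketch cannot work as stated: Theorem \ref{th2.3} is a pointwise statement with no stability under perturbation, and, as you yourself observe via Example \ref{Ex2.6}, points with two $N$-segments may accumulate on the interior of an $N$-segment, so no amount of ``$o(\lambda_i)$ closeness'' of $c(t_i)$ to $\alpha(l-\lambda_i)$ alone yields a contradiction. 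What actually has to be proved is that the limit directions from $c(t)$ toward $c(u)$, $u\searrow t$, lie in the subarc of $S_{c(t)}M$ bounded by the \emph{forward} tangents $X_t,Y_t$ rather than near the incoming directions; this is a statement about where the continuation $c(t,1]$ of the Jordan arc can sit inside the sector $\Sigma^+_{c(t)}$, whose boundary $N$-segments $\alpha_t^+,\beta_t^+$ the arc cannot cross (their interior points are not cut points) and which both converge to $\alpha$. The natural way to finish is topological --- in the spurious case the compact set $c[0,1]$ would be forced to accumulate on an interior point of $\alpha$, which is not a cut point --- and nothing of this kind appears in your proposal. As written, the proof is incomplete.
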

\begin{proof}
From the proof of Proposition \ref{prop5.1},
$v^f(t)^+$ (respectively $v^b(t)^-$)  is the unique element of the set
$J^+(X_t,Y_t)\cap \{Z\in S_{c(t)}M\; | \; g_{X_t}(X_t,Z)= g_{Y_t}(Y_t,Z)\}.$
Here $X_t=\dot\alpha_t^+(d(N,c(t)))$ (respectively $X_t=\dot\alpha_t^-(d(N,c(t)))$ ) and $Y_t=\dot\beta_t^+(d(N,c(t)))$(respectively $Y_t=\dot\beta_t^-(d(N,c(t)))$ ).
Since $\alpha$ is the unique element of $\Gamma_N(c(0)),$
the arc $J^+(X_t,Y_t)$ shrinks to $\dot\alpha(l)$ as $t\searrow 0.$
 Therefore,
$\lim_{t\searrow 0} v^f(t)^+=\lim_{t\searrow 0} v^b(t)^-=\dot\alpha(l).$

$\qedd$
\end{proof}
\begin{lemma}\label{lem5.4}
Suppose that $\alpha_{t}^+\ne\beta_{t}^+$ for $t=0.$ Then
 for each $t_0\in[0,1)$ 
$$
\lim_{t\searrow {t_0}}v^f(t)^+=v^f(t_0)^+,\quad  \textrm{and} \quad
\lim_{t\searrow{ t_0}}v^f(t)^-=\frac{-1}{F(-v^f(t_0)^+)} v^f(t_0)^+.$$
\end{lemma}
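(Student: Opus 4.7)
For any sequence $t_k\searrow t_0$, extract subsequences so that $v^f(t_k)^+$, $v^f(t_k)^-$ and the endpoint tangent vectors of the bounding $N$-segments $\alpha_{t_k}^\pm,\beta_{t_k}^\pm$ all converge in the tangent bundle $TM$; denote the latter four limits in $T_{c(t_0)}M$ by $X^\pm_\infty,Y^\pm_\infty$. By the uniqueness characterizations extracted in the proofs of Propositions \ref{prop5.1} and \ref{prop5.2}, combined with continuity of $g$ and $F$, each subsequential limit $v^\pm:=\lim v^f(t_k)^\pm$ is uniquely determined as the element of a specific arc of $S_{c(t_0)}M$ lying on the hyperplane $\{w:g_{X^\pm_\infty}(X^\pm_\infty,w)=g_{Y^\pm_\infty}(Y^\pm_\infty,w)\}$. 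The proof therefore reduces to identifying $X^\pm_\infty,Y^\pm_\infty$ together with the correct arcs.

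For the $+$ family, $\alpha_{t_k}^+,\beta_{t_k}^+$ bound $\Sigma^+_{c(t_k)}$, which contains the forward continuation of $c$ at $c(t_k)$. Since $c(t_k)\to c(t_0)$ within $\Sigma^+_{c(t_0)}$ and the forward arcs limit appropriately, the continuity of the left-right labeling in two dimensions forces $X^+_\infty=X:=\dot\alpha_{t_0}^+(l_0)$ and $Y^+_\infty=Y:=\dot\beta_{t_0}^+(l_0)$, with $l_0:=d(N,c(t_0))$. The limit $v^+$ then satisfies the defining conditions of $v^f(t_0)^+$, so Proposition \ref{prop5.1}'s uniqueness gives $v^+=v^f(t_0)^+$, proving the first equality.

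For the second equality, I first verify the auxiliary identity $v^b(t_0)^+=-v^f(t_0)^+/F(-v^f(t_0)^+)$ by a computation in an exponential chart at $p:=c(t_0)$: setting $\tilde c(t):=\exp_p^{-1}c(t)$, the Taylor expansion $\exp_{c(t)}^{-1}c(t_0)=-\tilde c(t)+O(|\tilde c(t)|^2)$ together with $\tilde c(t)/F(\tilde c(t))\to v^f(t_0)^+$ and the positive homogeneity of $F$ yields the identity after normalizing by the Finsler norm at $c(t)$ (whose base-point error vanishes as $t\to t_0^+$). Then, for the $-$ family, $\alpha_{t_k}^-,\beta_{t_k}^-$ bound $\Sigma^-_{c(t_k)}$, which for $t_k>t_0$ near $t_0$ contains the subarc $c([t_0,t_k])$ and in particular the point $c(t_0)$; hence $X^-_\infty,Y^-_\infty$ are the tangents at $c(t_0)$ of the $N$-segments bounding the sector of $T_{c(t_0)}M$ which contains the direction $-v^f(t_0)^+$. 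An application of Proposition \ref{prop2.2} to sequences of $N$-segments whose tangents converge to $X^-_\infty$ and to $Y^-_\infty$ shows, via the symmetry and bilinearity of $g$, that $v^b(t_0)^+$ satisfies $g_{X^-_\infty}(X^-_\infty,v^b(t_0)^+)=g_{Y^-_\infty}(Y^-_\infty,v^b(t_0)^+)$. Since $v^f(t_k)^-$ points into $\Sigma^-_{c(t_k)}$, both $v^-$ and $v^b(t_0)^+$ lie in the arc $J^-(X^-_\infty,Y^-_\infty)$ containing $-v^f(t_0)^+$, so uniqueness forces $v^-=v^b(t_0)^+$, which equals the claimed vector by the auxiliary identity.

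The principal technical obstacle is the sector-continuity identification of $X^-_\infty,Y^-_\infty$ in the preceding paragraph: one must verify that the bounding $N$-segments of $\Sigma^-_{c(t_k)}$ cluster, as $t_k\to t_0^+$, around the sector at $c(t_0)$ containing $-v^f(t_0)^+$ rather than around any other sector cut out by $\Gamma_N(c(t_0))$. This rests on the two-dimensional combinatorial structure of sectors at a cut point together with the observation $c([t_0,t_k])\subset\Sigma^-_{c(t_k)}$ for small $t_k-t_0$, which pins down on which side of each boundary $N$-segment the shrinking arc lies.
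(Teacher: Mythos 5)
Your treatment of the first limit is essentially the paper's argument: you pass to the limit in the characterization of $v^f(t)^+$ as the unique point of the arc between $X_t=\dot\alpha_t^+$ and $Y_t=\dot\beta_t^+$ lying on $\{Z:g_{X_t}(X_t,Z)=g_{Y_t}(Y_t,Z)\}$, and the ``labeling continuity'' you invoke is exactly the paper's assertion $\lim_{t\searrow t_0}\alpha_t^+=\alpha_{t_0}^+$, $\lim_{t\searrow t_0}\beta_t^+=\beta_{t_0}^+$. Your auxiliary identity $v^b(t_0)^+=-v^f(t_0)^+/F(-v^f(t_0)^+)$ is also correct and provable as you sketch, and the application of Proposition \ref{prop2.2} with two sequences of $N$-segments is sound.

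The genuine gap is the identification of $X^-_\infty,Y^-_\infty$, which you single out as the principal obstacle and then resolve incorrectly. The segments $\alpha^-_{t_k},\beta^-_{t_k}$ are $N$-segments ending at $c(t_k)\in\Sigma^+_{c(t_0)}$; since an $N$-segment to $c(t_k)$ can neither meet an $N$-segment to $c(t_0)$ at a point interior to both nor pass through $c(t_0)$ (which admits at least two $N$-segments, by hypothesis when $t_0=0$), its portion inside the convex ball stays in the closure of $\Sigma^+_{c(t_0)}$. Hence every subsequential limit of $\alpha^-_{t_k}$ or $\beta^-_{t_k}$ is one of the two segments bounding $\Sigma^+_{c(t_0)}$, i.e. $X^-_\infty,Y^-_\infty\in\{X_{t_0},Y_{t_0}\}$; this is precisely the paper's claim $\lim_{t\searrow t_0}\alpha_t^-=\alpha_{t_0}^+$, $\lim_{t\searrow t_0}\beta_t^-=\beta_{t_0}^+$. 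Your claim that they are instead the tangents of the segments bounding ``the sector at $c(t_0)$ containing $-v^f(t_0)^+$'' is false whenever $c(t_0)$ admits more than two $N$-segments, and that sector need not even exist: for $N=\{(0,2),(0,-2),(-2,0)\}$ in the Euclidean plane and $c(t)=(t,0)$, $t_0=0$, the direction $-v^f(0)^+=(-1,0)$ points along the reversed third $N$-segment, while $\alpha^-_{t},\beta^-_{t}$ converge to the two vertical segments, i.e. to $\alpha^+_0,\beta^+_0$. The fact $c([t_0,t_k])\subset\Sigma^-_{c(t_k)}$ that you cite does not yield your identification; the location of the endpoint $c(t_k)$ does, and it yields the paper's. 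The error is not cosmetic, since with your $X^-_\infty,Y^-_\infty$ the uniqueness step would pin $v^-$ to the bisector of the wrong pair of segments. The repair is to use the correct limits $X_{t_0},Y_{t_0}$ (and to justify that the two limits are distinct and that the arcs $J^-(\dot\alpha^-_{t_k},\dot\beta^-_{t_k})$ converge to $J^-(X_{t_0},Y_{t_0})$, a point both you and the paper leave implicit); then your scheme, with $v^b(t_0)^+$ as an intermediary the paper does not need, reduces to the paper's proof.
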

\begin{proof}
From the proof of Proposition  \ref{prop5.1},
 $v^f(t)^* ,$ where $*$ denotes $ +$ or $ -,$ is the unique element of the set
$$ J^*(X_t,Y_t)\cap \{Z\in T_{c(t)}M\: |\: g_{X_t}(X_t,Z)=g_{Y_t}(Y_t,Z)\}$$
for each $t\in(0,1).$
Here $X_t=\dot\alpha_t^+(d(N,c(t))),$ and $Y_t=\dot\beta_t^+(d(N,c(t))),$
 and $J^-(X_t,Y_t)$ denotes the complementary subarc of $J^+(X_t,Y_t)$ in $S_{c(t_0)}M.$
Since $ \lim_{t\searrow t_0}\alpha_t^*=\alpha_{t_0}^+$ and $\lim_{t\searrow t_0}\beta_t^*=\beta_{t_0}^+,$
we have
$\lim_{t\searrow t_0}X_t=X_{t_0},$ and
 $\lim_{t\searrow t_0}Y_t=Y_{t_0}.$
This implies that $\lim_{t\searrow _{t_0} }v^f(t)^+=v^f(t_0)^+$ and
$\lim_{t\searrow t_0}v^f(t)^-=    \frac{-1}{F(-v^f(t_0)^+)} v^f(t_0)^+.$
$\qedd$
\end{proof}

\begin{lemma}\label{lem5.5}
Suppose that $\alpha_{t}^+\ne\beta_{t}^+$ for $t=0.$ Then
for each $t_0\in [0,1),$  
 the following limits from the right exist:
\begin{equation}\label{eq5-3}
\lim_{t\searrow t_0}D_N(c(t_0),c(t))=g_{w(t_0)^+}(w(t_0)^+,v^f(t_0)^+)
\end{equation}
and
\begin{equation}\label{eq5-4}
\lim_{t\searrow t_0}D_N(c(t),c(t_0))=g_{w(t_0)^+}
(w(t_0)^+,v^b(t_0)^+),
\end{equation}
where 
$$
w(t_0)^+:=\dot \alpha^+_{t_0}(d(N,c(t_0))) \quad\textrm{ or }\quad
 \dot\beta^+_{t_0}(d(N,c(t_0))).
$$
Furthermore, for each compact subinterval $[a,b]\subset [0,1),$ there exists $\epsilon_0\in(0,1)$ such that
$$\lim_{t\searrow t_0}D_N(c(t_0),c(t))<\epsilon_0
\quad{\rm and}\quad                 
\lim_{t\searrow t_0}D_N(c(t),c(t_0))
<\epsilon_0$$
for each $t_0\in [a,b].$
\end{lemma}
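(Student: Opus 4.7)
The plan is to apply the generalized first variation formulas (Propositions 2.1 and 2.2) using the sided limits $v^f(t_0)^+$, $v^b(t_0)^+$ secured by Proposition 5.1, and then to upgrade the resulting pointwise strict inequalities to a uniform bound via compactness together with Lemma 5.4 (and its symmetric analog).

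For the forward formula, fix $t_0\in[0,1)$ and take any sequence $t_i\searrow t_0$. Choose $N$-segments $\gamma_i$ to $c(t_i)$; a subsequence converges to some $\gamma_\infty\in\Gamma_N(c(t_0))$ with terminal velocity $w_\infty$. Proposition 2.1 yields
\begin{equation*}
\lim_{i\to\infty} D_N(c(t_0),c(t_i))=g_{w_\infty}(w_\infty,v^f(t_0)^+)=\min_{w}g_w(w,v^f(t_0)^+),
\end{equation*}
the minimum being taken over unit velocities of $N$-segments in $\Gamma_N(c(t_0))$. Since the right-hand side is independent of the subsequence, the full right-limit exists. To identify it with $g_X(X,v^f(t_0)^+)$ where $X=\dot\alpha_{t_0}^+(\ell)$ and $Y=\dot\beta_{t_0}^+(\ell)$, observe that $c(t_i)$ lies in the open sector $\Sigma^+_{c(t_0)}$ bounded by $\alpha^+_{t_0}$ and $\beta^+_{t_0}$; this forces $\gamma_i$ to approximate a boundary $N$-segment, so $w_\infty\in\{X,Y\}$, and the characterizing identity $g_X(X,v^f(t_0)^+)=g_Y(Y,v^f(t_0)^+)$ from the proof of Proposition 5.1 makes the two choices equivalent. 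The backward formula follows in the same way from Proposition 2.2: since $D_N(c(t),c(t_0))$ is the negative of the ratio appearing there, bilinearity of $g$ absorbs the sign flip together with the $-v^b$ to deliver $g_{w(t_0)^+}(w(t_0)^+,v^b(t_0)^+)$.

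For the uniform estimate on $[a,b]\subset[0,1)$, the pointwise strict inequality is provided by the Finslerian Cauchy--Schwarz inequality (Lemma 1.2.3 of \cite{S}, already used in the proof of Theorem 2.3): $g_X(X,v^f(t_0)^+)\le F(X)F(v^f(t_0)^+)=1$ with equality iff $v^f(t_0)^+=X$, and the latter is impossible because $v^f(t_0)^+$ is the unique element of the open angular arc $J^+(X,Y)$ satisfying $g_X(X,Z)=g_Y(Y,Z)$, which excludes the endpoints $X,Y$ (as $X\ne Y$). To get uniformity, argue by contradiction. If $t_n\in[a,b]$ satisfy $g_{X(t_n)}(X(t_n),v^f(t_n)^+)\to 1$, extract a convergent subsequence $t_n\to t^*\in[a,b]$ and further subsequences so that $\alpha^+_{t_n}\to\alpha^*\in\Gamma_N(c(t^*))$ with velocity $X^*$ and $v^f(t_n)^+\to v^*$. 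Continuity of $g$ gives $g_{X^*}(X^*,v^*)=1$, hence $v^*=X^*$ by Cauchy--Schwarz. When $t_n\searrow t^*$, Lemma 5.4 identifies $v^*=v^f(t^*)^+$, which lies in the interior of $J^+(X(t^*),Y(t^*))$ and so cannot coincide with the velocity of any $N$-segment to $c(t^*)$; contradiction. The case $t_n\nearrow t^*$ is handled analogously using the left-sided analog of Lemma 5.4 obtained by reversing the parameter of $c$ and invoking Proposition 5.2. The argument for the backward quantity is identical.

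The main obstacle is the contradiction step in the left-approach case, since Lemma 5.4 only supplies right-continuity of $v^f(\cdot)^+$. One has to derive the matching left-sided continuity by reversing the parametrization of $c$ and invoking Proposition 5.2; with that in hand, the forbidden coincidence $v^*=X^*$ (which would make an interior point of the angular arc equal to the tangent of an $N$-segment) produces the contradiction and secures the uniform bound $\epsilon_0<1$.
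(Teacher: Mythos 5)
Your derivation of \eqref{eq5-3} and \eqref{eq5-4} follows the paper's route: Propositions \ref{prop2.1} and \ref{prop2.2} applied with the one-sided limits of Proposition \ref{prop5.1}, together with the observation that any limit of $N$-segments to $c(t_i)$, $t_i\searrow t_0$, is one of the two boundary segments of $\Sigma^+_{c(t_0)}$, and that $g_X(X,v^f(t_0)^+)=g_Y(Y,v^f(t_0)^+)$; your sign bookkeeping for the backward version is correct. Where you diverge is the uniformity step. The paper's contradiction is shorter and symmetric in the two modes of approach: from $N^f_+(s_j)\to 1$ it extracts limits and obtains \emph{both} $g_{X_\infty}(X_\infty,v)=1$ and $g_{Y_\infty}(Y_\infty,v)=1$ for the same unit vector $v=\lim_j v^f(s_j)^+$, whence $v=X_\infty$ and $v=Y_\infty$ by the fundamental inequality, so $X_\infty=Y_\infty$, contradicting the fact that the two limit segments bound a sector at $c(s_\infty)$. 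This never requires identifying $v$ with a tangent direction of $c$, so Lemma \ref{lem5.4} is not needed and no left/right case split arises beyond checking $X_\infty\ne Y_\infty$.

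Your version, which uses only one of the two equalities and then pins down $v^*$ via Lemma \ref{lem5.4}, has a hole in the case $t_n\nearrow t^*$ as written. The left-sided analogue of Lemma \ref{lem5.4} gives $v^*=\lim_{t\nearrow t^*}v^f(t)^+=\frac{-1}{F(-v^f(t^*)^-)}\,v^f(t^*)^-$, i.e.\ the renormalized \emph{reverse} of the left tangent. This vector does not lie in the angular arc attached to $\Sigma^-_{c(t^*)}$ but in the complementary arc whose endpoints are $\dot\alpha^-_{t^*}$ and $\dot\beta^-_{t^*}$ --- and those endpoints are precisely the possible values of $X^*=\lim_n\dot\alpha^+_{t_n}$. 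So the intended contradiction, ``an interior point of the angular arc equals the tangent of an $N$-segment,'' does not apply; moreover your stronger claim that $v^f(t^*)^+$ cannot equal the velocity of \emph{any} $N$-segment to $c(t^*)$ is unjustified in general (a segment bounding a different sector can have its velocity inside the open arc; only the endpoints $X,Y$ are excluded by the characterizing identity). What you must, and can, show is that $v^*$ is not an endpoint of \emph{its} arc: by linearity $v^*$ still satisfies $g_{\dot\alpha^-_{t^*}}(\dot\alpha^-_{t^*},v^*)=g_{\dot\beta^-_{t^*}}(\dot\beta^-_{t^*},v^*)$, so $v^*=\dot\alpha^-_{t^*}$ would force $g_{\dot\beta^-_{t^*}}(\dot\beta^-_{t^*},\dot\alpha^-_{t^*})=1$ and hence $\dot\alpha^-_{t^*}=\dot\beta^-_{t^*}$, impossible at an interior point of the Jordan arc. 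With that supplement your argument closes; without it the left-approach case is a genuine gap. Adopting the paper's two-equality argument avoids the issue entirely.
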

\begin{proof}
From  Propositions \ref{prop2.1},  \ref{prop2.2}, and \ref{prop5.1}
it follows that for each $t_0\in[0,1)$ 
\begin{equation}\label{eq5-5}
N_+^f(t_0):=\lim_{t\searrow t_0}D_N(c(t_0),c(t))=g_X(X,v^f(t_0)^+)=g_Y(Y,v^f(t_0)^+)
\end{equation}
and
\begin{equation}\label{eq5-6}
N_+^b(t_0):=\lim_{t\searrow t_0}D_N(c(t),c(t_0))=g_X(X,v^b(t_0)^+)=g_Y(Y,v^b(t_0)^+)
\end{equation}
hold. 
Here $X:=\dot\alpha_{t_0}^+(d(N,c(t_0)))$ and $Y:=\dot\beta_{t_0}^+(d(N,c(t_0))).$
Hence \eqref{eq5-3} and \eqref{eq5-4} are clear.
We will prove the latter claim.
From Lemma 1.2.3 in \cite{S}, it is clear that 
$N_+^f(t_0)<1$ and $N_+^b(t_0)<1$ for each $t_0\in[0,1).$ Notice that 
$X\ne Y,$ since $\alpha_{t_0}^+\ne\beta_{t_0}^+.$
Suppose that 
 there exists a sequence $\{s_j\}$ of numbers in $ [a,b]$ satisfying
\begin{equation}\label{eq5-7}
\lim_{j\to \infty}N_+^f(s_j)=1\quad {\rm or}\quad \lim_{j\to\infty}N_+^b(s_j)=1
\end{equation}
Thus, by (\ref{eq5-5}), \eqref{eq5-6} and (\ref{eq5-7}),
\begin{equation}\label{eq5-8}
\lim_{j\to\infty} g_{X_j}(X_j,v^f(s_j)^+)=\lim_{j\to\infty} g_{Y_j}(Y_j,v^f(s_j)^+)=1
\end{equation}
or
\begin{equation}\label{eq5-9}
\lim_{j\to\infty} g_{X_j}(X_j,v^b(s_j)^+)=\lim_{j\to\infty} g_{Y_j}(Y_j,v^b(s_j)^+)=1
\end{equation}
holds.
Here, $X_j:=\dot\alpha_{s_j}^+(d(N,c(s_j)) )$ and $Y_j:=\dot\beta_{s_j}^+(d(N,c(s_j) )  ).$

By choosing a subsequence of $\{s_j\},$ we may assume that
$s_\infty:=\lim_{j\to\infty} s_j\in[a,b]$, 
$\alpha_\infty:=\lim_{j\to\infty}\alpha_j^+,$ $\beta_\infty:=\lim_{j\to\infty}\beta_j^+,$
$X_\infty:=\lim_{j\to\infty}X_j=\dot\alpha_{\infty}(d(N,c(s_\infty)),$ $Y_\infty:=\lim_{j\to\infty} Y_j=\dot\beta_{\infty}(d(N,c(s_\infty)),$ $\lim_{j\to\infty}v^f(s_j)^+$ and $\lim_{j\to\infty}v^b(s_j)^+(\in T_{c(s_\infty)} M)$ exist.
By (\ref{eq5-8}) and (\ref{eq5-9}), we obtain
\begin{equation}\label{eq5-10}
g_{X_\infty}(X_\infty,\lim_{j\to\infty}v^f(s_j)^+)
=g_{Y_\infty}(Y_\infty,\lim_{j\to\infty}v^f(s_j)^+)=1
\end{equation}
 or
\begin{equation}\label{eq5-11}
g_{X_\infty}(X_\infty,\lim_{j\to\infty}v^b(s_j)^+)
=g_{Y_\infty}(Y_\infty,\lim_{j\to\infty}v^b(s_j)^+)=1
\end{equation}
holds.

Since $\alpha_{\infty}$ and $\beta_{\infty}$ are $N$-segments
that form part of the sector $\Sigma_{c(s_{\infty})}^+$ or   $\Sigma_{c(s_{\infty})}^-$ at $c(s_{\infty}),s_{\infty}\in[a,b]\subset[0,1),$ 
it follows from our assumption that 
$X_{\infty}\ne Y_{\infty}.$ 
Thus, by Lemma 1.2.3 in \cite{S}, we get a contradiction from the equations
 \eqref{eq5-10} and \eqref{eq5-11}. This implies the existence of the number $\epsilon_0\in(0,1).$
$\qedd$
\end{proof}

Similarly, we have

\begin{lemma}\label{lem5.6}
Suppose that $\alpha_{t}^+\ne\beta_{t}^+$ for $t=0.$ Then
  for each $t_0\in (0,1),$   
the following limits from the left exist:
\begin{equation}\label{eq5-12}
\lim_{t\nearrow t_0}D_N(c(t_0),c(t))=g_{w(t_0)^-}(w(t_0)^-,v^f(t_0)^-)
\end{equation}
and
\begin{equation}\label{eq5-13}
\lim_{t\nearrow t_0}D_N(c(t),c(t_0))=g_{w(t_0)^-}(w(t_0)^-,v^b(t_0)^-),
\end{equation} 
where 
\begin{equation*}
w(t_0)^-:=\dot\alpha^-_{t_0}(d(N,c(t_0)) ) \textrm{ or } \dot\beta^-_{t_0}(d(N,c(t_0)) ).
\end{equation*}
Furthermore, for each compact subinterval $[a,b]\subset [0,1),$  there exists $\epsilon_0\in(0,1)$ such that
$$\lim_{t\nearrow t_0}D_N(c(t_0),c(t))<\epsilon_0, \quad {\rm and }\quad 
\lim_{t\nearrow t_0}D_N(c(t),c(t_0))<\epsilon_0$$
for each  $t_0\in (a,b].$

\end{lemma}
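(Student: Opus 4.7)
The plan is to follow the scheme of the proof of Lemma~\ref{lem5.5} closely, with right limits replaced by left limits throughout and with Proposition~\ref{prop5.1} replaced by Proposition~\ref{prop5.2} as the source for the one-sided tangent directions.

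First, I identify the limits \eqref{eq5-12} and \eqref{eq5-13}. Proposition~\ref{prop5.2} furnishes the vectors $v^f(t_0)^-$ and $v^b(t_0)^-$ at each $t_0 \in (0,1)$. Feeding a sequence $\{c(t_n)\}$ with $t_n \nearrow t_0$ into the generalized first variation formulas (Propositions~\ref{prop2.1} and~\ref{prop2.2}), in complete parallel with \eqref{eq5-5} and \eqref{eq5-6}, one obtains
\[
N^f_-(t_0) := \lim_{t\nearrow t_0} D_N(c(t_0), c(t)) = g_X(X, v^f(t_0)^-) = g_Y(Y, v^f(t_0)^-),
\]
together with the analogous identity for $N^b_-(t_0)$, where $X := \dot\alpha^-_{t_0}(d(N,c(t_0)))$ and $Y := \dot\beta^-_{t_0}(d(N,c(t_0)))$. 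This yields \eqref{eq5-12} and \eqref{eq5-13}.

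Second, for the uniform bound on a compact subinterval $[a,b] \subset [0,1)$, I argue by contradiction as in Lemma~\ref{lem5.5}. Suppose a sequence $\{s_j\} \subset (a,b]$ satisfies $N^f_-(s_j) \to 1$ (or $N^b_-(s_j) \to 1$). Compactness of $[a,b]$ and of the unit tangent spheres, together with continuity of the families $\{\alpha^-_t\}$ and $\{\beta^-_t\}$, allows passing to a subsequence along which $s_j \to s_\infty$, $X_j \to X_\infty$, $Y_j \to Y_\infty$, and the vectors $v^f(s_j)^-$ (or $v^b(s_j)^-$) converge. Taking the limit in $g_{X_j}(X_j, v^f(s_j)^-) = g_{Y_j}(Y_j, v^f(s_j)^-) = N^f_-(s_j)$ and invoking Lemma~1.2.3 of~\cite{S} forces $X_\infty = Y_\infty$, contradicting $\alpha^-_{s_\infty} \neq \beta^-_{s_\infty}$.

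The main obstacle is precisely this last step: ensuring $X_\infty \neq Y_\infty$ in the limit. For $s_\infty$ in the open interval $(0,1)$ the distinctness is automatic from the defining property of sectors at interior points of the Jordan arc. The boundary possibility $s_\infty = 0$ — which can occur when $a=0$, where a subsequence $s_j \searrow 0$ forces $\alpha^-_{s_j}, \beta^-_{s_j}$ to converge to $N$-segments of $c(0)$ — is precisely where the hypothesis $\alpha^+_0 \neq \beta^+_0$ of the lemma intervenes, in the same way as in the corresponding step of Lemma~\ref{lem5.5}.
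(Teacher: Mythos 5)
Your proposal is correct and matches the paper's intent exactly: the paper gives no separate proof of Lemma \ref{lem5.6}, introducing it only with ``Similarly, we have,'' i.e.\ by mirroring the proof of Lemma \ref{lem5.5} with left limits, Proposition \ref{prop5.2} in place of Proposition \ref{prop5.1}, and the sector data $\alpha^-_{t_0},\beta^-_{t_0}$. Your identification of the only delicate point --- that the distinctness $X_\infty\ne Y_\infty$ at a limit parameter $s_\infty=0$ is exactly where the hypothesis $\alpha^+_0\ne\beta^+_0$ enters --- is the same observation the paper makes in the corresponding step of Lemma \ref{lem5.5}.
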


\section{Approximation by the distance function from a point}
\label{sec6}

\begin{lemma}\label{lem6.1}
Suppose that $\alpha_{t}^+\ne\beta_{t}^+$ for $t=0.$ Then
for any  $t_0\in[0,1)$ and each interior point $p$  of the $N$-segment
$\alpha_{t_0}^+,$ 
there exist positive numbers $\epsilon_1\in(0,1)$ and  $\delta_0$
such that 
$$\lim_{u\searrow t}D_P(c(t),c(u))<\epsilon_1\quad
    {\rm and}\quad
\lim_{u\searrow t}D_P(c(u),c(t))<\epsilon_1$$
for each  $t\in [t_0,t_0+\delta_0).$ Here $P:=\{p\}.$

\end{lemma}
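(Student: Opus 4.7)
The plan is to reduce the lemma to the first variation formulas of Section 2 applied with $N$ replaced by $P=\{p\}$, and then propagate a strict inequality obtained at $t=t_0$ to a uniform inequality on $[t_0,t_0+\delta_0)$ by continuity. The first step is to check that $c(t_0)$ is not a cut point of $p$: writing $p=\alpha_{t_0}^+(s)$ with $s\in(0,d(N,c(t_0)))$, any minimizing geodesic from $p$ to $c(t_0)$ concatenated with $\alpha_{t_0}^+|_{[0,s]}$ gives a curve from $N$ to $c(t_0)$ of length $d(N,c(t_0))$, hence an $N$-segment; smoothness of minimizing geodesics then forces this minimal geodesic to coincide with $\alpha_{t_0}^+|_{[s,d(N,c(t_0))]}$. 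Since the cut locus of a point in a forward-complete Finsler manifold is closed (see \cite{BCS}), one can choose $\delta_0>0$ so that $c([t_0,t_0+\delta_0))$ is disjoint from it; for every such $t$ the $P$-segment from $p$ to $c(t)$ is then unique, and its tangent $X(t)\in S_{c(t)}M$ at $c(t)$ depends continuously on $t$, with $X(t_0)=\dot\alpha_{t_0}^+(d(N,c(t_0)))=:X$.

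Applying Propositions \ref{prop2.1} and \ref{prop2.2} with $N$ replaced by $P$ to the sequence of $P$-segments to $c(u)$ as $u\searrow t$, and identifying the limiting direction vectors via Proposition \ref{prop5.1}, I obtain
$$\lim_{u\searrow t}D_P(c(t),c(u))=g_{X(t)}\bigl(X(t),v^f(t)^+\bigr),\qquad \lim_{u\searrow t}D_P(c(u),c(t))=g_{X(t)}\bigl(v^b(t)^+,X(t)\bigr).$$
At $t=t_0$ both right-hand sides are strictly less than $1$: by the characterization used in the proof of Proposition \ref{prop5.1}, $v^f(t_0)^+$ is the unique element of $J^+(X,Y)\cap\{Z:g_X(X,Z)=g_Y(Y,Z)\}$ with $Y:=\dot\beta_{t_0}^+(d(N,c(t_0)))$, and an entirely analogous argument identifies $v^b(t_0)^+$ as the unique element of the opposite arc $J^-(X,Y)$ satisfying the same linear equation. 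Since $X\ne Y$ by the hypothesis of the lemma, Lemma 1.2.3 of \cite{S} excludes both $v^f(t_0)^+=X$ and $v^b(t_0)^+=X$, whence $g_X(X,v^f(t_0)^+)<1$ and $g_X(v^b(t_0)^+,X)<1$. Combining this with the continuity of $X(t)$ at $t_0$ and with Lemma \ref{lem5.4} (together with its backward counterpart for $v^b$), both right-hand sides are right-continuous in $t$ at $t_0$, so a suitable $\epsilon_1\in(0,1)$ exists after possibly shrinking $\delta_0$.

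The main technical obstacle is the right-continuity of $v^b(t)^+$ as $t\searrow t_0$, which is not directly covered by Lemma \ref{lem5.4}; the proof of Lemma \ref{lem5.4} should adapt verbatim, with $J^+(X_t,Y_t)$ replaced by $J^-(X_t,Y_t)$ in the characterization, since the linear equation and the continuity of the sector-boundary $N$-segments $\alpha_t^+,\beta_t^+$ as $t\searrow t_0$ are exactly as in the forward case. A minor additional point is the identification of $v^f(t)^+$ and $v^b(t)^+$ from Proposition \ref{prop5.1} (defined via the $N$-cut-locus structure of $c$) with the limiting direction vectors entering Propositions \ref{prop2.1} and \ref{prop2.2} when applied to the $P$-segments to $c(u)$: this identification is purely geometric, as these limits depend only on the arc $c$ and not on the closed set with respect to which distance is being measured.
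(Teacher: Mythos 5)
Your proposal is correct in outline and shares the paper's backbone: you reduce to Propositions \ref{prop2.1}, \ref{prop2.2} and \ref{prop5.1} applied with $P=\{p\}$, you observe (as the paper does implicitly) that the minimal geodesic from $p$ to $c(t_0)$ is the terminal piece of $\alpha_{t_0}^+$, so $c(t_0)$ avoids the (closed) cut locus of $p$ and the $P$-direction $X(t)=(\nabla d_p)_{c(t)}$ varies continuously with $X(t_0)=w(t_0)^+$; the identifications you call \eqref{eq6-7}--type equalities are exactly the paper's. Where you diverge is the mechanism for getting a bound valid on a whole right-neighborhood of $t_0$: the paper never uses continuity in $t$ of $v^f(t)^+$ or $v^b(t)^+$. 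Instead it invokes the ``Furthermore'' clause of Lemma \ref{lem5.5}, which already gives a \emph{uniform} $\epsilon_0<1$ for $g_{w(t)^+}(w(t)^+,v^f(t)^+)$ and $g_{w(t)^+}(w(t)^+,v^b(t)^+)$ on a compact interval $[t_0,t_0+\delta_1]$, and then transfers this to the $P$-quantities by making $F\bigl((\nabla d_p)_{c(t)}-w(t)^+\bigr)$ uniformly small for $t\in[t_0,t_0+\delta_0]$, so the two $g$-evaluations differ by at most $\tfrac{1-\epsilon_0}{2}$ uniformly over unit vectors; this yields $\epsilon_1=\tfrac{1+\epsilon_0}{2}$ with no continuity statement about the limit directions themselves. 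Your route---strict inequality at $t_0$ plus right-continuity of $t\mapsto g_{X(t)}(X(t),v^{f}(t)^+)$ and $t\mapsto g_{X(t)}(X(t),v^{b}(t)^+)$---is viable, but it genuinely needs right-continuity of $v^b(t)^+$ at $t_0$, which Lemma \ref{lem5.4} does not provide, and your claim that its proof adapts ``verbatim with $J^+$ replaced by $J^-$'' is too quick: the proof of Lemma \ref{lem5.4} rests on a uniqueness characterization, and for $v^b$ you must first pin down that $v^b(t)^+$ is the \emph{second} intersection point of the line $\{Z\,|\,g_{X_t}(X_t,Z)=g_{Y_t}(Y_t,Z)\}$ with the indicatrix, i.e.\ $v^b(t)^+=-v^f(t)^+/F(-v^f(t)^+)$; this requires excluding $v^b(t)^+=v^f(t)^+$ and locating the correct arc, which is a small separate argument (a first-order estimate on $\exp^{-1}_{c(u)}(c(t))$ versus $-\exp^{-1}_{c(t)}(c(u))$, using the $C^1$ property of $\exp$ at the zero section), not a relabelling of $J^+$ as $J^-$. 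Once that supplementary fact is supplied your argument closes; alternatively, you can avoid it entirely by following the paper's uniformity mechanism, since Lemma \ref{lem5.5} already delivers the needed bound for both the forward and backward quantities on the whole interval. (A minor remark: your justification that $c(t_0)\notin\Cut(p)$ only rules out a second minimizer from $p$, not a conjugate-type cut point; the paper asserts this fact without proof as well, so this is not a substantive defect, but a complete argument would also exclude the conjugate case.)
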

\begin{proof} Suppose that  $t_0\in[0,1)$ is arbitrarily given. 
 Choose any interior point $p$
of the $N$-segment $\alpha_{t_0}^+.$
Since the point $c(t_0)$ is not a cut point of the point $p,$ 
there exists  $\delta_1\in(0,1-t_0)$ such that
the subarc $c[t_0,t_0+\delta_1]$ of $c$ is disjoint from the cut locus of $p.$   
Notice that the cut locus of $p$ is a closed subset of $M.$
By applying Lemma \ref{lem5.5} 
for the interval $[t_0,t_0+\delta_1],$
we get a number $\epsilon_0\in(0,1)$ satisfying 
\begin{equation}\label{eq6-1}
\lim_{u\searrow t} D_N(c(t),c(u))=g_{w(t)^+}(w(t)^+,v^f(t)^+)<\epsilon_0,\:
\end{equation}
and
\begin{equation}\label{eq6-2}
\lim_{u\searrow t} D_N(c(u),c(t))=g_{w(t)^+}(w(t)^+,v^b(t)^+)<\epsilon_0\:
\end{equation}
\
for each $t\in[t_0,t_0+\delta_1].$
Here $w(t)^+$ denotes  $\dot\alpha_t^+(d(N,c(t)))$
 in our argument.

For each $t\in[t_0,t_0+\delta_1],$ let
 $(\nabla d_p)_{c(t)}$ denote the (unit) velocity vector of  the minimal geodesic segment from $p$ to $c(t)$ at $c(t).$
Since
 $ (\nabla d_p)_{c(t_0)}=
w(t_0)^+$ 
and $\lim_{t\searrow t_0}w(t)^+=w(t_0)^+,$
we get a number  $\delta_0\in(0,\delta_1)$ so as to satisfy  that
$F((\nabla d_p)_{c(t)}-w(t)^+) )$ is sufficiently small for each $t\in[t_0,t_0+\delta_0],$ so that
\begin{equation}\label{eq6-3}
\left|  g_{(\nabla d_p)_{c(t)}}((\nabla d_p)_{c(t)},v^f(t)^+)-g_{w(t)^+}(w(t)^+,v^f(t)^+)\right|<\frac{1-\epsilon_0}{2}
\end{equation}
and
\begin{equation}\label{eq6-4}
\left|  g_{(\nabla d_p)_{c(t)}}((\nabla d_p)_{c(t)},v^b(t)^+)-g_{w(t)^+}(w(t)^+,v^b(t)^+)\right|<\frac{1-\epsilon_0}{2}
\end{equation}
hold for each $t\in[t_0,t_0+\delta_0].$
Therefore, by the triangle inequality and the equations \eqref{eq6-1}, \eqref{eq6-2}, \eqref{eq6-3} and \eqref{eq6-4},
 \begin{equation}\label{eq6-5}
 g_{(\nabla d_p)_{c(t)}  }( (\nabla d_p)_{c(t)},v^f(t)^+)<\frac{1+\epsilon_0}{2}\end{equation}
and
\begin{equation}\label{eq6-6}
 g_{(\nabla d_p)_{c(t)}  }( (\nabla d_p)_{c(t)},v^b(t)^+)<\frac{1+\epsilon_0}{2}\end{equation}
for each $t\in[t_0,t_0+\delta_0].$

On the other hand, by Propositions \ref{prop2.1}, \ref{prop2.2}  and \ref{prop5.1},
we obtain
\begin{equation}\label{eq6-7}
\lim_{u\searrow t}D_P(c(t),c(u))=g_{(\nabla d_p)_{c(t)}}((\nabla d_p)_{c(t)},v^f(t)^+)
\end{equation}

\begin{equation}\label{eq6-8}
\lim_{u\searrow t}D_P(c(u),c(t))=g_{(\nabla d_p)_{c(t)}}((\nabla d_p)_{c(t)},v^b(t)^+),
\end{equation}
for each $t\in [t_0,t_0+\delta_0].$
From  \eqref{eq6-5}, \eqref{eq6-6}, \eqref{eq6-7} and \eqref{eq6-8}, it is clear that
$\lim_{u\searrow t}D_P(c(t),c(u))$
 and $\lim_{u\searrow t}D_P(c(u),c(t))$ 
are less than $\epsilon_1:=\frac{1+\epsilon_0}{2}$ 
for each  $t\in [t_0,t_0+\delta_0].$

$\qedd$
\end{proof}

\begin{lemma}\label{lem6.2}
Suppose that $\alpha_{t}^+\ne\beta_{t}^+$ for $t=0.$ Then
for any  $t_0\in[0,1)$ and each interior point $p$  of the $N$-segment
$\alpha_{t_0}^+,$ 
there exist positive numbers $\epsilon_1\in(0,1)$ and  $\delta_0$
such that 
$$ \lim_{u\nearrow t}D_P(c(t),c(u))<\epsilon_1\quad
{\rm and}\quad \lim_{u\nearrow t}D_P(c(u),c(t))<\epsilon_1$$
for each  
  $t\in (t_0,t_0+\delta_0].$
\end{lemma}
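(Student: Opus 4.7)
The plan is to mirror the structure of the proof of Lemma 6.1, replacing Lemma 5.5 by Lemma 5.6 (to handle left limits along $c$) and Proposition 5.1 by Proposition 5.2 (to obtain the existence of the left-limit direction vectors $v^f(t)^-$ and $v^b(t)^-$).

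First I would choose $\delta_1\in(0,1-t_0)$ such that $c[t_0,t_0+\delta_1]$ is disjoint from the (closed) cut locus of $p$, which is possible since $p$ is an interior point of the $N$-segment $\alpha_{t_0}^+$ and hence $c(t_0)$ is not a cut point of $p$. Then I would apply Lemma 5.6 on the interval $[t_0,t_0+\delta_1]$, obtaining a constant $\epsilon_0\in(0,1)$ such that
\begin{equation*}
\lim_{u\nearrow t}D_N(c(t),c(u))=g_{w(t)^-}(w(t)^-,v^f(t)^-)<\epsilon_0
\end{equation*}
and
\begin{equation*}
\lim_{u\nearrow t}D_N(c(u),c(t))=g_{w(t)^-}(w(t)^-,v^b(t)^-)<\epsilon_0
\end{equation*}
for every $t\in(t_0,t_0+\delta_1]$, where $w(t)^-$ can be taken as $\dot\alpha^-_t(d(N,c(t)))$ or $\dot\beta^-_t(d(N,c(t)))$.

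The crucial geometric observation is that for every $t>t_0$ sufficiently close to $t_0$, the arc $c[t_0,t)$ lies inside $\Sigma^-_{c(t)}$ while $c(t_0,t]$ lies inside $\Sigma^+_{c(t_0)}$; consequently, as $t\searrow t_0$, the bounding $N$-segments $\{\alpha_t^-,\beta_t^-\}$ of $\Sigma^-_{c(t)}$ converge (as a set, possibly after relabeling) to the bounding $N$-segments $\{\alpha_{t_0}^+,\beta_{t_0}^+\}$ of $\Sigma^+_{c(t_0)}$. I would label things so that $\alpha_t^-\to\alpha_{t_0}^+$ and then set $w(t)^-:=\dot\alpha_t^-(d(N,c(t)))$. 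Since $p$ is an interior point of $\alpha_{t_0}^+$ we have $(\nabla d_p)_{c(t_0)}=w(t_0)^+$, and together with the continuity of $(\nabla d_p)_{c(t)}$ on the arc $c[t_0,t_0+\delta_1]$ (which avoids the cut locus of $p$) and the convergence $\lim_{t\searrow t_0}w(t)^-=w(t_0)^+$, I can choose $\delta_0\in(0,\delta_1)$ so small that $F((\nabla d_p)_{c(t)}-w(t)^-)$ is as small as we wish for every $t\in(t_0,t_0+\delta_0]$; in particular, so that
\begin{equation*}
\left|g_{(\nabla d_p)_{c(t)}}((\nabla d_p)_{c(t)},v^{\ast}(t)^-)-g_{w(t)^-}(w(t)^-,v^{\ast}(t)^-)\right|<\tfrac{1-\epsilon_0}{2}
\end{equation*}
for $\ast\in\{f,b\}$.

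Combining Propositions 2.1, 2.2, and 5.2 gives
\begin{equation*}
\lim_{u\nearrow t}D_P(c(t),c(u))=g_{(\nabla d_p)_{c(t)}}((\nabla d_p)_{c(t)},v^f(t)^-),
\end{equation*}
\begin{equation*}
\lim_{u\nearrow t}D_P(c(u),c(t))=g_{(\nabla d_p)_{c(t)}}((\nabla d_p)_{c(t)},v^b(t)^-),
\end{equation*}
and the triangle inequality together with the bounds above shows that both of these are strictly less than $\epsilon_1:=(1+\epsilon_0)/2<1$ on $(t_0,t_0+\delta_0]$. The main obstacle I expect is the geometric step of identifying $\lim_{t\searrow t_0}w(t)^-=w(t_0)^+$: one must verify carefully that as $t\searrow t_0$ the boundary $N$-segments of $\Sigma^-_{c(t)}$ really do accumulate only on the boundary $N$-segments of $\Sigma^+_{c(t_0)}$, with the correct labeling, so that the right choice of $w(t)^-$ is continuous at $t_0$ from the right. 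Everything else is a direct ``reflected'' version of the computation in Lemma 6.1.
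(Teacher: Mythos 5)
Your proposal follows the paper's own proof essentially step for step: the same choice of $\delta_1$ avoiding $\Cut(p)$, the same application of Lemma \ref{lem5.6} to get $\epsilon_0$, the same perturbation estimate bounding $g_{(\nabla d_p)_{c(t)}}(\cdot,\cdot)$ within $\frac{1-\epsilon_0}{2}$ of $g_{w(t)^-}(\cdot,\cdot)$, and the same use of Propositions \ref{prop2.1}, \ref{prop2.2} and \ref{prop5.2} to identify the $D_P$ limits, yielding $\epsilon_1=\frac{1+\epsilon_0}{2}$. The convergence $\lim_{t\searrow t_0}w(t)^-=w(t_0)^+$ that you flag as the main obstacle is exactly the fact the paper asserts (and already uses in the proof of Lemma \ref{lem5.4}), so your argument is correct and not materially different.
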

\begin{proof} Suppose that  $t_0\in[0,1)$ is arbitrarily given. 
 Choose any interior point $p$
of the $N$-segment $\alpha_{t_0}^+.$ 
Since the point $c(t_0)$ is not a cut point of the point $p,$ 
there exists  $\delta_1\in(0,1-t_0)$ such that
the subarc $c[t_0,t_0+\delta_1]$ of $c$ is disjoint from the cut locus of $p.$ 
By applying Lemma  \ref{lem5.6}
for the interval $[t_0,t_0+\delta_1],$
we get a number $\epsilon_0\in(0,1)$ satisfying 
\begin{equation}\label{eq6-1p}
 \lim_{u\nearrow t} D_N(c(t),c(u))=g_{w(t)^-}(w(t)^-,v^f(t)^-)<\epsilon_0,
\end{equation}
and
\begin{equation}\label{eq6-2p}
 \lim_{u\nearrow t} D_N(c(u),c(t))=g_{w(t)^-}(w(t)^-,v^b(t)^-)<\epsilon_0
\end{equation}
for each  $t\in(t_0,t_0+\delta_1].$
Here  $w(t)^-$ denotes   $\dot\alpha_t^-(d(N,c(t)))$ in our argument.
Since
 $ (\nabla d_p)_{c(t_0)}=
w(t_0)^+$ 
and $\lim_{t\searrow t_0}w(t)^-=w(t_0)^+,$
we get a number  $\delta_0\in(0,\delta_1)$ so as to satisfy  that
$F((\nabla d_p)_{c(t)}-w(t)^-) )$ is sufficiently small for each $t\in[t_0,t_0+\delta_0],$ so that
\begin{equation}\label{eq6-3p}
\left|  g_{(\nabla d_p)_{c(t)}}((\nabla d_p)_{c(t)},v^f(t)^-)-g_{w(t)^-}(w(t)^-,v^f(t)^-)\right|<\frac{1-\epsilon_0}{2}
\end{equation}
and
\begin{equation}\label{eq6-4p}
\left|  g_{(\nabla d_p)_{c(t)}}((\nabla d_p)_{c(t)},v^b(t)^-)-g_{w(t)^-}(w(t)^-,v^b(t)^-)\right|<\frac{1-\epsilon_0}{2}
\end{equation}
hold for each $t\in(t_0,t_0+\delta_0].$
Therefore, by the triangle inequality and the equations 
   \eqref{eq6-1p}, \eqref{eq6-2p}, \eqref{eq6-3p} and \eqref{eq6-4p},
\begin{equation}\label{eq6-5p}
 g_{(\nabla d_p)_{c(t)}  }( (\nabla d_p)_{c(t)},v^f(t)^-)<\frac{1+\epsilon_0}{2}\end{equation}
and
\begin{equation}\label{eq6-6p}
  g_{(\nabla d_p)_{c(t)}  }( (\nabla d_p)_{c(t)},v^b(t)^-)<\frac{1+\epsilon_0}{2}
\end{equation}
for each $t\in(t_0,t_0+\delta_0].$
On the other hand, by Propositions \ref{prop2.1},  \ref{prop2.2} and  \ref{prop5.2},
we obtain
\begin{equation}\label{eq6-7p}
\lim_{u\nearrow t}D_P(c(t),c(u))=g_{(\nabla d_p)_{c(t)}}((\nabla d_p)_{c(t)},v^f(t)^-) 
\end{equation}
and 
\begin{equation}\label{eq6-8p}
\lim_{u\nearrow t}D_P(c(u),c(t))=g_{(\nabla d_p)_{c(t)}}((\nabla d_p)_{c(t)},v^b(t)^-)
\end{equation}
for each $t\in(t_0,t_0+\delta_0].$
From     \eqref{eq6-5p}, \eqref{eq6-6p}, \eqref{eq6-7p} and \eqref{eq6-8p},  it is clear that
$\lim_{u\nearrow t}D_P(c(t),c(u))$   and
$\lim_{u\nearrow t}D_P(c(u),c(t))$ 
are less than $\epsilon_1:=\frac{1+\epsilon_0}{2}$ 
for each  $t\in(t_0,t_0+\delta_0].$

$\qedd$
\end{proof}

\begin{lemma}\label{lem6.3}
If
$c(0)$ admits a unique $N$-segment, then there exist $\epsilon_0,\delta_0\in (0,1)$   satisfying  that 
$\lim_{u\searrow t} D_N(c(t),c(u))>\epsilon_0$ for each  $t\in [0,\delta_0),$
and $\lim_{u\nearrow t} D_N(c(u),c(t))>\epsilon_0$ for each  $t\in (0,\delta_0].$

 \end{lemma}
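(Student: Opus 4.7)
Let $\alpha$ denote the unique $N$-segment to $c(0)$ and put $l := d(N, c(0))$. The strategy is to split the argument into the interior regime $t \in (0, \delta_0]$, where the formulas of Lemmas~\ref{lem5.5} and~\ref{lem5.6} apply, and the boundary case $t = 0$ of the forward statement, which will be handled via a length-comparison bootstrap through Lemma~\ref{lem4.3}.

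For every $t \in (0, 1)$ the Jordan-arc setup forces $\alpha_t^{\pm} \neq \beta_t^{\pm}$. Hence, after restricting $c$ to a subinterval $[t^{*}, 1]$ with $0 < t^{*} < t$, the hypotheses of Lemmas~\ref{lem5.5} and~\ref{lem5.6} are met at the new left endpoint, and their conclusions give
\[
\lim_{u \searrow t} D_N(c(t), c(u)) = g_{w(t)^+}(w(t)^+, v^f(t)^+), \qquad \lim_{u \nearrow t} D_N(c(u), c(t)) = g_{w(t)^-}(w(t)^-, v^b(t)^-).
\]
As $t \searrow 0$, any subsequential limit of the $N$-segments $\alpha_t^{\pm}$ or $\beta_t^{\pm}$ must coincide with the unique $\alpha$, so $w(t)^{\pm} \to \dot\alpha(l)$. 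Combining this with Lemma~\ref{lem5.3}, which yields $v^f(t)^+, v^b(t)^- \to \dot\alpha(l)$, and using continuity of the fundamental tensor, both right-hand sides above converge to $g_{\dot\alpha(l)}(\dot\alpha(l), \dot\alpha(l)) = F(\dot\alpha(l))^2 = 1$. Fix $\epsilon_0 \in (0, 1)$ close to $1$ and choose $\delta_0 \in (0, 1)$ so small that both limits exceed $\epsilon_0$ for every $t \in (0, \delta_0]$. This settles the forward statement on $(0, \delta_0)$ and the backward statement on $(0, \delta_0]$.

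It remains to verify the forward inequality at $t = 0$. The uniform bound just established combined with Lemma~\ref{lem4.3} yields $l(c|_{[s, u]}) \leq \epsilon_0^{-1}\bigl(d_N(c(u)) - d_N(c(s))\bigr)$ for every $[s, u] \subset (0, \delta_0)$. For an arbitrary partition $0 = s_0 < s_1 < \dots < s_n = u$ of $[0, u]$ with $u \in (0, \delta_0)$, inserting any point $s \in (0, s_1)$ only enlarges the associated Riemann-type sum, and this refined sum is in turn dominated by $d(c(0), c(s)) + l(c|_{[s, u]})$. Letting $s \searrow 0$, invoking continuity of $c$ and of $d_N \circ c$, and taking the supremum over all partitions, we obtain $l(c|_{[0, u]}) \leq \epsilon_0^{-1}\bigl(d_N(c(u)) - d_N(c(0))\bigr)$. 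Together with $l(c|_{[0, u]}) \geq d(c(0), c(u))$, this gives $D_N(c(0), c(u)) \geq \epsilon_0$ for every $u \in (0, \delta_0)$, from which the desired strict lower bound on $\lim_{u \searrow 0} D_N(c(0), c(u))$ follows by replacing $\epsilon_0$ with any smaller positive constant.

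The main obstacle is precisely the endpoint case $t = 0$. At a cut point admitting a unique $N$-segment, the characterization of $v^f(0)^+$ in Proposition~\ref{prop5.1} as the unique element of $J^+(X, Y) \cap \{Z : g_X(X, Z) = g_Y(Y, Z)\}$ degenerates because $X = Y$, and the Jordan arc $c$ need not possess a well-defined tangent direction at $c(0)$; hence the first variation formula cannot be invoked directly to control $D_N(c(0), c(u))$. The length-comparison bootstrap circumvents this difficulty by converting the uniform pointwise control on the interior into a single length inequality that extends continuously to the endpoint via monotone approximation using partitions that stay strictly inside $(0, \delta_0)$.
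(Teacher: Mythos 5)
Your treatment of $t\in(0,\delta_0]$ coincides with the paper's own proof: the paper likewise combines $w(t)^{\pm}\to\dot\alpha(l)$ with Lemma \ref{lem5.3} and the formulas \eqref{eq5-3} and \eqref{eq5-13} to conclude that both one-sided limits tend to $1$ as $t\searrow 0$; your explicit restriction of $c$ to $[t^{*},1]$ so that Lemmas \ref{lem5.5} and \ref{lem5.6} become applicable is merely making precise a step the paper leaves implicit (its standing hypothesis $\alpha_0^+\ne\beta_0^+$ formally fails here, but the formulas at interior parameters survive restriction). Where you genuinely diverge is at the endpoint $t=0$: the paper simply declares the existence of $\epsilon_0,\delta_0$ ``clear'' from \eqref{eq5-3} and \eqref{eq5-13}, even though $v^f(0)^+$ is not guaranteed to exist when $\Gamma_N(c(0))$ is a singleton (the uniqueness argument in Proposition \ref{prop5.1} degenerates, exactly as you observe), and Proposition \ref{prop2.1} by itself only identifies subsequential limits of $D_N(c(0),c(u))$ as $g_{\dot\alpha(l)}(\dot\alpha(l),v)$ for an uncontrolled unit vector $v$. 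Your length-comparison bootstrap --- applying Lemma \ref{lem4.3} on $[s,u]\subset(0,\delta_0)$, passing to the limit $s\searrow 0$ through refined partitions, and comparing $l(c|_{[0,u]})$ with $d(c(0),c(u))$ --- is correct and closes this case, giving $D_N(c(0),c(u))\ge\epsilon_0$ for all $u\in(0,\delta_0)$ and hence the stated bound at $t=0$ after slightly shrinking $\epsilon_0$. What this buys is a proof of the lemma as literally stated, with the closed bracket at $0$; it costs an extra step that the application in Theorem \ref{th6.4} could arguably do without, since the backward half of Lemma \ref{lem4.3}, fed by the bound on $(0,\delta_0]$ alone, already yields rectifiability there.
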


\begin{proof}

Since $c(0)$ admits a unique $N$-segment $\alpha : [0,l]\to M,$ 
$\lim_{t\searrow 0}w(t)^\pm=\dot\alpha(l).$
Hence, by Lemmas 5.3, we obtain
$$\lim_{t\searrow 0} g_{w(t)^+}(w(t)^+,v^f(t)^+)=1, \quad{\rm and }\quad
 \lim_{t\searrow 0}g_{w(t)^-}(w(t)^-,v^b(t)^-)=1.$$ 
Therefore, by  \eqref{eq5-3} and \eqref{eq5-13}, the existence of the numbers $\epsilon_0$ and $\delta_0$ is clear.

$\qedd$
\end{proof}

\begin{theorem}\label{th6.4}
Any   Jordan arc in the cut locus of a closed subset in a (forward) complete 2-dimensional Finsler manifold is  rectifiable.
\end{theorem}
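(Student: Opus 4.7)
The plan is to show that every $t_0 \in [0,1]$ possesses a relative neighborhood in $[0,1]$ on which $c$ is rectifiable; a finite subcover extracted by compactness then yields the theorem by additivity of length. The analysis splits according to the number of $N$-segments reaching $c(t_0)$.

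In Case A, where $\Gamma_N(c(t_0))$ is a singleton, the relation $\alpha_t^+ \ne \beta_t^+$ (valid for every $t \in (0,1)$) forces $t_0 \in \{0,1\}$. Taking $t_0 = 0$ (the other endpoint being symmetric), Lemma~\ref{lem6.3} supplies $\epsilon_0, \delta_0 \in (0,1)$ such that $\lim_{u \searrow t} D_N(c(t), c(u)) > \epsilon_0$ throughout $[0, \delta_0)$, whereupon Lemma~\ref{lem4.3} gives the bound $l(c|_{[0, \delta_0]}) \le \epsilon_0^{-1}\bigl(d(N, c(\delta_0)) - d(N, c(0))\bigr)$.

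In Case B, where $\alpha_{t_0}^+ \ne \beta_{t_0}^+$, I pick an interior point $p$ of $\alpha_{t_0}^+$ and set $P = \{p\}$. Restricting $c$ and reparametrizing to place $t_0$ at the left endpoint, Lemmas~\ref{lem6.1} and~\ref{lem6.2} yield $\epsilon_1 \in (0,1)$ and $\delta > 0$ so that the four one-sided limits of $D_P$ appearing in Lemma~\ref{lem4.4} are all strictly less than $\epsilon_1$ on $[t_0, t_0 + \delta]$, and symmetrically on $[t_0 - \delta, t_0]$. To apply Lemma~\ref{lem4.4}, I must also verify its two geometric hypotheses on a possibly smaller subinterval: (i) $c[t_0, t_0 + \delta] \cap \Cut(p) = \emptyset$, and (ii) each minimal geodesic from $p$ to $c(t)$ meets $c[t_0, t_0 + \delta]$ only at $c(t)$. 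Hypothesis (i) follows because $\Cut(p)$ is closed and $c(t_0) \notin \Cut(p)$ provided $p$ is chosen sufficiently close to $c(t_0)$ along $\alpha_{t_0}^+$, so that the subsegment from $p$ to $c(t_0)$ is the unique minimizer and contains no conjugate point. For (ii), at $t = t_0$ the minimizer is precisely the subsegment of $\alpha_{t_0}^+$ from $p$ to $c(t_0)$, whose open portion consists of non-cut points of $N$ and is therefore disjoint from $C_N \supset c[t_0, t_0 + \delta]$; continuous dependence of the exponential map, together with the Jordan-arc property of $c$ in the $2$-dimensional manifold $M$, ensures that the perturbed minimizers to nearby $c(t)$ continue to meet $c[t_0, t_0 + \delta]$ only at their endpoints once $\delta$ is small enough. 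Lemma~\ref{lem4.4} then delivers rectifiability on $[t_0 - \delta, t_0 + \delta]$.

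A finite subcover of $[0,1]$ drawn from the neighborhoods produced by Cases A and B, combined with the length bounds above, finishes the proof. The step I expect to require most care is the verification of hypothesis (ii) in Case B: one must rule out that a minimal geodesic $[p, c(t)]$ swings around and crosses the Jordan arc $c[t_0 - \delta, t_0 + \delta]$ at a point other than its endpoint $c(t)$. This argument relies on the closedness of $\Cut(p)$, continuity of the exponential map, and the planar topology of $2$-dimensional $M$; since $C_N$ itself need not be closed (cf.\ Example~\ref{Ex2.6}), one cannot work globally with $C_N$ but must localize to the compact sub-arc $c[t_0 - \delta, t_0 + \delta]$.
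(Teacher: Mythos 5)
Your proposal reproduces the paper's own argument: the same pointwise dichotomy (unique $N$-segment at $c(t_0)$, handled by Lemmas \ref{lem6.3} and \ref{lem4.3}, versus $\alpha_{t_0}^+\ne\beta_{t_0}^+$, handled by choosing an interior point $p$ of $\alpha_{t_0}^+$ and feeding Lemmas \ref{lem6.1} and \ref{lem6.2} into Lemma \ref{lem4.4}), followed by parameter reversal and compactness of $[0,1]$. The only point where you deviate is the justification of the geometric hypothesis of Lemma \ref{lem4.4} (that minimizers from $p$ meet the subarc only at their endpoints): continuity of the exponential map alone does not exclude a re-crossing arbitrarily close to $c(t)$ — what rules it out is the uniform transversality of the arc's one-sided limit directions to the radial direction from $p$, which is exactly what the paper cites Lemma \ref{lem5.4} (together with Lemmas \ref{lem6.1}–\ref{lem6.2}) for, and which your $D_P<\epsilon_1$ bounds already encode.
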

\begin{proof}
Let $c: [0,1]\to C_N$ be a Jordan arc on the cut locus $C_N$ of a 
closed subset $N.$
Let $t_0\in[0,1)$ be arbitrarily given.
Suppose that $\alpha_t^+\ne\beta_t^+$ for $t=t_0.$ Choose any interior point $p$ of the $N$-segment $\alpha_{t_0}^+.$ Then, from Lemmas \ref{lem5.4}, \ref{lem6.1} and \ref{lem6.2},
it is clear that the point $p$ satisfies the hypothesis of Lemma \ref{lem4.4} for some interval $[t_0,t_0+\delta_0].$ Therefore, from Lemma \ref{lem4.4} it follows that 
$c|_{[t_0,t_0+\delta_0]  }$ is rectifiable.
Suppose next that $\alpha_t^+=\beta_t^+$ for $t=t_0.$
This means that $t_0=0$ and $c(0)$ admits a unique $N$-segment. From Lemmas \ref{lem4.3}
and \ref{lem6.3}, $c$ is rectifiable on $[0,\delta_2]=[t_0,t_0+\delta_2]$ for some positive $\delta_2.$
By reversing the parameter of $c$,
we have proved that for each $t_0\in[0,1],$ there exists an open interval containing
$t_0$ where $c$ is rectifiable. This implies that $c$ is rectifiable on $[0,1].$ 
$\qedd$
\end{proof}

\section{The topology induced by the intrinsic metric on the cut locus}\label{
sec7}

Let $N$ be a closed subset of a 2-dimensional (forward) complete Finsler manifold $M.$ 
By Theorems \ref{th3.6} and  \ref{th6.4}, any two cut points $y_1,y_2\in C_N$ can be joined by a rectifiable arc in the cut locus $C_N$ of $N$ 
if $y_1$ and $y_2$ are in the same connected component of the cut locus 
of the closed subset $N.$
Therefore, we can define the {\it intrinsic metric} $\delta$ on $C_N$ as follows:
\begin{enumerate}
\item if $y_1,y_2\in C_N$ are in the same connected component,
\begin{equation*}
\delta(y_1,y_2):=\inf\{l(c)|\ c\ \textrm{is a rectifiable arc in }C_N\ \textrm{joining } y_1\ \textrm{and } y_2\},
\end{equation*} 
\item otherwise $\delta(y_1,y_2):=+\infty$.
\end{enumerate}
Recall that  $l(c)$, {given in \eqref{d-length}},
denotes the length for a continuous curve $c\; : \:[a,b]\to M$. 
It is fundamental that $l(c)$ equals the integral length $L(c)$ defined in \eqref{integral length} for any piecewise $C^1$-curve $c$ (see  \cite{BM} for this proof). We will prove in Theorem \ref{th7.5} that $l(c)=L(c)$ holds for any Lipschitz 
 continuous
  curve $c.$ Notice that for any Lipschitz curve $c,$
  $c(t)$ is differentiable for almost all $t.$
 
The following theorem for a Lipschitz function is a key tool in the argument below.
The proof is {immediate by taking into account Theorem 7.29 in \cite{WZ}, for example}.
\begin{theorem}\label{th7.1}
If $f:[a,b]\to R$ is a Lipschitz function, its derivative function $f '(t)$ exists for almost all $t$ and 
$$f(t)=f(a)+\int_a^t f'(t) dt$$
holds for any $t\in[a,b].$ 
\end{theorem}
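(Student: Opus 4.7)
The plan is to reduce the theorem to a well-known statement in real analysis via the standard chain: Lipschitz $\Rightarrow$ absolutely continuous $\Rightarrow$ differentiable a.e.\ with the fundamental theorem of calculus. Accordingly, the argument splits into two short steps, both classical.

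First, I would verify that $f$ is absolutely continuous on $[a,b]$. Let $L$ be a Lipschitz constant for $f$, so $|f(s)-f(t)|\leq L|s-t|$ for all $s,t\in[a,b]$. Given any $\epsilon>0$, set $\delta:=\epsilon/L$. Then for any finite collection of pairwise disjoint subintervals $\{(a_i,b_i)\}_{i=1}^k\subset[a,b]$ with $\sum_{i=1}^k(b_i-a_i)<\delta$, one has
$$\sum_{i=1}^k|f(b_i)-f(a_i)|\leq L\sum_{i=1}^k(b_i-a_i)<\epsilon,$$
which is precisely the definition of absolute continuity.

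Second, I would invoke the classical theorem on absolutely continuous functions (this is exactly Theorem 7.29 of \cite{WZ}): any absolutely continuous function on $[a,b]$ is differentiable almost everywhere, its derivative $f'$ is Lebesgue integrable on $[a,b]$, and the identity
$$f(t)-f(a)=\int_a^t f'(s)\,ds$$
holds for every $t\in[a,b]$. Applying this to our $f$ yields both assertions of the theorem simultaneously. (As a sanity check, one may note that $|f'(t)|\leq L$ wherever it exists, so integrability is automatic from boundedness, and the estimate is consistent with the Lipschitz bound via the triangle inequality on the integral.)

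There is no substantial obstacle here: the only work is the transfer from the Lipschitz hypothesis to absolute continuity, which is a direct quantitative manipulation of the Lipschitz inequality. The deeper content — that absolute continuity implies a.e.\ differentiability plus the fundamental theorem of calculus — is a purely real-variable statement with no Finsler content, so it is appropriate simply to cite \cite{WZ}.
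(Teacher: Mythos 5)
Your proposal is correct and follows the same route as the paper, which simply cites Theorem 7.29 of \cite{WZ}; you merely make explicit the (standard) intermediate step that a Lipschitz function is absolutely continuous, with $\delta=\epsilon/L$. No issues.
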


\begin{lemma}\label{lem7.2}
 Let $\gamma:[0,1]\to M$ be a Lipschitz curve on $M$ and let $f:[0,1]\to [0,\infty)$ be the distance function $f(t):=d(q,\gamma(t))$ from a point $q.$   Then
 $f$ is a  Lipschitz function and 
 $f'(t)\leq F(\dot\gamma (t))$ holds for almost all $t\in (0,1)$. 

 \end{lemma}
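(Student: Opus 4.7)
The plan has two stages: first verify that $f$ is Lipschitz (so that $f'$ exists a.e.\ by Theorem \ref{th7.1}), then estimate $f'(t)$ at points where $\dot\gamma(t)$ also exists.

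For the Lipschitz property, I would apply the triangle inequality on both sides. For $0\le s<t\le 1$,
\begin{equation*}
f(t)-f(s)=d(q,\gamma(t))-d(q,\gamma(s))\le d(\gamma(s),\gamma(t)),\quad f(s)-f(t)\le d(\gamma(t),\gamma(s)).
\end{equation*}
Since $\gamma$ is Lipschitz on the compact set $[0,1]$, its image lies in some bounded forward ball around $q$, and the Lipschitz hypothesis yields a constant $K$ with $d(\gamma(s),\gamma(t))\le K(t-s)$. Applying Lemma \ref{lem4.1} on this bounded ball converts this into the backward bound $d(\gamma(t),\gamma(s))\le\lambda(a)K(t-s)$. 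Together, $|f(t)-f(s)|\le K''|t-s|$, so $f$ is Lipschitz and Theorem \ref{th7.1} applies.

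For the derivative bound, fix a point $t\in(0,1)$ where both $f'(t)$ and $\dot\gamma(t)$ exist (a full-measure set, since $\gamma$ is Lipschitz). For small $h>0$, the triangle inequality gives
\begin{equation*}
\frac{f(t+h)-f(t)}{h}\le\frac{d(\gamma(t),\gamma(t+h))}{h}.
\end{equation*}
The key step is to show $d(\gamma(t),\gamma(t+h))/h\to F(\dot\gamma(t))$ as $h\searrow0$. For $h$ small, $\gamma(t+h)$ lies in a strongly convex neighborhood of $\gamma(t)$, so the distance equals $F(\exp_{\gamma(t)}^{-1}\gamma(t+h))$. Since $\exp_{\gamma(t)}$ is $C^1$ at the origin with derivative the identity (the Whitehead property recalled in Section 1),
\begin{equation*}
\exp_{\gamma(t)}^{-1}\gamma(t+h)=h\dot\gamma(t)+o(h).
\end{equation*}
Using $1$-homogeneity of $F$, one has $F(h\dot\gamma(t)+o(h))/h=F(\dot\gamma(t)+o(1))$, which tends to $F(\dot\gamma(t))$ by continuity of $F$ on $\widetilde{TM}$. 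Taking $h\searrow0$ yields $f'(t)\le F(\dot\gamma(t))$.

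The main delicate point is the asymptotic $d(\gamma(t),\gamma(t+h))/h\to F(\dot\gamma(t))$, because $F$ is \emph{not} differentiable at the zero vector and $\exp_x$ is only $C^1$ (not $C^2$) at the zero section in general. The homogeneity argument above is exactly what circumvents this: we rescale the argument of $F$ by $1/h$ \emph{before} taking the limit, so we only need continuity of $F$ away from zero together with differentiability of $\gamma$ at $t$. Note also that we do not need to analyze $h\to 0^-$ separately, because $f$ is differentiable at $t$ and so the right derivative already equals $f'(t)$; the one-sided inequality suffices.
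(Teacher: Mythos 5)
Your proposal is correct and follows essentially the same route as the paper: the triangle inequality gives both the Lipschitz property and the bound of the difference quotient by $d(\gamma(t),\gamma(t+h))/h$, and the limit of that quotient is $F(\dot\gamma(t))$ by the Busemann--Mayer formula, which the paper simply cites while you reprove it via $d(\gamma(t),\gamma(t+h))=F(\exp_{\gamma(t)}^{-1}\gamma(t+h))$, the Whitehead $C^1$ property and positive homogeneity of $F$ --- exactly the computation the paper carries out in its proof of Lemma \ref{lem7.6}, to which its proof of Lemma \ref{lem7.2} also refers. Your extra care with the non-symmetric distance (invoking Lemma \ref{lem4.1} for the backward estimate) and the observation that the one-sided quotient suffices because $f'(t)$ exists are consistent with, and slightly more explicit than, the paper's argument.
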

 \begin{proof}

 It follows from the triangle inequality that the function  $f$ is Lipschitz.
 
From Theorem \ref{th7.1} it follows  that the curve $\gamma$ and
 function $f$ are differentiable almost everywhere.  Using again the  triangle inequality, for any small  positive number $h,$ we have
 \begin{equation*}
 f(t+h)= d(q,\gamma(t+h))\leq f(t)+d(\gamma(t),\gamma(t+h))
 \end{equation*}
 and therefore, if $f'(t)$ exists, then
 \begin{equation*}
 f'(t)=\lim_{h\searrow 0}\frac{f(t+h)-f(t)}{h}\leq  \lim_{h\searrow 0}\frac{d(\gamma(t),\gamma(t+h))}{h}=F(\dot\gamma(t)),
 \end{equation*}
 where we have used the well-known Busemann-Mayer formula (see the original paper \cite{BM}, or a more modern treatment \cite{BCS}, p. 161). See also the proof of Lemma \ref{lem7.6}.

 $\qedd$
 \end{proof}
 
\begin{lemma}\label{lem7.3}
The length $l(\gamma)$ of the Lipschitz curve $\gamma$ satisfies
\begin{equation*}
l(\gamma |_{[0,a]})+l(\gamma |_{[a,a+h]})=l(\gamma |_{[0,a+h]}),
\end{equation*} 
for any  non-negative $ h\leq1-a$. In particular, the function $l(c|_{[0,t]}) $ is Lipschitz.
\end{lemma}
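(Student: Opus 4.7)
The plan is to prove the two claims in order: first, additivity of the length functional under subdivision of the parameter interval, and second, the Lipschitz property of $t \mapsto l(\gamma|_{[0,t]})$, which will follow almost immediately from additivity together with the Lipschitz hypothesis on $\gamma$.

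For additivity, the argument splits into two inequalities. For the direction $l(\gamma|_{[0,a]})+l(\gamma|_{[a,a+h]}) \leq l(\gamma|_{[0,a+h]})$, I would take arbitrary partitions $0=t_0<\cdots<t_k=a$ of $[0,a]$ and $a=s_0<\cdots<s_m=a+h$ of $[a,a+h]$; their concatenation is a partition of $[0,a+h]$ whose associated Riemann-type sum equals the sum of the two individual sums, so passing to the supremum yields the inequality. For the reverse direction, I would take an arbitrary partition $0=u_0<\cdots<u_n=a+h$ and refine it by inserting the point $a$ (if not already one of the $u_j$): by the forward triangle inequality applied to the single new subdivision, $d(\gamma(u_j),\gamma(u_{j+1})) \le d(\gamma(u_j),\gamma(a)) + d(\gamma(a),\gamma(u_{j+1}))$, so refinement can only increase the sum. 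The refined partition splits as a partition of $[0,a]$ followed by a partition of $[a,a+h]$, and its sum is at most $l(\gamma|_{[0,a]})+l(\gamma|_{[a,a+h]})$; taking the supremum over the original partition gives the reverse inequality.

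For the Lipschitz property, let $L$ be a Lipschitz constant for $\gamma$, in the sense that $d(\gamma(u),\gamma(v)) \leq L(v-u)$ whenever $0 \leq u \leq v \leq 1$ (this is the natural one-sided Finsler notion consistent with how length is defined in \eqref{d-length}, and also all that is used when applying the triangle inequality as in Lemma \ref{lem7.2}). For any $0 \le s < t \le 1$, additivity gives
\begin{equation*}
l(\gamma|_{[0,t]}) - l(\gamma|_{[0,s]}) = l(\gamma|_{[s,t]}).
\end{equation*}
For any partition $s=r_0<\cdots<r_k=t$ of $[s,t]$, the Lipschitz hypothesis yields $\sum_i d(\gamma(r_{i-1}),\gamma(r_i)) \leq L\sum_i (r_i-r_{i-1}) = L(t-s)$; taking the supremum over partitions gives $l(\gamma|_{[s,t]}) \leq L(t-s)$. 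Combined with the obvious monotonicity (which is again a consequence of additivity, since lengths are nonnegative), this shows $|l(\gamma|_{[0,t]}) - l(\gamma|_{[0,s]})| \leq L|t-s|$.

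There is no substantive obstacle here; the only point worth care is that the definition of $l$ in \eqref{d-length} uses the forward distance $d(c(t_{i-1}),c(t_i))$ consistently with the partition order, so the triangle inequality in the additivity argument and the Lipschitz estimate both use $d$ in the correct direction, and the asymmetry of the Finsler distance causes no trouble.
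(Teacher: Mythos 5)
Your proof is correct and is exactly the direct verification from the definition \eqref{d-length} that the paper invokes with its one-line proof (``It follows directly from \eqref{d-length}''): concatenation of partitions for one inequality, refinement by inserting $a$ plus the (directed) triangle inequality for the other, and then the Lipschitz bound from additivity. Your care about the one-sided use of the quasi-distance $d$ is appropriate and consistent with the paper's conventions.
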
 
 \begin{proof}
 It follows directly from  (\ref{d-length}).
  $\qedd$
 \end{proof}

\begin{lemma}\label{lem7.4}
 For almost all $t\in(0,1)$ we have
 \begin{equation*}
 \frac{d}{dt}l(\gamma |_{[0,t]})\geq F(\dot\gamma (t)).
 \end{equation*}
 \end{lemma}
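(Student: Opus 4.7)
The plan is to combine the additivity of length from Lemma \ref{lem7.3} with the trivial one-term lower bound $l(\gamma|_{[t,t+h]})\ge d(\gamma(t),\gamma(t+h))$ coming from the definition \eqref{d-length}, and then to pass to the limit $h\searrow 0$ using the Busemann--Mayer formula that was already invoked in the proof of Lemma \ref{lem7.2}.

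First I would identify a full-measure set of good parameters. Since $\gamma$ is Lipschitz, in any coordinate chart its component functions are Lipschitz real-valued functions, so Theorem \ref{th7.1} (applied componentwise) gives that $\gamma$ is differentiable almost everywhere on $(0,1)$. By Lemma \ref{lem7.3} the function $s\mapsto l(\gamma|_{[0,s]})$ is Lipschitz, hence by Theorem \ref{th7.1} it too is differentiable at almost every $t\in(0,1)$. The intersection of these two sets has full measure in $(0,1)$, so it suffices to prove the desired inequality at every $t$ in this intersection.

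Next, for such a $t$ and sufficiently small $h>0$, Lemma \ref{lem7.3} yields
\begin{equation*}
l(\gamma|_{[0,t+h]})-l(\gamma|_{[0,t]})=l(\gamma|_{[t,t+h]})\ge d(\gamma(t),\gamma(t+h)),
\end{equation*}
where the last inequality follows by taking the two-point partition $\{t,t+h\}$ in the supremum defining $l$. Dividing by $h$ and letting $h\searrow 0$,
\begin{equation*}
\frac{d}{dt}l(\gamma|_{[0,t]})=\lim_{h\searrow 0}\frac{l(\gamma|_{[t,t+h]})}{h}\ge \lim_{h\searrow 0}\frac{d(\gamma(t),\gamma(t+h))}{h}=F(\dot\gamma(t)),
\end{equation*}
where the final equality is exactly the Busemann--Mayer formula, which was already quoted in the proof of Lemma \ref{lem7.2}.

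The one subtle point is the use of the Busemann--Mayer formula when $\gamma$ is merely Lipschitz: one needs that if $\gamma$ is differentiable at $t$ in a chart, then $d(\gamma(t),\gamma(t+h))/h\to F(\dot\gamma(t))$ as $h\searrow 0$. This is established by working inside a strongly convex ball about $\gamma(t)$ and comparing the Finslerian distance with the norm $F$ on $T_{\gamma(t)}M$ via $\exp_{\gamma(t)}$, which near the zero section is a $C^{1}$-diffeomorphism with identity differential (see the discussion after \eqref{first variation} and the references to \cite{BM},\cite{BCS}). Apart from this appeal, every step above is routine, so this should constitute the whole argument.
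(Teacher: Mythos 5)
Your proof is correct and follows essentially the same route as the paper: additivity from Lemma \ref{lem7.3}, the two-point-partition bound $l(\gamma|_{[t,t+h]})\geq d(\gamma(t),\gamma(t+h))$ from \eqref{d-length}, and the Busemann--Mayer limit $d(\gamma(t),\gamma(t+h))/h\to F(\dot\gamma(t))$ at points of differentiability. The extra care you take in justifying the full-measure set and the Busemann--Mayer step for a merely Lipschitz curve is a welcome elaboration of details the paper leaves implicit, but it is not a different argument.
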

 \begin{proof}
Suppose that the function $l(\gamma|_{[0,t]})$  and $\gamma$ are  differentiable at $t=t_0.$
 Using Lemma \ref{lem7.3}, we have
 \begin{equation*}
 \left(\frac{d}{dt}\right)_{t_0} l(\gamma |_{[0,t]})=\lim_{h\searrow 0}\frac{l(\gamma |_{[0,t_0+h]})-l(\gamma |_{[0,t_0]})}{h}=\lim_{h\searrow 0}\frac{l(\gamma |_{[t_0,t_0+h]})}{h},
 \end{equation*}
 and from  (\ref{d-length}) it follows that 
 \begin{equation*}
 \left(\frac{d}{dt}\right)_{t_0} l(\gamma |_{[0,t_0]})\geq \lim_{h\searrow 0}\frac{d(\gamma(t_0),\gamma(t_0+h))}{h}
 =F(\dot\gamma (t_0)).
 \end{equation*}
 $\qedd$
 \end{proof}

 We can formulate now one of the main results of this section.
 
 \begin{theorem}\label{th7.5}
 For any Lipschitz curve $\gamma:[0,1]\to M$,  $l(\gamma)$ equals $L(\gamma).$ 
 \end{theorem}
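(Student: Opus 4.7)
The plan is to prove the two inequalities $l(\gamma)\geq L(\gamma)$ and $l(\gamma)\leq L(\gamma)$ separately, using the lemmas already established in this section together with Theorem \ref{th7.1}. Note that since $\gamma$ is Lipschitz, $\dot\gamma(t)$ exists for almost every $t\in[0,1]$ and is bounded, so $F(\dot\gamma(t))$ is a bounded measurable function and $L(\gamma)=\int_0^1 F(\dot\gamma(t))\,dt$ is well defined.

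For the inequality $l(\gamma)\geq L(\gamma)$, I would apply Theorem \ref{th7.1} directly to the function $t\mapsto l(\gamma|_{[0,t]})$, which is Lipschitz by Lemma \ref{lem7.3}. Combined with Lemma \ref{lem7.4}, this gives
\begin{equation*}
l(\gamma)=l(\gamma|_{[0,1]})-l(\gamma|_{[0,0]})=\int_0^1 \frac{d}{dt}l(\gamma|_{[0,t]})\,dt\geq \int_0^1 F(\dot\gamma(t))\,dt=L(\gamma).
\end{equation*}

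For the reverse inequality $l(\gamma)\leq L(\gamma)$, the idea is to estimate each term in a partition sum using Lemma \ref{lem7.2}. For any partition $0=t_0<t_1<\cdots<t_k=1$, set $f_i(t):=d(\gamma(t_{i-1}),\gamma(t))$. Since the first argument is fixed, Lemma \ref{lem7.2} applies: $f_i$ is Lipschitz and $f_i'(t)\leq F(\dot\gamma(t))$ for almost every $t\in(t_{i-1},t_i)$. Theorem \ref{th7.1} then yields
\begin{equation*}
d(\gamma(t_{i-1}),\gamma(t_i))=f_i(t_i)-f_i(t_{i-1})=\int_{t_{i-1}}^{t_i} f_i'(t)\,dt\leq \int_{t_{i-1}}^{t_i} F(\dot\gamma(t))\,dt.
\end{equation*}
Summing over $i$ gives $\sum_{i=1}^k d(\gamma(t_{i-1}),\gamma(t_i))\leq L(\gamma)$, and taking the supremum over all partitions yields $l(\gamma)\leq L(\gamma)$.

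The potentially delicate point is the asymmetry of the Finsler distance: the inequalities in Lemmas \ref{lem7.2} and \ref{lem7.4} are stated with the forward distance where the first argument is fixed, which is precisely the convention needed in both directions of the argument above, so no additional care with the Busemann--Mayer formula is required beyond what is already packaged into those lemmas. The only subtlety is verifying that the set of measure-zero exceptional points for $f_i'$ can depend on $i$, but since there are only finitely many $i$ in any given partition, the union of exceptional sets is still of measure zero, so the integral estimate goes through for each fixed partition.
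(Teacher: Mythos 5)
Your proposal is correct and is essentially the same argument as the paper's: the upper bound $l(\gamma)\leq L(\gamma)$ via Theorem \ref{th7.1} and Lemma \ref{lem7.2} applied to $t\mapsto d(\gamma(t_{i-1}),\gamma(t))$ on each partition interval, and the lower bound via Theorem \ref{th7.1}, Lemma \ref{lem7.3} and Lemma \ref{lem7.4} applied to $t\mapsto l(\gamma|_{[0,t]})$. Your remarks on the asymmetry of $d$ and on the finitely many exceptional null sets per partition are sound and only make explicit what the paper leaves implicit.
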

 \begin{proof}
 For any  subdivision $t_0=0<t_1<t_2<\dots <t_n=1$ of $[0,1]$, from Theorem \ref{th7.1} and  Lemma \ref{lem7.2}, it follows that 
 \begin{equation*}
 d(c(t_i),c(t_{i+1}))=\int_{t_i}^{t_{i+1}}\frac{d}{dt}d(c(t_i),c(t))dt\leq \int_{t_i}^{t_{i+1}} F(\dot\gamma (t))dt.
 \end{equation*}
By summing, it follows that 
 \begin{equation}\label{eq7-2}
 \sum_{i=0}^{n-1} d(c(t_i),c(t_{i+1}))\leq \int_0^1F(\dot\gamma (t))dt=L(\gamma).
 \end{equation} 
  On the other hand, by Theorem \ref{th7.1} and Lemma \ref{lem7.4}, 
  \begin{equation}\label{eq7-3}
  l(\gamma)=\int_0^1\frac{d}{dt}l(\gamma |_{[0,t]})dt\geq \int_0^1 F(\dot\gamma(t))dt=L(\gamma).
  \end{equation}
 The conclusion follows from the relations \eqref{eq7-2} and \eqref{eq7-3}.
 $\qedd$
 \end{proof}

It can be seen that the function $\delta$ is a quasi-distance on $C_N$. 
It is clear that  Lemma \ref{lem4.1} holds for the quasi-distance function  $\delta.$ Thus $\lim_{n\to \infty}\delta(x,x_n)=0$ is equivalent to $\lim_{n\to \infty}\delta(x_n,x)=0$ for any sequence $\{x_n\}.$ 
In the case where  $F$ is absolute homogeneous, $\delta$ is a genuine distance function on $C_N$.

Let $c:[0,a]\to C_N$ be a Jordan arc parametrized by arclength, i.e., 
$l(c|_{[0,t]})=t$ for all $t\in[0,a]$, {where $l$ is given in \eqref{d-length}}.
By definition {we have}
\begin{equation}\label{eq7-4}
d(c(t_1),c(t_2))\leq \delta(c(t_1),c(t_2))
\leq l(c|_{[t_1,t_2]})=|t_1-t_2|
\end{equation}
for any $t_1,t_2\in[0,a].$
This implies that $c:[0,a]\to M$ is a Lipschitz map (with respect to $d$) and 
hence $c$ is differentiable for almost all $t.$ We will prove that $c$ is a unit speed curve, i.e., $F(\dot c(t))=1$ for almost all $t.$

\begin{lemma}\label{lem7.6}
For almost all $t,$
$F(\dot c(t))=1.$ Conversely, if $F(\dot c(t))=1$ for almost all $t,$
then $c$ is parametrized by arclength.
\end{lemma}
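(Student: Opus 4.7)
The plan is to exploit Theorem \ref{th7.5}, which equates the $d$-length $l$ with the integral length $L$ for any Lipschitz curve, together with the Lebesgue differentiation theorem. Recall that from \eqref{eq7-4} we already know the arclength-parametrized Jordan arc $c:[0,a]\to C_N$ is $1$-Lipschitz with respect to $d$, so Theorem \ref{th7.5} applies to every restriction $c|_{[0,t]}$.

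Concretely, I would first write
\begin{equation*}
t \;=\; l(c|_{[0,t]}) \;=\; L(c|_{[0,t]}) \;=\; \int_0^t F(\dot c(s))\,ds,
\end{equation*}
where the first equality is the arclength hypothesis, the second is Theorem \ref{th7.5}, and the third is the definition of $L$ (noting that $\dot c(s)$ exists for almost every $s$ since $c$ is Lipschitz, by Theorem \ref{th7.1}). Differentiating both sides with respect to $t$ at a Lebesgue point of the integrable function $s\mapsto F(\dot c(s))$ yields $F(\dot c(t))=1$ for almost every $t\in[0,a]$.

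For the converse, assume instead that $F(\dot c(t))=1$ for almost every $t$. Then integrating gives $L(c|_{[0,t]})=\int_0^t F(\dot c(s))\,ds=t$, and another application of Theorem \ref{th7.5} returns $l(c|_{[0,t]})=L(c|_{[0,t]})=t$, which is precisely the arclength parametrization condition.

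I expect no serious obstacle here: the only subtlety is verifying that $c$ is Lipschitz with respect to the Finsler (quasi-)distance $d$ so that Theorem \ref{th7.5} is applicable, and that $s\mapsto F(\dot c(s))$ is measurable and integrable so that Lebesgue differentiation applies. The first is handled by \eqref{eq7-4}, and the second follows from the fact that $\dot c$ is bounded in the $F$-norm almost everywhere (again by the Busemann--Mayer inequality used in Lemma \ref{lem7.2}), making $F\circ\dot c$ an $L^\infty$ function on $[0,a]$.
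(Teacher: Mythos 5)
Your proof is correct and takes essentially the same route as the paper: both hinge on Theorem \ref{th7.5}'s identity $l=L$ applied to the arclength-parametrized Lipschitz arc. The only cosmetic difference is that you extract the pointwise conclusion by applying the identity to every subinterval $[0,t]$ and invoking Lebesgue differentiation, whereas the paper first derives $F(\dot c(t))\le 1$ a.e.\ from \eqref{eq7-4} and then uses $\int_0^a\bigl(1-F(\dot c(t))\bigr)dt=0$ with a nonnegative integrand; you also spell out the (immediate) converse, which the paper's proof leaves implicit.
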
 
\begin{proof}
Suppose that $c$ is differentiable at $t_0\in(0,a).$
Since 
$$\lim_{t\searrow t_0}\frac{d(c(t_0),c(t))}{t-t_0}=
\lim_{t\searrow t_0}F\left(\frac{\exp^{-1}_{c(t_0)}c(t)}{t-t_0}\right)=
F((d\exp_{c(t_0)}^{-1})_{O_{c(t_0)}} \dot c(t_0))=F(\dot c(t_0)),$$
we get, by \eqref{eq7-4},
$F(\dot c(t_0))\leq 1.$ Hence, $F(\dot c(t))\leq 1$ for almost all $t.$
Since $a=l(c)=\int_0^a F(\dot c(t))dt$, by Theorem \ref{th7.5}, {it results}
$\int_0^a \left(1-F(\dot c(t))\right)dt=0.$
Thus, $F(\dot c(t))=1$ for almost all $t,$ since $1-F(\dot c(t))\geq 0.$
$\qedd$
\end{proof}

The following two lemmas {follow immediately} from Propositions \ref{prop2.1}, \ref{prop2.2} and
 Lemma \ref{lem7.6}.
 \begin{lemma}\label{lem7.7}
 For almost all $t$, {we have}
 $$\dot c(t)=v^f(t)^+=v^b(t)^-.$$
 \end{lemma}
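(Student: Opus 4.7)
The plan is to identify a full-measure set of parameters $t$ on which both equalities hold, then verify them by direct differentiation of the chord maps, using nothing more than the Whitehead property of $\exp$ and Lemma \ref{lem7.6}.

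First, I would fix the full-measure set. From \eqref{eq7-4} the arclength-parametrized curve $c\colon [0,a]\to M$ is $d$-Lipschitz, so Theorem \ref{th7.1} (applied in a local chart) gives differentiability of $c$ at almost every $t\in(0,a)$. By Lemma \ref{lem7.6} we moreover have $F(\dot c(t))=1$ for a.e.\ $t$. Let $E\subset(0,a)$ denote the intersection of these two full-measure sets; all claims will be proved for $t\in E$.

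Second, I would handle the forward limit $v^f(t)^+$. For $t\in E$, the map $h\mapsto \exp^{-1}_{c(t)}(c(t+h))$ is well defined and differentiable at $h=0$, and since $(d\exp_{c(t)})_{O_{c(t)}}$ is the identity on $T_{c(t)}M$ (Whitehead), one has
\begin{equation*}
\lim_{h\searrow 0}\frac{1}{h}\exp^{-1}_{c(t)}(c(t+h))=\dot c(t).
\end{equation*}
Applying the continuous function $F$ and using $F(\dot c(t))=1$, the normalizing denominator $F(\exp^{-1}_{c(t)}(c(t+h)))/h$ tends to $1$. Dividing numerator and denominator by $h$ gives
\begin{equation*}
v^f(t)^+=\lim_{h\searrow 0}\frac{\exp^{-1}_{c(t)}(c(t+h))}{F(\exp^{-1}_{c(t)}(c(t+h)))}=\dot c(t),
\end{equation*}
which is the assertion for $v^f(t)^+$. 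Note that Proposition \ref{prop2.1} is available here because any sequence of $N$-segments ending at $c(t+h_i)$, $h_i\searrow 0$, subconverges to an $N$-segment at $c(t)$, and the vector $v^f$ appearing in \eqref{limit formula forward} equals $\dot c(t)$ by the calculation above; Propositions \ref{prop5.1}--\ref{prop5.2} are not needed since we are not invoking the existence hypothesis $\alpha^+_t\neq\beta^+_t$.

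Third, the backward identity $v^b(t)^-=\dot c(t)$ goes the same way, with the one technical nuisance that the base point $c(s)$ varies with $s\nearrow t$. I would work in a smooth chart about $c(t)$ and use a smooth local trivialization of $TM$ to identify the tangent spaces $T_{c(s)}M$ with $T_{c(t)}M$ continuously in $s$. Joint continuity of the inverse exponential map together with differentiability of $c$ at $t$ then yields
\begin{equation*}
\lim_{s\nearrow t}\frac{1}{t-s}\exp^{-1}_{c(s)}(c(t))=\dot c(t)
\end{equation*}
in the trivialization; normalizing by $F$ and using $F(\dot c(t))=1$ gives $v^b(t)^-=\dot c(t)$, with the limit formula \eqref{backward limit} of Proposition \ref{prop2.2} confirming the geometric consistency. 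The only real obstacle is that last step of justifying convergence across moving tangent spaces, but this is exactly the setting in which Proposition \ref{prop2.2} was formulated, so the statement indeed ``follows immediately'' from Propositions \ref{prop2.1}, \ref{prop2.2} and Lemma \ref{lem7.6}.
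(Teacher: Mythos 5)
Your proof is correct, and it is essentially the argument the paper leaves implicit (the paper offers no written proof, asserting the lemma follows at once from Propositions \ref{prop2.1}, \ref{prop2.2} and Lemma \ref{lem7.6}): on the full-measure set where $c$ is differentiable with $F(\dot c(t))=1$, the normalized chord $\exp^{-1}_{c(t)}(c(t+h))/F(\exp^{-1}_{c(t)}(c(t+h)))$ converges to $\dot c(t)$ exactly as in the computation inside the proof of Lemma \ref{lem7.6}. The one point to tighten is the backward limit: joint \emph{continuity} of the inverse exponential map is not enough to control the difference quotient $\frac{1}{t-s}\exp^{-1}_{c(s)}(c(t))$ with moving base points; you need that $(x,y)\mapsto \exp_x^{-1}(y)$ is $C^1$ near the diagonal (Whitehead's theorem plus the inverse function theorem), so that the expansion $\exp^{-1}_{c(s)}(c(t))=(t-s)\dot c(t)+o(t-s)$ holds in a chart-induced trivialization --- note that Proposition \ref{prop2.2} assumes the limit \eqref{limit formula back} exists and so cannot by itself supply its existence.
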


 \begin{lemma}\label{lem7.8}
 Suppose that $c(t)$ is differentiable at $t=t_0.$
 If $d_N\circ c(t)$ is differentiable at $t=t_0,$ then
 $(d_N\circ c)'(t_0)=g_X(X,\dot c(t_0)),$ where $X$ denotes the velocity vector of an $N$-segment to $c(t_0).$
 
  \end{lemma}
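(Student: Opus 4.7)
The plan is to compute the one-sided derivatives of $d_N\circ c$ at $t_0$ via Propositions \ref{prop2.1} and \ref{prop2.2}: the right derivative will be exhibited as a minimum and the left derivative as a maximum of $g_X(X,\dot c(t_0))$ over the unit velocity vectors $X$ at $c(t_0)$ of $N$-segments in $\Gamma_N(c(t_0))$. Equality of the two one-sided derivatives, forced by the differentiability hypothesis, then pins $g_X(X,\dot c(t_0))$ down as a constant in $X$ equal to $(d_N\circ c)'(t_0)$.

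The first step uses differentiability of $c$ at $t_0$ to evaluate the limit vectors of \eqref{limit formula forward} and \eqref{limit formula back}. For any sequence $t_n\to t_0$,
\[
\exp^{-1}_{c(t_0)}c(t_n)=(t_n-t_0)\dot c(t_0)+o(|t_n-t_0|),\quad
\exp^{-1}_{c(t_n)}c(t_0)=(t_0-t_n)\dot c(t_n)+o(|t_0-t_n|),
\]
so both $v^f$ (for $t_n\searrow t_0$) and $v^b$ (for $t_n\nearrow t_0$) exist and equal $\dot c(t_0)/F(\dot c(t_0))$. The case $\dot c(t_0)=0$ is trivial, since $d_N\circ c$ is locally Lipschitz and both sides of the claimed identity vanish, so we may assume $F(\dot c(t_0))>0$. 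The Busemann--Mayer formula additionally yields
\[
\lim_{t\searrow t_0}\frac{d(c(t_0),c(t))}{t-t_0}=\lim_{t\nearrow t_0}\frac{d(c(t),c(t_0))}{t_0-t}=F(\dot c(t_0)).
\]

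Next, after extracting a convergent subsequence of $N$-segments to $c(t_n)$ with $t_n\searrow t_0$ (possible by compactness of the unit tangent sphere at $c(t_0)$ and forward completeness), Proposition \ref{prop2.1} together with its ``moreover'' assertion gives
\[
\lim_{t\searrow t_0}\frac{d_N(c(t))-d_N(c(t_0))}{d(c(t_0),c(t))}=\min_{X\in\Gamma_N(c(t_0))}g_X(X,v^f).
\]
Since the left-hand side depends only on $t$, not on the choice of $N$-segments, this value is independent of the subsequence. Multiplying by $F(\dot c(t_0))$ and using bilinearity of $g$ in its two vector arguments, the right derivative of $d_N\circ c$ at $t_0$ equals $\min_X g_X(X,\dot c(t_0))$. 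An analogous application of Proposition \ref{prop2.2} to $t_n\nearrow t_0$, together with the identity $\min_w g_w(-v^b,w)=-\max_w g_w(v^b,w)$ and the sign $d(c(t),c(t_0))/(t-t_0)\to -F(\dot c(t_0))$, gives the left derivative as $\max_X g_X(X,\dot c(t_0))$. The differentiability hypothesis forces min $=$ max, so $g_X(X,\dot c(t_0))$ is independent of $X\in\Gamma_N(c(t_0))$ and equals $(d_N\circ c)'(t_0)$.

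The main subtlety to watch is the sign and min/max bookkeeping in the backward half: the minus sign inside the $g$-form in the conclusion of Proposition \ref{prop2.2}, the non-symmetry of the Finsler distance (which forces the use of $d(c(t),c(t_0))$ rather than $d(c(t_0),c(t))$ in the denominator), and the negativity of $t-t_0$ on the left must be tracked simultaneously in order to convert the ``min'' of Proposition \ref{prop2.2} into the ``max'' appearing in the formula for the left derivative. Once this bookkeeping is done, the argument is essentially a direct comparison of one-sided limits.
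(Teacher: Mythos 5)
Your proof is correct and is essentially the elaboration the paper intends: the paper states that Lemma \ref{lem7.8} ``follows immediately'' from Propositions \ref{prop2.1}, \ref{prop2.2} and Lemma \ref{lem7.6}, and your argument is exactly that — identify $v^f$ and $v^b$ with $\dot c(t_0)/F(\dot c(t_0))$ via differentiability of $c$, read off the right derivative as a min and the left derivative as a max of $g_X(X,\dot c(t_0))$ over $\Gamma_N(c(t_0))$, and let differentiability of $d_N\circ c$ force min $=$ max (which also shows the value is independent of the chosen $N$-segment, as the statement implicitly requires). The sign bookkeeping in the backward half is handled correctly.
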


 \begin{remark}\label{rem7.9}
 It is clear that $(d_N\circ c)(t)$ is differentiable at $t=t_0$ if $c(t_0)$ is not a branch cut point and if $c(t)$ is differentiable at $t=t_0.$ Here a cut point $c(t)$ is called a {\it branch cut point} if $c(t)$ admits more than two 
 sectors. It will be proved in Lemma \ref{lem8.1} that there exist at most countably many branch cut points.  
 \end{remark}

\begin{lemma}\label{lem7.10}
If $\{c(t_n)\},$ where $t_n\in(0,a],$ is a sequence of points on the curve $c$ convergent to $c(0)$ (with respect to $d$), then 
$\lim_{n\to\infty}\delta(c(0),c(t_n))=0.$
\end{lemma}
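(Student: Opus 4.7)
The plan is to reduce the claim to showing that $t_n \to 0$ in $[0,a]$, since once this is known the conclusion follows at once from the arclength parametrization: inequality \eqref{eq7-4} gives $\delta(c(0), c(t_n)) \le |t_n - 0| = t_n \to 0$.

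To obtain $t_n \to 0$, I would invoke the elementary topological fact that a continuous injection from a compact space into a Hausdorff space is a homeomorphism onto its image. Here $c : [0,a] \to M$ is continuous and injective, $[0,a]$ is compact, and $M$ is Hausdorff, so $c$ is a homeomorphism onto $c([0,a])$ equipped with the subspace topology inherited from $M$. In particular, $c^{-1}$, viewed as a map from $c([0,a])$ to $[0,a]$, is continuous at $c(0)$.

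The one subtlety is that $d$ is only a quasi-distance, so I must ensure that the hypothesis $d(c(0), c(t_n)) \to 0$ actually yields convergence $c(t_n) \to c(0)$ in the manifold topology of $M$ (which is what is needed to apply the homeomorphism). Since the sequence eventually lies inside a forward ball containing $c([0,a])$, Lemma \ref{lem4.1} furnishes a constant $\lambda$ with $d(c(t_n), c(0)) \le \lambda\, d(c(0), c(t_n))$, so the backward convergence holds as well, and together with the standard fact that forward Finsler balls form a basis for the manifold topology, this implies $c(t_n) \to c(0)$ in $M$. Passing through the homeomorphism gives $t_n \to 0$, and then \eqref{eq7-4} delivers the lemma. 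I do not expect any genuine obstacle here; the content of the argument is simply the homeomorphism property of a continuous injective curve on a compact parameter interval, together with the comparison between forward and backward distances on a compact set.
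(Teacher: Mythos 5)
Your argument is correct and follows essentially the same route as the paper: both reduce the lemma to showing $t_n\to 0$ and then conclude via the arclength estimate $\delta(c(0),c(t_n))\leq l(c|_{[0,t_n]})=t_n$ from \eqref{eq7-4}. The paper gets $t_n\to 0$ by a direct subsequence argument (any limit $t_\infty$ of a convergent subsequence satisfies $d(c(0),c(t_\infty))=0$, hence $t_\infty=0$ by injectivity of $c$), which is the same content as your compact-to-Hausdorff homeomorphism fact; your extra detour through Lemma \ref{lem4.1} is harmless but unnecessary once one knows that forward balls generate the manifold topology.
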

\begin{proof}
Let $\{ t_{n_i}\}$ be any convergent subsequence of $\{t_n\}.$
Since $\lim_{n\to\infty} d(c(0),c(t_n))=0$ and $c$ is continuous,
we get $d(c(0),c(t_\infty))=0,$ where $t_\infty$ denotes the limit of $\{ t_{n_i}\}.$
Thus, $c(0)=c(t_\infty)$ and $t_\infty=0.$ This implies that $\lim_{n\to\infty}t_n=0.$
By definition,
$\delta(x,c(t_n))\leq l(c|_{[0,t_n]})=t_n.$ Therefore,
$\lim_{n\to\infty}\delta(c(0),c(t_n))=0.$

$\qedd$
\end{proof}

\begin{lemma}\label{lem7.11}
Let   $\{x_n\}$  be a sequence of cut points of $N$ convergent to 
a cut point  $x.$
If all $x_n$
lie in a  common sector $\Sigma_x$ at $x,$
then
$\lim_{n\to\infty} \delta(x,x_n)=0.$

\end{lemma}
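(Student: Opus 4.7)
The plan is to construct, for each sufficiently large $n$, a Jordan arc $c_n$ joining $x$ to $x_n$ in $C_N$, and to bound $l(c_n)$ by a quantity tending to $0$; since $\delta(x,x_n)\leq l(c_n)$ by definition of the intrinsic metric, this will finish the proof.

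First I would set up the arcs via Proposition \ref{prop3.5}: fix $\delta_0>0$ with $B_{4\delta_0}(x)$ strongly convex and let $\delta\in(0,\delta_0)$ be the resulting constant. For all $n$ large enough that $x_n\in B_\delta(x)$, Proposition \ref{prop3.5} provides a Jordan arc $c_n:[0,1]\to C_N\cap B_{\delta_0}(x)$ with $c_n(0)=x$ and $c_n(1)=x_n$. Because all $x_n$ lie in the common sector $\Sigma_x$, the construction in Proposition \ref{prop3.5}, which keeps the arc inside $W_I(x,x_n)\cap W_J(x,x_n)\subset \Sigma_x\cap\Sigma_{x_n}(x)$, ensures $c_n\subset \Sigma_x$.

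Next I would bound $l(c_n)$ by the angular-spread estimate of Lemma \ref{lem4.4}. Pick an $N$-segment $\alpha$ to $x$ bounding $\Sigma_x$ and let $p:=\alpha(s)$ with $s$ so close to $l:=d(N,x)$ that $p$ is not a cut point of $N$, $p\notin \overline{B_{\delta_0}(x)}$, and $\overline{B_{\delta_0}(x)}\cap \mathrm{Cut}(p)=\emptyset$ (shrinking $\delta_0$ if necessary). Then every $c_n$ lies in the domain of the polar chart based at $p$, and each minimal geodesic from $p$ to a point of $c_n$ hits $c_n$ only at its endpoint. Applying Lemmas \ref{lem6.1} and \ref{lem6.2} to each $c_n$ yields a local bound $\lim D_P<\epsilon_1$ at its starting point; to promote this to a bound valid along the entire arc and uniform in $n$, I would re-run the proofs of Lemmas \ref{lem6.1}--\ref{lem6.2} inside $B_{\delta_0}(x)$, replacing the compactness step by the observation that for $\delta_0$ small the direction $(\nabla d_p)_{c_n(t)}$ is uniformly close (on $\overline{B_{\delta_0}(x)}$) to the $N$-directions $w(t)^\pm$ at any branch point of $c_n$, so that the bound $\epsilon_1<1$ from Lemma 1.2.3 in \cite{S} can be taken independent of both $t$ and $n$. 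With such uniform $\epsilon_1$, Lemma \ref{lem4.4} gives
\[
l(c_n)\ \leq\ \frac{\lambda(\overline{a})\,\mathcal{C}(\overline{a})}{1-\epsilon_1}\,\theta_n,
\]
where $\theta_n$ is the angular extent at $p$ of $c_n$. Since $c_n\subset B_{\delta_0}(x)$, $p\notin \overline{B_{\delta_0}(x)}$ and $x_n\to x$, the angular coordinates at $p$ of the endpoints of $c_n$ converge to the common value $\theta(x)$; shrinking $\delta_0$ along a diagonal subsequence if needed, $\theta_n\to 0$ and hence $l(c_n)\to 0$.

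The main obstacle is the degenerate case in which $\Gamma_N(x)$ consists of a single $N$-segment $\alpha$: then the natural reference point $p$ has $(\nabla d_p)_x=\dot\alpha(l)$, which coincides with the $N$-direction at $x$, so $D_P\to 1$ and Lemma \ref{lem4.4} no longer gives a useful bound. In this case I would instead combine Lemma \ref{lem6.3} with Lemma \ref{lem4.3}: Lemma \ref{lem6.3} gives $\lim_{u\searrow t}D_N(c_n(t),c_n(u))>\epsilon_0$ for $t$ near $0$, and by continuity of the $N$-segment structure under $x_n\to x$ this bound extends with a uniform $\epsilon_0$ to the whole arc $c_n$ once $\delta_0$ is small enough. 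Lemma \ref{lem4.3} then yields $l(c_n)\leq \epsilon_0^{-1}\bigl(d(N,x_n)-d(N,x)\bigr)\to 0$. Checking this uniformity of Lemma \ref{lem6.3} across the whole arc $c_n$ (as opposed to a $c$-dependent initial interval) is the technical heart of the degenerate case.
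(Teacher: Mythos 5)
Your proposal has a genuine gap exactly at the point you yourself flag as ``the technical heart'': the uniformity of the estimates along the whole arcs $c_n$. The bounds in Lemmas \ref{lem6.1}, \ref{lem6.2} and \ref{lem6.3} are intrinsically local: they hold on an interval $[t_0,t_0+\delta_0]$ whose size depends on the given arc, because they rest on the one-sided convergence $w(t)^{\pm}\to w(t_0)^{+}$ along that arc. Your replacement claim --- that for $\delta_0$ small the direction $(\nabla d_p)_{c_n(t)}$ is uniformly close, throughout $\overline{B_{\delta_0}(x)}$, to the $N$-directions $w(t)^{\pm}$ at every point of every $c_n$ --- is false in general: $(\nabla d_p)_y$ is close to the direction of the particular segment $\alpha$ through $p$, whereas at a point $y$ of $c_n$ that is not close to $x$ (but still in the fixed ball) the $N$-segments to $y$, hence $w^{\pm}$ and the tangent directions $v^{f},v^{b}$ of the cut locus, have no relation to $\alpha$; a branch of $C_N$ can perfectly well run radially away from $p$ there, in which case the one-sided limits of $D_P$ approach $1$, the hypothesis of Lemma \ref{lem4.4} (each minimal geodesic from $p$ meets the arc only once) fails, and the identification of $\theta_n$ with the difference of the endpoint angles --- which is the only reason $\theta_n\to 0$ --- breaks down. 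The proposed fix of ``shrinking $\delta_0$ along a diagonal subsequence'' is circular: the Jordan arc of the local tree joining $x$ to $x_n$ is determined by $C_N$, and the assertion that it eventually stays in a prescribed small neighbourhood of $x$ is essentially the statement to be proved (it is precisely the long-detour scenario that must be excluded). The same objection applies to your degenerate case: Lemma \ref{lem6.3} gives $D_N>\epsilon_0$ only on an initial interval depending on the arc, and ``continuity of the $N$-segment structure'' does not extend it over a fixed ball uniformly in $n$.

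What is missing is a mechanism that controls the arcs globally, and this is where the paper's proof proceeds differently. It fixes one arc $c:=e_1$ from $x$ to $x_1$ and first shows, by a sector argument (if $e_n$ met $c$ only at $x$, the $N$-segments to interior points of small circular subarcs between them would converge to an $N$-segment to $x$ lying inside $\Sigma_x$, contradicting the definition of a sector), that every $e_n$ coincides with $c$ on an initial interval $[0,t_n]$ and branches off at $c(t_n)$. The initial piece is then handled by Lemma \ref{lem7.10} applied to the single fixed arc $c$, while along the tails the sectors containing $e_n(t,t+\delta)$ collapse onto an $N$-segment to $x$ as $n\to\infty$, so by Lemma \ref{lem7.8} there is a uniform $\epsilon_0\in(0,1)$ with $(d_N\circ e_n)'(t)<-\epsilon_0$ a.e.\ on $(t_n,a_n)$; integration gives $\delta(c(t_n),x_n)\le\epsilon_0^{-1}\bigl(d(N,c(t_n))-d(N,x_n)\bigr)\to 0$. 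It is this uniform monotonicity of $d_N$ along the tails, not an angular bound at an auxiliary point $p$, that rules out long excursions and supplies the uniform estimate your argument lacks. A minor additional slip: you require $p=\alpha(s)$ with $s$ close to $l=d(N,x)$ and simultaneously $p\notin\overline{B_{\delta_0}(x)}$, which is contradictory; but this is secondary to the main gap.
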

\begin{proof}
For each $n,$ let $e_n:[0,a_n]\to C_N$ denote a unit speed Jordan arc joining from $x$ to $x_n.$ 
It is clear that $c:=e_1$ and $e_n(n>1)$ are Jordan arcs emanating from the common cut  point $x.$
Suppose that $e_n(0,a_n]$ and $c(0,a],$ where $a:=a_1,$
 have no common point for some $n>1.$ 
Let $\{\epsilon_i\}$ be a decreasing sequence convergent to zero. Since $e_n(0,a_n]$ and $c(0,a]$ have no common point, we get the subarc $c_i$ (lying in $\Sigma_x$) of the circle centered at $x$ with radius $\epsilon_i$ cut off by $e_n$ and $c$ for each $i.$ Let $\gamma_i$ denote an $N$-segment to an interior point of $c_i$ for each $i.$ Then, any limit $N$-segment
of the sequence $\{\gamma_i\}$ as $i\to \infty,$ is an $N$-segment 
to $x$ lying in $\Sigma_x.$ 
This contradicts the definition of a sector. Therefore, there exists $t_n\in(0,a_n]$ satisfying $e_n=c$ on $[0,t_n]$ for each $n.$
From Theorem \ref{th3.6} and Lemma \ref{lem7.10},
$\lim_{n\to\infty} \delta(x,c(t_n))=0.$
Hence, by the triangle inequality, it is sufficient to prove 
$\lim_{n\to\infty}\delta (c(t_n),x_n)=0.$ It is obvious that each sector at $e_n(t)(t_n<t<a_n)$ containing $e_n(t,t+\delta)$ for small $\delta>0$ shrinks to an $N$-segment to $x$ as $n\to\infty.$ Therefore, by Lemma \ref{lem7.8}, there exists $\epsilon_0\in(0,1)$ such that
$$(d_N\circ e_n)'(t)<-\epsilon_0$$ 
for almost all $t\in(t_n,a_n)$ and for all $n.$
By integrating the equation above, we get
$\delta(c(t_n),x_n)<\frac{-1}{\epsilon_0} ( d(N,c(t_n))-d(N,x_n))$
for all $n.$ Since $\lim_{n\to\infty} d(N,c(t_n))=d(N,x)=\lim_{n\to\infty} d(N,x_n),$ we obtain $\lim_{n\to\infty} \delta(c(t_n),x_n)=0.$
$\qedd$
\end{proof}

\begin{lemma}\label{lem7.12}
Let   $\{x_n\}$  be a sequence of cut points of $N$ convergent to 
a cut point  $x.$
If there are no sectors at $x$  that contain an infinite subsequence of the sequence $\{x_n\},$ then
$\lim_{n\to\infty} \delta(x,x_n)=0.$
\end{lemma}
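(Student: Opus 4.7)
The plan is to argue by contradiction: suppose there exist $\eta>0$ and a subsequence (still denoted $\{x_n\}$) with $\delta(x,x_n)\geq\eta$, and derive $\delta(x,x_n)\to 0$ along this subsequence.

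First I would identify, for each $n$, a sector at $x$ associated with $x_n$. In the generic case $x_n$ lies in the interior of a unique sector $\Sigma_n$ at $x$; the borderline case where $x_n$ happens to lie on an $N$-segment to $x$ can be reduced to the generic one by replacing $x_n$ with a nearby cut point admitting at least two $N$-segments (dense by Proposition~\ref{prop2.5}) in the adjacent sector. By hypothesis, each sector contains only finitely many $x_n$, so after passing to a subsequence the sectors $\Sigma_n$ are pairwise distinct. Since infinitely many pairwise disjoint sectors at $x$ have total angular measure at $x$ bounded by that of the indicatrix $S_xM$, the angular widths of $\Sigma_n$ tend to zero. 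Letting $\alpha_n,\beta_n\in\Gamma_N(x)$ bound $\Sigma_n$ with velocities $X_n,Y_n\in S_xM$, a further extraction yields $X_n,Y_n\to W$ for a common $W\in S_xM$; by closedness of $\Gamma_N(x)$ under limits, $W$ is the velocity of some $\alpha_\infty\in\Gamma_N(x)$.

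Next, by Theorems~\ref{th3.6} and~\ref{th6.4}, for $n$ large there is a unique rectifiable unit-speed Jordan arc $c_n:[0,a_n]\to C_N$ in a fixed local-tree neighborhood of $x$ joining $x$ to $x_n$, with $\delta(x,x_n)\leq a_n$. This arc must enter $x$ along the branch contained in $\Sigma_n$, since arcs in $C_N$ cannot cross $N$-segments to $x$. Imitating the derivative estimate in the proof of Lemma~\ref{lem7.11}: as $n\to\infty$, both bounding $N$-segments of the forward sector at each $c_n(t)$ converge in direction to (the parallel transport of) $W$, and the characterization of $v^f(t)^+$ in Proposition~\ref{prop5.1} forces $\dot c_n(t)$ to approach the Finsler unit vector in the reverse direction, so that by Lemma~\ref{lem7.8} one obtains $\epsilon_0>0$ with
\begin{equation*}
(d_N\circ c_n)'(t)<-\epsilon_0
\end{equation*}
for almost every $t\in(0,a_n)$ and all sufficiently large $n$. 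Integrating this from $0$ to $a_n$ gives $a_n<(d_N(x)-d_N(x_n))/\epsilon_0\to 0$ by continuity of $d_N$, contradicting $a_n\geq\eta$.

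The main obstacle is establishing the derivative bound uniformly along the entire arc $c_n$, not merely near $t=0$. I would resolve this by an Arzel\`a--Ascoli compactness argument: any subsequential uniform limit of the unit-speed arcs $c_n|_{[0,\eta']}$, being a $1$-Lipschitz curve lying in the closures of the shrinking sectors $\Sigma_n$, must be a reparametrization of a subarc of $\alpha_\infty$ emanating from $x$ in the reverse direction. The stability of $N$-segments under limits then delivers the required uniform control of both the bounding $N$-segment velocities at $c_n(t)$ and the tangent vectors $\dot c_n(t)$, which is the crux of the proof.
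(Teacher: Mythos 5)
Your proposal follows essentially the same route as the paper: the hypothesis forces the sectors $\Sigma_n$ at $x$ containing $x_n$ to shrink to a single $N$-segment, one joins $x$ to $x_n$ by a unit-speed Jordan arc in the local tree lying in $\Sigma_n$, obtains a uniform bound $(d_N\circ c_n)'(t)<-\epsilon_0$ almost everywhere via Lemma \ref{lem7.8} (exactly the argument the paper imports from the proof of Lemma \ref{lem7.11}), and integrates to get $\delta(x,x_n)\leq a_n\leq \frac{1}{\epsilon_0}\bigl(d_N(x)-d_N(x_n)\bigr)\to 0$. Your contradiction framing, the treatment of the (in fact vacuous) case of $x_n$ lying on an $N$-segment to $x$, and the Arzel\`a--Ascoli discussion are only elaborations of steps the paper states directly or treats as obvious, so the two proofs coincide in substance.
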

\begin{proof}
For each $n,$
let $\Sigma_n$ denote the sector at $x$ containing $x_n.$
Then, from the  hypothesis of our lemma, the sequence $\{\Sigma_n\}$
shrinks to an $N$-segment to $x.$
By applying the argument for the pair $x_n$ and $c(t_n)$ in 
the proof of Lemma \ref{lem7.11} to the pair $x$ and $x_n,$
we get a number $\epsilon_0\in(0,1)$ satisfying 
$\delta(x,x_n)<\frac{1}{\epsilon_0}\left(   d(N,x)-d(N,x_n) \right)$
for each $n.$
Hence we obtain
$\lim_{n\to\infty}\delta(x,x_n)=0.$
$\qedd$
\end{proof}

\begin{theorem}\label{th7.13}
Let $N$ be a closed subset of 
  a forward complete 2-dimensional Finsler manifold $(M,F)$ and 
  $C_N$  the cut locus of $N.$
  Then, the topology of $C_N$ induced from the intrinsic metric $\delta$ 
 coincides with the induced topology of $C_N$ from $(M,F).$

\end{theorem}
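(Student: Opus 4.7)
The plan is to establish the coincidence of topologies by comparing convergent sequences in both directions. The easy direction, that $\delta$-convergence implies $d$-convergence, is essentially free: any rectifiable arc in $C_N$ joining $y$ to $z$ has length at least $d(y,z)$, so passing to the infimum yields $d(y,z)\leq \delta(y,z)$ for $y,z$ in the same connected component (while $\delta=+\infty$ in distinct components). Consequently every $d$-open set is $\delta$-open, and the $\delta$-topology is at least as fine as the $d$-topology.

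For the converse, given $x\in C_N$ and $\{x_n\}\subset C_N$ with $d(x,x_n)\to 0$, I would argue by contradiction. After passing to a subsequence assume $\delta(x,x_n)\geq \epsilon_0>0$ for all $n$. Fix $\delta_0>0$ with $B_{4\delta_0}(x)$ strongly convex, so that sectors at $x$ are defined; for $n$ large $x_n\in B_{\delta_0}(x)$, and because interior points of an $N$-segment to $x$ are never cut points (they fail maximality) while $C_N\cap N=\emptyset$, each such $x_n$ should lie in some sector $\Sigma_n$ at $x$.

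The core step is then a dichotomy on how the $x_n$ distribute among the sectors at $x$. Either some single sector $\Sigma_x$ at $x$ contains infinitely many of the $x_n$, in which case I pass to that subsequence and invoke Lemma \ref{lem7.11} to get $\delta(x,x_{n_k})\to 0$, contradicting $\delta(x,x_n)\geq \epsilon_0$; or no sector at $x$ contains infinitely many, in which case the hypothesis of Lemma \ref{lem7.12} holds verbatim and yields $\delta(x,x_n)\to 0$, again contradicting the standing assumption. The two cases are exhaustive, so $\delta(x,x_n)\to 0$ must hold.

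The substantial technical content of the theorem is already packaged in Lemmas \ref{lem7.11} and \ref{lem7.12}, where the derivative bound $(d_N\circ e)'(t)<-\epsilon_0$ along a Jordan arc in $C_N$ is integrated to control intrinsic distances by differences of $d_N$-values. Accordingly, the main obstacle in the synthesis is the sector-membership observation: one has to ensure every $x_n$ near $x$ genuinely occupies a sector rather than sitting on an $N$-segment to $x$ that bounds sectors. I expect this to be the most delicate bookkeeping point, possibly requiring a separate treatment of the (non-generic) case in which infinitely many $x_n$ lie on some $N$-segment to $x$; such a case should succumb to a variant of the argument in Lemma \ref{lem7.12} applied to the one-dimensional family of $N$-segments involved.
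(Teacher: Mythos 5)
Your proposal is correct and follows essentially the same route as the paper: the inequality $d\le\delta$ settles one direction, and the converse is proved by contradiction through the same dichotomy, namely a subsequence lying in a common sector (Lemma \ref{lem7.11}) versus no sector containing infinitely many terms (Lemma \ref{lem7.12}). Your final worry is moot: as observed at the start of Section \ref{sec3}, an $N$-segment to a cut point never passes through another cut point, so every $x_n\ne x$ close to $x$ genuinely lies in a sector at $x$ and no separate case is required.
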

\begin{proof}
It is sufficient to prove that for any $x\in C_N$ and any sequence $\{x_n\}$ of cut points of $N,$
$\lim_{n\to\infty}\delta(x,x_n)=0$ if and only if $\lim_{n\to\infty} d(x,x_n)=0.$
Since $d(x,y)\leq \delta(x,y)$ for any $x,y\in C_N,$ it is trivial that 
$\lim_{n\to\infty}\delta(x,x_n)=0$ implies $\lim_{n\to\infty}d(x,x_n)=0.$
Suppose that $\lim_{n\to\infty} d(x,x_n)=0.$
By assuming that there exist an infinite subsequence $\{ x_{n_i} \}$ of $\{x_n\}$ and a positive constant $\eta$  satisfying  $\delta(x,x_{n_i})>\eta$ for any $n_i,$
we will get a contradiction.
We may assume that all $x_{n_i}$ lie in a common sector $\Sigma_x$ at $x$ or each $x_{n_i}$ is contained in a mutually distinct sector at $x,$ by choosing a 
subsequence of $\{x_{n_i}\}$ if necessary.
From Lemmas \ref{lem7.11} and \ref{lem7.12}, we get
$0<\eta\leq\lim_{n\to\infty}\delta(x,x_{n_i})=0.$ This is a contradiction.
$\qedd$
\end{proof}

\section{ Proof of the completeness with respect to the {intrinsic} metric  $\delta$}\label{sec8}

\noindent
Let $\{x_n\}$ denote a forward Cauchy sequence 
{of points in $C_{N}$}
with respect to $\delta.$
Here, without loss of generality, we may assume that $\delta(x_n,x_m)<\infty$ for all $n<m,$ i.e., all $x_n$ lie in a common connected component of $C_N.$
Since $d\leq \delta,$ the sequence is a forward Cauchy sequence with respect to $d.$ 
{The metric space} $(M,d)$ is  forward complete, {therefore} there exists a unique limit {point}
$\lim_{n\to\infty} x_n=:q.$ Since $\lim_{n\to\infty}d(x_n,q)=\lim_{n\to\infty}d(q,x_n)=0,$ we may choose a positive integer $n_1$ and the positive number $\delta_0$ chosen in Section 3 for the cut point $x:=x_{n_1}$ so as to satisfy
$q\in B_{\delta_0}(x).$
We fix the point $x=x_{n_1}.$ Choose any small positive number $\epsilon$ 
so as to satisfy 
\begin{equation}\label{eq:8-1}
d(q,x)>2\epsilon
\end{equation}
and
\begin{equation}\label{eq:8-2}
B_\epsilon(q)\subset B_{\delta_0}(x)
\end{equation}
{and fix it.} 
Since the sequence $\{x_n\}$ is a forward Cauchy sequence with respect to $\delta,$
we may choose a positive integer $n_0:=n_0(\epsilon)$ in such a way that
\begin{equation}\label{eq:8-3}
\delta(x_{n_0},x_{n_0+k})<\frac{\epsilon}{2}
\end{equation} 
for all $k>0$
and 
\begin{equation}\label{eq:8-4}
d(q,x_{n_0})<\frac{\epsilon}{2}.
\end{equation}
For each integer $k\geq 1,$
let $c_k:[0,a_k]\to C_N$ denote a unit speed Jordan arc joining $x_{n_0}$ to $x_{n_0+k}.$ 
By \eqref{eq:8-1},
we may assume that
\begin{equation}\label{eq:8-5}
a_k<\frac{\epsilon}2.
\end{equation}
\begin{lemma}\label{lem8.1}
For each $k\geq 1,$
$c_k[0,a_k]$ is a subset of $B_\epsilon(q).$
\end{lemma}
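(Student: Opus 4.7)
The plan is to deduce the lemma directly from the triangle inequality, exploiting the fact that the unit-speed Jordan arc $c_k$ has total length less than $\epsilon/2$ together with the bound $d(q,x_{n_0})<\epsilon/2$ supplied by \eqref{eq:8-4}.

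First, I would bound the forward distance from $x_{n_0}$ to any point on the arc. For any $t\in[0,a_k]$, applying the definition of length \eqref{d-length} to the trivial partition $\{0,t\}$ of the subinterval $[0,t]$ gives
\begin{equation*}
d(c_k(0),c_k(t))\leq l(c_k|_{[0,t]}).
\end{equation*}
Since $c_k$ is parametrized by arclength we have $l(c_k|_{[0,t]})=t$, and combining this with \eqref{eq:8-5} yields
\begin{equation*}
d(x_{n_0},c_k(t))\leq t\leq a_k<\frac{\epsilon}{2}.
\end{equation*}

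Second, I would apply the triangle inequality for the (non-symmetric) Finslerian distance $d$, in the form $d(q,c_k(t))\leq d(q,x_{n_0})+d(x_{n_0},c_k(t))$. Using \eqref{eq:8-4} for the first term and the estimate above for the second term, I obtain
\begin{equation*}
d(q,c_k(t))<\frac{\epsilon}{2}+\frac{\epsilon}{2}=\epsilon,
\end{equation*}
so $c_k(t)\in B_\epsilon(q)$ for every $t\in[0,a_k]$, which is exactly the desired conclusion.

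There is no real obstacle here: the statement is essentially a bookkeeping assertion comparing the $\delta$-distance (which dominates the lengths $a_k$) to the ambient $d$-distance. The one subtlety to keep in mind is the asymmetry of $d$ in the Finsler setting, so one must verify that the directions in the triangle inequality match those appearing in \eqref{d-length}. They do: the length is defined using the forward distances $d(c(t_{i-1}),c(t_i))$, so $d(x_{n_0},c_k(t))\leq l(c_k|_{[0,t]})$ is the correct forward estimate, and the triangle inequality is then applied in the forward direction from $q$ through $x_{n_0}$ to $c_k(t)$.
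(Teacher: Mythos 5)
Your proof is correct. It reaches the same key intermediate estimate as the paper, namely $d(q,c_k(t))\leq d(q,x_{n_0})+t$, and then concludes identically via \eqref{eq:8-4} and \eqref{eq:8-5}; the difference lies in how that estimate is obtained. The paper derives it from the Lipschitz-function machinery of Section 7: it invokes Theorem \ref{th7.1} (a.e.\ differentiability and the integral representation of a Lipschitz function), Lemma \ref{lem7.2} (the bound $f'(t)\leq F(\dot\gamma(t))$ for $f(t)=d(q,\gamma(t))$), and Lemma \ref{lem7.6} (unit speed means $F(\dot c_k(t))=1$ a.e.), and then integrates. You instead observe that the trivial partition $\{0,t\}$ in the definition \eqref{d-length} gives $d(c_k(0),c_k(t))\leq l(c_k|_{[0,t]})=t$ directly, and then apply the forward triangle inequality through $x_{n_0}$. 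Your route is more elementary, bypassing the measure-theoretic apparatus entirely, and your remark about checking the orientation of the asymmetric distance is exactly the right point of care; the only small dependence you retain on Section 7 is that if ``unit speed'' is read as $F(\dot c_k)=1$ a.e.\ rather than as $l(c_k|_{[0,t]})=t$, one still needs the converse direction of Lemma \ref{lem7.6} to identify the two, which is harmless since the paper's own proof uses that lemma anyway.
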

\begin{proof} 
It follows from Theorem \ref{th7.1}, Lemmas \ref{lem7.2} and \ref{lem7.6}
that $d(q,c_k(t))\leq d(q,x_{n_0})+t$  for any $t\in[0,a_k].$
Since $a_k<\frac{\epsilon}2 $
by \eqref{eq:8-5} and 
$d(q,x_{n_0})<\frac{\epsilon}2$ by \eqref{eq:8-4}, we obtain
$d(q,c_k(t))<\epsilon$ for any $t\in[0,a_k].$
\end{proof}
$\qedd$

\begin{lemma}\label{lem8.2}
There exists a sector $\Sigma_{q_\epsilon}$ at a cut point 
$q_\epsilon,$ which is not an endpoint of $C_N,$
  such that 
$q\in \Sigma_{q_\epsilon}$ and   $d(q,q_\epsilon)=2\epsilon.$
Hence  
there exists a sector $\Sigma_{q_{\epsilon_1}}$ at a cut  point $q_{\epsilon_1}$ of $N,$ which is not an endpoint of $C_N,$ such that 
$q\in\Sigma_{q_{\epsilon_1}}\subset \Sigma_{q_\epsilon }$ 
and  $d(q,q_{\epsilon_1})=2\epsilon_1$
for some $0<\epsilon_1<\epsilon.$ 
\end{lemma}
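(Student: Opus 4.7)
The plan is to produce $q_\epsilon$ as a point on a Jordan arc in $C_N$ joining $x_{n_0}$ to $x:=x_{n_1}$ that lies exactly on the sphere $S_{2\epsilon}(q)$. Since $x_{n_0}$ and $x$ lie in the same connected component of $C_N$ (by the standing hypothesis at the start of Section~\ref{sec8}), Theorem~\ref{th3.6} together with Proposition~\ref{prop3.5} yields a Jordan arc $\Gamma:[0,L]\to C_N$ from $x_{n_0}$ to $x$, which is rectifiable by Theorem~\ref{th6.4}. The continuous function $t\mapsto d(q,\Gamma(t))$ takes the value $d(q,x_{n_0})<\epsilon/2$ at $t=0$ (by \eqref{eq:8-4}) and the value $d(q,x)>2\epsilon$ at $t=L$ (by \eqref{eq:8-1}), so the intermediate value theorem gives some $t_\ast\in(0,L)$ with $d(q,\Gamma(t_\ast))=2\epsilon$. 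Set $q_\epsilon:=\Gamma(t_\ast)$. Because $t_\ast$ is interior to $[0,L]$, the cut point $q_\epsilon$ is an interior point of the Jordan arc $\Gamma$, and by the observation at the start of Section~\ref{sec5} (namely $\alpha^\pm_t\neq\beta^\pm_t$ at interior parameters) it admits at least two distinct sectors, so $q_\epsilon$ is not an endpoint of $C_N$.

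Verifying that $q\in\Sigma_{q_\epsilon}$ for some sector at $q_\epsilon$ reduces to showing that $q$ does not lie on any $N$-segment to $q_\epsilon$. If the initial choice of $t_\ast$ places $q$ on such a segment, I would perturb it: the level set $\{t\in(0,L):d(q,\Gamma(t))=2\epsilon\}$ is in general not a singleton, and the subset of parameters for which $q$ lies on an $N$-segment to $\Gamma(t)$ is thin, since each $N$-segment through $q$ meets the sphere $S_{2\epsilon}(q)$ in at most one point, so only countably many values of $t$ are bad. Picking a good $t_\ast$ from the level set places $q$ in a unique connected component $\Sigma_{q_\epsilon}$ of $B_{3\delta_0}(x)\setminus\bigcup_{\gamma\in\Gamma_N(q_\epsilon)}\gamma([0,d(N,q_\epsilon)])$, which is the required sector; if no such $t_\ast$ can be found on this particular $\epsilon$-level set, replacing $\epsilon$ by a nearby value removes the degeneracy, since the set of exceptional values of $\epsilon$ for which the whole level set is bad has measure zero.

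The iterated statement is obtained by applying the same construction to the subarc $\Gamma|_{[0,t_\ast]}$ with any $\epsilon_1\in(0,\epsilon)$ satisfying $d(q,x_{n_0})<2\epsilon_1$: IVT produces $t_{\ast\ast}\in(0,t_\ast)$ with $d(q,\Gamma(t_{\ast\ast}))=2\epsilon_1$, and $q_{\epsilon_1}:=\Gamma(t_{\ast\ast})$ is an interior point of $\Gamma$ admitting a sector $\Sigma_{q_{\epsilon_1}}$ containing $q$ after the same perturbation if needed. The main obstacle will be establishing the nesting $\Sigma_{q_{\epsilon_1}}\subset\Sigma_{q_\epsilon}$: since $\Sigma_{q_{\epsilon_1}}$ is open, connected and contains $q\in\Sigma_{q_\epsilon}$, it suffices to show that $\Sigma_{q_{\epsilon_1}}$ does not meet any $N$-segment in $\Gamma_N(q_\epsilon)$. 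I would argue this by choosing $\epsilon_1$ small enough that the $N$-segments to $q_{\epsilon_1}$ bounding $\Sigma_{q_{\epsilon_1}}$ lie in a thin tube around the $N$-segments issuing near $q$, which by continuity of minimizing $N$-segments under perturbation of the endpoint remains strictly separated from the $N$-segments bounding $\Sigma_{q_\epsilon}$ on the side away from $q$. Making this stability statement precise is the principal technical point, but it rests only on the fact that the $N$-segments to a point in a compact region depend upper-semicontinuously on the endpoint, a property inherited from Propositions~\ref{prop2.1} and~\ref{prop2.2}.
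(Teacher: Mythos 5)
Your reduction of $q\in\Sigma_{q_\epsilon}$ to the statement that $q$ lies on no $N$-segment to $q_\epsilon$ is legitimate (since $q\in B_{\delta_0}(x)$, avoiding every segment of $\Gamma_N(q_\epsilon)$ does place $q$ in some sector at $q_\epsilon$), but the way you dispose of the bad case is a genuine gap, and it is exactly the difficulty the lemma is designed to overcome. Your claim that only countably many parameters in the level set $\{t:\,d(q,\Gamma(t))=2\epsilon\}$ are bad is unsupported: the bound you invoke is by the number of $N$-segments through $q$, which a priori need not be countable; worse, that level set may consist of a single parameter, so there may be nothing to choose from; and the final fallback, that the set of $\epsilon$ for which the whole level set is bad has measure zero, is asserted without any argument, while $\epsilon$ is not freely perturbable anyway, since $n_0=n_0(\epsilon)$ and the inequalities \eqref{eq:8-3}--\eqref{eq:8-5} are tied to it. The paper avoids all genericity: it first takes the point $c(t_0)$ on the arc with $d(q,c(t_0))=\epsilon$ (half the target distance); by Lemma \ref{lem8.1} each arc $c_k$ lies in $B_\epsilon(q)$, hence cannot meet any $N$-segment to $c(t_0)$ (interior points of $N$-segments are not cut points and the endpoint $c(t_0)$ lies outside $B_\epsilon(q)$), so every $x_{n_0+k}$ stays in the sector $\Sigma^+_{c(t_0)}$ containing $x_{n_0}$ and $q=\lim x_{n_0+k}$ lies in the \emph{closure} of that sector; then, since this closure is contained in the \emph{open} sector $\Sigma^+_{c(t)}$ for every $t<t_0$, taking $q_\epsilon$ on the arc with $d(q,q_\epsilon)=2\epsilon$ (which exists by the intermediate value theorem, with parameter less than $t_0$) gives $q\in\Sigma^+_{q_\epsilon}$ outright. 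The passage from distance $\epsilon$ to distance $2\epsilon$ is precisely what converts membership in a closure into membership in an open sector, with no perturbation needed; you never use Lemma \ref{lem8.1}, so your sector is not tied to the tail of the Cauchy sequence and your only route to $q\in\Sigma_{q_\epsilon}$ is the unproven genericity claim.

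The second assertion of the lemma is also not established in your proposal: you explicitly defer the nesting $\Sigma_{q_{\epsilon_1}}\subset\Sigma_{q_\epsilon}$ to an unspecified stability argument for $N$-segments, which you yourself call the principal technical point. In the paper's scheme the nesting comes for free from the same mechanism: both sectors are the sectors along the arc containing the tail of the sequence, and the closed sector at a point of the arc is contained in the open sector at any earlier point, so rerunning the construction with $0<\epsilon_1<\epsilon$ automatically produces a sector inside $\Sigma_{q_\epsilon}$. As written, your argument proves neither claim of the lemma.
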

\begin{proof}
Let $c:[0,b]\to C_N$ denote a unit speed Jordan arc joining $x$ to $x_{n_0}.$ Let $c(t_0)$ denote a point on  the arc $c$ with $d(q,c(t_0))=\epsilon.$
The existence of $c(t_0)$ is clear, since  $d(q,c(0))=d(q,x)>2\epsilon$ and $d(q,c(b))=d(q,x_{n_0})<\frac{\epsilon}{2}$ by \eqref{eq:8-1} and \eqref{eq:8-4}.
Let $\Sigma_{c(t_0)}^+$ denote the sector at $c(t_0)$ containing $x_{n_0}.$
By Lemma \ref{lem8.1},
$x_{n_0+k}\in\Sigma_{c(t_0)}^+$ for all $k\geq 1.$ Hence,
the point $q$ is an element of the closure of $\Sigma_{c(t_0)}^+.$ 
Since the closure of $\Sigma_{c(t_0)}^+$ is a  subset of $\Sigma_{c(t)}^+$ for any $t<t_0,$
$q$ is an element of  $\Sigma_{q_\epsilon}^+,$
where  $\Sigma_{c(t)}^+$ denotes the sector at $c(t)$ containing $x_{n_0}$
 and $q_\epsilon$ denotes a point on the arc $c$ with $d(q,q_\epsilon)=2\epsilon.$
Hence the sector $\Sigma_{q_\epsilon}^+$ has the required property.

\end{proof}
$\qedd$

\begin{lemma}\label{lem8.3}

Let $\{\Sigma_{n}\}$ be a decreasing sequence of sectors (i.e., $\Sigma_1\supset\Sigma_2\supset\Sigma_3\supset\dots$ ) 
{such that each $\Sigma_{n}$ is a sector} 
at a cut point $q_n$ of $N.$
Suppose that $\lim_{n\to\infty} q_n:=q$ exists and $q\in\Sigma{_n}$ for all $n.$
If $q_n$ is not an endpoint of $C_N$ for all $n,$ then $q$ is a cut point of $N.$

\end{lemma}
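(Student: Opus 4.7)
The plan is to extract two limiting $N$-segments to $q$ from the arcs bounding the sectors $\Sigma_n$, and split into cases depending on whether these limits coincide. Since each $q_n$ is not an endpoint of $C_N$, the sector $\Sigma_n$ is bounded by two distinct $N$-segments $\gamma_n^1, \gamma_n^2 \in \Gamma_N(q_n)$. Their lengths $d(N, q_n)$ converge to $l := d(N,q)$; by compactness of the Finslerian unit sphere and forward completeness, one may pass to a subsequence so that $\gamma_n^j \to \gamma^j$ uniformly on $[0,l]$, with both $\gamma^j$ being $N$-segments to $q$.

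In the first case $\gamma^1 \ne \gamma^2$, so $q$ admits two distinct $N$-segments, with distinct tangent vectors at $q$ by uniqueness of the geodesic initial value problem. If $\gamma^1$ could be extended as an $N$-segment $\tilde\gamma^1 : [0, l+\varepsilon] \to M$, then concatenating $\gamma^2$ with $\tilde\gamma^1|_{[l, l+\varepsilon]}$ would give a piecewise geodesic from $N$ to $\tilde\gamma^1(l+\varepsilon)$ of length $l+\varepsilon$ with a genuine corner at $q$. The first variation formula \eqref{first variation} shows that such a broken curve can be strictly shortened near the corner, contradicting $d(N, \tilde\gamma^1(l+\varepsilon)) = l+\varepsilon$. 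Hence $\gamma^1$ is a maximal $N$-segment ending at $q$, so $q \in C_N$.

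In the second case $\gamma^1 = \gamma^2 =: \gamma$. We may assume $\Gamma_N(q) = \{\gamma\}$, for otherwise we could pick two distinct $N$-segments to $q$ and reduce to the first case. The boundary arcs of $\Sigma_n$ coalesce in the limit, and their tangents at $q_n$ both converge to $\dot\gamma(l)$, so the angular opening of $\Sigma_n$ at its vertex $q_n$ tends to zero. Using the nesting $\Sigma_n \supset \Sigma_{n+1}$, the convergence $q_n \to q$, and the condition $q \in \Sigma_n$, one verifies that $\Sigma_n$ must be the \emph{thin} wedge between $\gamma_n^1$ and $\gamma_n^2$: the complementary \emph{thick} alternative is incompatible with the nesting, since the boundary $N$-segments of a thick $\Sigma_{n+1}$ together with their common endpoint $q_{n+1}$ would have to lie inside $\overline{\Sigma_n}$, forcing crossings with the boundary arcs of $\Sigma_n$ that violate the sector structure. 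Consequently $\bigcap_n \Sigma_n \subseteq \gamma$, so $q$ lies on $\gamma$ and is in fact its endpoint. Finally, if $\gamma$ were extensible as an $N$-segment $\tilde\gamma : [0, l+\varepsilon] \to M$, then $\tilde\gamma(l+t)$ would lie in the open set $\Sigma_n$ for arbitrarily small $t > 0$ since $q \in \Sigma_n$; but the thin-wedge description shows $\Sigma_n$ sits locally on the $N$-side of $q_n$ (between the two arcs returning to $N$), and hence cannot contain points past $q$ in the direction $\dot\gamma(l)$. This contradiction shows $\gamma$ is maximal, so $q \in C_N$.

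The main obstacle is the second case: justifying that the nested sectors must be the thin wedges, and isolating the precise local incompatibility between the extension of $\gamma$ past $q$ and the backward orientation of the sectors near their vertices. This requires a delicate local analysis of the relative positions of $q_n$, the two bounding $N$-segments, and the point $q$ as $n \to \infty$, and is where the Finsler-specific geometry (in particular the nonreversibility of geodesics) must be handled with care.
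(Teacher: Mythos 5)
Your Case 1 (the two subsequential limits of the bounding $N$-segments are distinct) is fine and is exactly the paper's first case. The genuine gap is in Case 2, and it is precisely at the point you yourself flag. Your intended contradiction is: (i) points $\tilde\gamma(l+t)$ just past $q$ lie in $\Sigma_n$ because $q\in\Sigma_n$ and $\Sigma_n$ is open, while (ii) the ``thin wedge'' $\Sigma_n$ ``cannot contain points past $q$ in the direction $\dot\gamma(l)$.'' Statement (ii) is false for each fixed $n$: since $\Sigma_n$ is open and contains $q$, it contains \emph{all} points sufficiently close to $q$, including points past $q$ in the direction $\dot\gamma(l)$; asserting otherwise would amount to putting $q$ on $\partial\Sigma_n$. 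What is true is only an asymptotic statement: for a \emph{fixed} $t>0$, $\tilde\gamma(l+t)\notin\Sigma_n$ once $n$ is large, because the sectors shrink to $\gamma([0,l])$. But that by itself is no contradiction with the extendability of $\gamma$ -- there is nothing wrong with an $N$-segment leaving a sector -- so your argument never actually rules out that $\tilde\gamma|_{[0,l+\varepsilon]}$ is an $N$-segment.

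The missing idea (which is how the paper closes Case 2) is to use the crossing itself. Since $q=\tilde\gamma(l)\in\Sigma_n$ and $\tilde\gamma(l+t)\notin\Sigma_n$ for large $n$, the extension must meet $\partial\Sigma_n$, i.e.\ the bounding $N$-segments $\alpha_n$ or $\beta_n$ (or their common endpoint $q_n$), at some parameter $l_n\in(l,l+t)$, and since $\alpha_n,\beta_n\to\gamma$ and $q_n\to q$ one gets $l_n\searrow l$. Now $\tilde\gamma|_{[0,l_n]}$ cannot be an $N$-segment: it passes through the open sector $\Sigma_n$ yet ends on an $N$-segment to $q_n$ (two $N$-segments cannot meet at a point interior to one of them without coinciding, and $\tilde\gamma|_{[0,l_n]}$ cannot be an $N$-segment to $q_n$ itself since the $N$-segments to $q_n$ are disjoint from $\Sigma_n\ni q$). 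Hence the maximal $N$-segment extension of $\gamma$ has length at most $l_n$ for every large $n$, so exactly $l$, and $q$ is a cut point. Your ``thin wedge'' discussion, besides being argued only heuristically (the clean reason the thick alternative is impossible is that nesting would force the complementary thin wedges to be increasing while being squeezed onto the single arc $\gamma$, which is absurd for nonempty open sets), does not substitute for this crossing argument, which is the essential content of the paper's proof of the lemma.
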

\begin{proof}
For each $n,$ let $\alpha_n$ and $\beta_n$ denote the $N$-segments that form part of the boundary of $\Sigma_{n}.$ Since $q_n$
 is not an endpoint of $C_N
$ for each $n,$
$\alpha_n\ne\beta_n$ for each $n.$
Suppose first that there exists a
subsequence of $\{\Sigma_{n}\}$
which does not shrink to a single $N$-segment. Then, there exist at least  two $N$-segments
to $q.$
This implies that $q$ is a cut point of $N.$ Suppose next that the sequence shrinks to a single 
$N$-segment.
Then, $\{\alpha_n\}$ and $\{\beta_n\}$ shrink to a common $N$-segment $\gamma :[0,l]\to M$
to $q=\gamma(l).$
Let $\tilde \gamma :[0,\infty)\to M$ denote the geodesic extension of $\gamma.$ For any sufficiently large $n,$ $\tilde\gamma$ intersect the $N$-segment $\alpha_n$ or $\beta_n$ at a point $\tilde \gamma(l_n), $ $l_n>l.$ 
Since $\lim_{n\to\infty} \tilde\gamma(l_n)=\gamma(l)=q,$ and 
 $\tilde\gamma|_{[0,l_n]}$ is not an $N$-segment,  $q$ is a cut point of $N.$

\end{proof}
$\qedd$

\begin{theorem}\label{th8.4}
Let $N$ be  a closed subset of  a forward complete 2-dimensional Finsler manifold $(M,F).$
Then the cut locus $C_N$ of $N$ with the {intrinsic} distance $\delta$ is forward complete.

\end{theorem}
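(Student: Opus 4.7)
The plan is to identify a candidate limit $q\in M$ via forward $d$-completeness, use Lemmas \ref{lem8.1}--\ref{lem8.3} to show that $q$ actually lies in the cut locus, and finally upgrade $d$-convergence to $\delta$-convergence by means of Theorem \ref{th7.13}.

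Since $d\leq \delta$, the forward $\delta$-Cauchy sequence $\{x_n\}$ is also forward $d$-Cauchy. By forward completeness of $(M,d)$, there exists a unique limit point $q\in M$ with $d(x_n,q)\to 0$, and by Lemma \ref{lem4.1} also $d(q,x_n)\to 0$. With this $q$, the entire setup at the beginning of Section \ref{sec8} is in force for every sufficiently small $\epsilon>0$, so that Lemma \ref{lem8.2} is applicable.

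To show $q\in C_N$, I would iterate Lemma \ref{lem8.2}. Start with an $\epsilon^{(1)}>0$ small enough that $d(q,x)>2\epsilon^{(1)}$ and $B_{\epsilon^{(1)}}(q)\subset B_{\delta_0}(x)$, and extract from Lemma \ref{lem8.2} a sector $\Sigma^{(1)}:=\Sigma_{q_{\epsilon^{(1)}}}$ at a non-endpoint cut point $q^{(1)}$ with $q\in\Sigma^{(1)}$ and $d(q,q^{(1)})=2\epsilon^{(1)}$. The ``hence'' part of Lemma \ref{lem8.2} then yields $\epsilon^{(2)}<\epsilon^{(1)}$ and a nested sector $\Sigma^{(2)}\subset\Sigma^{(1)}$ at a non-endpoint cut point $q^{(2)}$ with $d(q,q^{(2)})=2\epsilon^{(2)}$. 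Applying this inductively---at each step tracing the Jordan arc $c$ used in the proof of Lemma \ref{lem8.2} further toward its far endpoint so as to force $\epsilon^{(k+1)}\leq \epsilon^{(k)}/2$---I obtain a decreasing sequence of sectors $\Sigma^{(1)}\supset \Sigma^{(2)}\supset\cdots$ at non-endpoint cut points $q^{(k)}$ with $q^{(k)}\to q$ and $q\in \Sigma^{(k)}$ for all $k$. Lemma \ref{lem8.3} then immediately gives $q\in C_N$.

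With $q\in C_N$ in hand, Theorem \ref{th7.13} applies: since $d(q,x_n)\to 0$, we obtain $\delta(q,x_n)\to 0$, and since Lemma \ref{lem4.1} holds for the quasi-distance $\delta$, this is equivalent to $\delta(x_n,q)\to 0$. Hence the forward $\delta$-Cauchy sequence $\{x_n\}$ converges to $q\in C_N$ in the $\delta$-metric, proving forward completeness of $(C_N,\delta)$. The main obstacle I anticipate is the iteration step: one must verify that the parameter $\epsilon^{(k)}$ produced by the repeated application of Lemma \ref{lem8.2} really can be driven to zero while preserving the nesting of sectors. This amounts to revisiting the construction inside the proof of Lemma \ref{lem8.2} and checking that the choice of the point $c(t_0)$ on the Jordan arc $c$ is monotone in $t_0$, so that sliding $t_0$ toward $x_{n_0}$ simultaneously shrinks $\epsilon$ and the associated sector. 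Once this monotonicity is in place, the rest of the argument is a routine chaining of the results already established.
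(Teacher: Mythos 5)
Your proposal is correct and follows essentially the same route as the paper's proof: obtain the candidate limit $q$ from forward completeness of $(M,d)$ via $d\le\delta$, produce a nested sequence of sectors at non-endpoint cut points converging to $q$ by iterating Lemma \ref{lem8.2}, conclude $q\in C_N$ by Lemma \ref{lem8.3}, and upgrade to $\delta$-convergence by Theorem \ref{th7.13}. The iteration issue you flag is real but harmless, since the proof of Lemma \ref{lem8.2} shows the sectors $\Sigma_{c(t)}^+$ are nested as $t$ decreases along the Jordan arc, so any sequence $\epsilon^{(k)}\searrow 0$ yields the required decreasing sectors.
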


\begin{proof}
Let $\{x_n\}$ denote a forward Cauchy sequence with respect to $\delta.$ Then, there exists a unique limit $\lim_{n\to \infty} x_n=:q$ with respect to $d,$ since $d\leq \delta$ and $(M,d)$ is forward complete. By Lemma \ref{lem8.2}, there exists a decreasing sequence of sectors $\Sigma_{n}$ at cut points $q_n$ such that $\lim_{n\to\infty} q_n=q$ with respect to $d$ and none of $q_n$ is an endpoint of $C_N.$ Hence, by Lemma \ref{lem8.3}, $q$ is  a cut point of $N.$ From Theorem \ref{th7.13}, we obtain $\lim_{n\to\infty}\delta(x_n,q)=0.$

\end{proof}
$\qedd$

\section{The proof of Theorem C}\label{sec9}

For each sector $\Sigma$ at a cut point $x$ of $N,$ 
we define a number $\mu(\Sigma)$ by
$$\mu(\Sigma):=\max\{g_X(X,Y),g_Y(Y,X)\},$$
where $X$ and $Y$ denote the velocity vectors at $x$ of the two unit speed
$N$-segments that form part of the boundary of $\Sigma.$
It follows from Lemma 1.2.3  in \cite{S} that $\mu(\Sigma)<1$ if $X\ne Y.$
{Recall that} if a cut point $x$ of $N$ admits more than two sectors, then $x$ is called a branch cut point {(see Remark \ref{rem7.9})}.

For each $n=1,2,3,...,$ let $A_n$ denote the subset of $C_N$ which consists of all branch cut points that admit three sectors $\Sigma^i,i=1,2,3,$ satisfying $\mu(\Sigma^i)\leq 1-\frac{1}{n}.$ 
It is clear that $\bigcup_{n=1}^\infty A_n$ is the set of all branch cut points of $C_N.$

 \begin{lemma}\label{lem9.1}
For each $n,$  the set $A_n$ is locally finite.  Hence, the set of all branch cut points is at most countable.
\end{lemma}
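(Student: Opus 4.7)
\emph{Plan.} I argue by contradiction. Suppose $A_n$ is not locally finite; then there exist $x_\infty\in M$ and mutually distinct points $\{x_k\}\subset A_n$ with $x_k\to x_\infty$ in $(M,F)$. By definition of $A_n$, for each $k$ I may pick three $N$-segments $\gamma_k^1,\gamma_k^2,\gamma_k^3$ to $x_k$, arising as the pairwise boundaries of three sectors at $x_k$, whose velocities $X_k^i:=\dot\gamma_k^i(d_N(x_k))$ satisfy
\[
\max\bigl\{g_{X_k^i}(X_k^i,X_k^j),\;g_{X_k^j}(X_k^j,X_k^i)\bigr\}\le 1-\tfrac{1}{n}\quad(i\ne j).
\]
By forward completeness and compactness of the indicatrix bundle over a neighborhood of $x_\infty$, a diagonal extraction gives a subsequence along which each family $\{\gamma_k^i\}_k$ converges (as unit-speed geodesics) to an $N$-segment $\gamma_\infty^i:[0,d_N(x_\infty)]\to M$ ending at $x_\infty$ with velocity $X_\infty^i$. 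Continuity of $g$ preserves the pairwise inequality in the limit, so by Lemma 1.2.3 in \cite{S} the vectors $X_\infty^1,X_\infty^2,X_\infty^3$ are mutually distinct; hence $x_\infty\in C_N$ admits at least three distinct sectors $\Sigma_\infty^1,\Sigma_\infty^2,\Sigma_\infty^3$ bounded pairwise by the $\gamma_\infty^i$.

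\emph{Localization and crossing point.} After a further subsequence, all $x_k$ lie in one sector at $x_\infty$, say $\Sigma_\infty^1$ bounded by $\gamma_\infty^1$ and $\gamma_\infty^2$. Fix $\epsilon\in(0,\delta_0)$ so that $B_\epsilon(x_\infty)$ is contained in a strongly convex neighborhood as in Section \ref{sec3}, and consider the third $N$-segment $\gamma_k^3$. Its endpoint $x_k$ lies in $\Sigma_\infty^1$, but the $C^1$-convergence $\gamma_k^3\to\gamma_\infty^3$ together with the fact that $\gamma_\infty^3\cap B_\epsilon(x_\infty)$ lies on the common boundary of $\Sigma_\infty^2$ and $\Sigma_\infty^3$, i.e.\ is disjoint from the open sector $\Sigma_\infty^1$, forces $\gamma_k^3$ to cross $\gamma_\infty^1\cup\gamma_\infty^2$ at some point $y_k\in B_\epsilon(x_\infty)$ for all large $k$. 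Shrinking $\epsilon$ while taking $k$ correspondingly large yields $y_k\to x_\infty$.

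\emph{Deriving the contradiction, and main obstacle.} Without loss of generality the crossing is with $\gamma_\infty^1$. Both $\gamma_k^3|_{[0,d_N(y_k)]}$ and $\gamma_\infty^1|_{[0,d_N(y_k)]}$ are $N$-segments to $y_k$. If they coincide as parametrized curves, then $\dot\gamma_k^3(d_N(y_k))=\dot\gamma_\infty^1(d_N(y_k))$; passing to the limit using $y_k\to x_\infty$, continuity of $d_N$, and $C^1$-dependence of geodesics on their initial data, one obtains $X_\infty^3=X_\infty^1$, contradicting $g_{X_\infty^1}(X_\infty^1,X_\infty^3)\le 1-\tfrac{1}{n}<1$. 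Otherwise they are distinct $N$-segments to $y_k$ with different velocities there; then the broken curve obtained by following $\gamma_\infty^1$ from $N$ to $y_k$ and $\gamma_k^3$ from $y_k$ to $x_k$ joins $N$ to $x_k$ with total length $d_N(y_k)+(d_N(x_k)-d_N(y_k))=d_N(x_k)$, so it is length-minimizing, yet has a genuine corner at $y_k$, contradicting the standard fact that length-minimizers in a Finsler manifold are smooth geodesics. In either case we have a contradiction, so $A_n$ is locally finite; countability of $\bigcup_n A_n$, which equals the set of all branch cut points, then follows from second countability of $M$. The delicate step is the crossing claim itself: one must justify, working only with the Finslerian sector structure of Section \ref{sec3} in a strongly convex ball and without recourse to a global notion of angle, that the $C^1$-proximity of $\gamma_k^3$ to $\gamma_\infty^3$ forces a transversal intersection with $\partial\Sigma_\infty^1$ inside the arbitrarily small ball $B_\epsilon(x_\infty)$.
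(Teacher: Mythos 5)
Your opening moves coincide with the paper's: extract an accumulation point $x_\infty$, pass to limit $N$-segments $\gamma_\infty^1,\gamma_\infty^2,\gamma_\infty^3$, and use the persistence of the bound $\mu\le 1-\frac1n$ in the limit together with Lemma 1.2.3 of \cite{S} to conclude that the three limits have pairwise distinct velocities at $x_\infty$. The gaps lie in how you convert three distinct limit segments into a contradiction. First, the reduction ``after a further subsequence, all $x_k$ lie in one sector at $x_\infty$'' is not available in general: $x_\infty$ may admit infinitely many sectors, each containing only finitely many of the $x_k$. The paper treats this as a separate case; there the mutually distinct sectors containing the $x_k$ shrink to a single $N$-segment, so all three limit segments coincide, contradicting their pairwise distinctness. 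Your crossing argument says nothing about this case.

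Second, even when a common sector $\Sigma$ at $x_\infty$ exists, you assert without justification that $\Sigma$ is bounded by $\gamma_\infty^1$ and $\gamma_\infty^2$ and that $\gamma_\infty^3$ lies on the far side, bounding $\Sigma_\infty^2$ and $\Sigma_\infty^3$. The relevant fact is simpler and incompatible with that picture: an interior point of an $N$-segment admits a unique $N$-segment, so an $N$-segment to a point of $\Sigma$ cannot cross an $N$-segment to $x_\infty$; hence near $x_\infty$ every $\gamma_k^i$ stays in $\overline{\Sigma}$, and therefore every limit $\gamma_\infty^i$ lies in $\overline{\Sigma}$. But the only $N$-segments to $x_\infty$ contained in $\overline{\Sigma}$ are the two segments bounding $\Sigma$, so at most two of the $\gamma_\infty^i$ can be distinct --- this is the paper's one-line contradiction. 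In particular, the configuration on which your ``crossing point'' and ``broken minimizer'' steps rest (with $\gamma_\infty^3$ outside $\overline{\Sigma_\infty^1}$) is itself an unproved --- indeed false --- intermediate claim, and the crossing statement that you yourself flag as the delicate step is left unestablished. What is actually needed, and missing from your write-up, is the non-crossing property of $N$-segments just quoted, applied once, plus the infinitely-many-sectors case; the transversality and corner arguments are then superfluous. The final deduction of countability from local finiteness and second countability is fine.
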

\begin{proof}
By assuming that there exists a ball $B_r(x) ,(0<r<\infty)$
containing infinitely many elements $z_\alpha$ of $A_n$ for some $n,$
we will get a contradiction.
The set $\{z_\alpha\}$ has an accumulation point $z.$ The  point $z$ is also an element
of $A_n,$ since any two $N$-segments forming the three sectors $\Sigma$ with $\mu(\Sigma)\leq 1-\frac{1}{n}<1$ cannot shrink to a single $N$-segment.
 Let $\{z_j\}$ denote a sequence of points of $\{z_\alpha\}$ convergent to $z.$ Without loss of generality, we may assume that all $z_j$ are in a common sector at $z$ or each $z_j$ lies in a mutually distinct sector at $z,$ by taking a subsequence if necessary.
 Thus, there exist at most two limit $N$-segments to $z$ of the sequence of  $N$-segments to $z_j.$ 
  This contradicts the fact $z_j\in A_n.$

$\qedd$
\end{proof}
By Lemma \ref{lem9.1}, there exist at most countably many branch cut points, but we do not know if the closure  $A_N$ of  $\bigcup_{n=1}^\infty A_n$  is countable or not.
Here, we choose a tree $T\subset C_N\cap B_{\delta_0}(x),$ where $x$ is a cut point of $N$ and $\delta_0$ is the positive number chosen in Section \ref{sec3}.   
We define a subset $T^b$ of $T$ by
$$T^b:=\{ y\in A_N \cap T\:| \: y \textrm{ admits a sector having no branch cut points in } T\}.$$
\begin{lemma}\label{lem9.2}
The set  $T^b$ is countable.
\end{lemma}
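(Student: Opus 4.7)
The plan is to inject $T^b$ with multiplicity at most $2$ into the set of connected components of $T\setminus A_N$, and then to invoke separability. For each $y\in T^b$, I would first pick a free sector $\Sigma_y$ (i.e., $\Sigma_y\cap T\cap\bigcup_{n=1}^\infty A_n=\emptyset$) and strengthen this to $\Sigma_y\cap T\cap A_N=\emptyset$: if $z$ lay in the latter intersection, it would be an $M$-limit of branch cut points $b_k\in\bigcup_{n=1}^\infty A_n$, but since $\Sigma_y$ is open in $M$ and the tree $T=\Sigma\cap C_N\cap B_\delta(x)$ of Theorem \ref{th3.6} is open in $C_N$, the $b_k$ would eventually lie in $\Sigma_y\cap T$, contradicting freeness. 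The same openness together with Lemma \ref{lem3.2} also shows that $\Sigma_y\cap T$ is nonempty arbitrarily close to $y$, so I may select a Jordan arc $\alpha_y\subset T$ emanating from $y$ into $\Sigma_y$ with $\alpha_y\setminus\{y\}\subset T\setminus A_N$.

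The key structural step is then to analyze the connected component $C_y$ of $T\setminus A_N$ containing $\alpha_y\setminus\{y\}$; by construction $y\in\partial_T C_y$. For any $z\in C_y$ one has $z\notin A_N\supset\bigcup_{n=1}^\infty A_n$, so $z$ is not a branch cut point and admits at most two sectors; in particular $z$ has at most two directions of $T$ at it. Consequently $C_y$ is a connected open subtree of $T$ in which every point has valence at most $2$, forcing $C_y$ to be topologically an open arc. Its $T$-closure therefore has at most two endpoints, so $|\partial_T C_y|\le 2$.

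Finally, since $T\subset M$ is separable and $T\setminus A_N$ is open and locally connected in the tree $T$, this open set has at most countably many connected components. The assignment $y\mapsto C_y$ is at most $2$-to-$1$ by the previous step, whence $T^b$ is countable. The main obstacle is the structural claim that $C_y$ is an open arc; it relies essentially on the fact that every topological branch point of $T$ (a point of $T$-valence $\ge 3$) must possess at least three geometric sectors and hence be a branch cut point of $N$, so such points lie in $\bigcup_{n=1}^\infty A_n\subset A_N$ and are excised in passing to $T\setminus A_N$. A secondary subtlety is to rule out the vacuous possibility $\Sigma_y\cap T=\emptyset$, which Lemma \ref{lem3.2} and the openness of $T$ in $C_N$ preclude for every $y\in T$.
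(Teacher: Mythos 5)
Your route is genuinely different from the paper's, but as written it has gaps that are not merely cosmetic. The most serious one is the countability of the components of $T\setminus A_N$: separability alone does not give this. A compact separable space in which any two points are joined by a unique Jordan arc can fail to be locally connected, and then an open subset can have uncountably many components (the cone over a Cantor set with its apex removed is the standard example); so you must actually prove that the components you use are open in $T$, i.e.\ that $T$, or at least $T\setminus A_N$, is locally connected, and you only assert this. A second gap is the statement that the tree $T$ is open in $C_N$. In the paper $T$ is the set $\Sigma\cap C_N\cap B_{\delta}(x)$ of Theorem \ref{th3.6}, which is only shown to be a \emph{neighborhood of the single point} $x$ in $C_N$, not an open set; near $S_{\delta}(x)$ or near the $N$-segments bounding $\Sigma$ it need not contain a $C_N$-neighborhood of its points. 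Your upgrade from ``no branch cut points of $\bigcup_n A_n$ in $\Sigma_y\cap T$'' to ``$\Sigma_y\cap T\cap A_N=\emptyset$'' uses exactly this openness, and without the upgrade the arc $\alpha_y\setminus\{y\}$ may meet $A_N$, so the component $C_y$ is not well defined and $y$ need not lie on the boundary of a single component. Finally, the structural claims that a connected subset of $T$ all of whose points have valence at most $2$ is an open arc, that the $T$-boundary of such a component has at most two points, and that a point of $T$-valence at least $3$ has at least three sectors (this last is provable by the shrinking-sector argument of Lemma \ref{lem7.11}, but you only invoke it) are all left unproved, and each needs an argument in this non-locally-finite setting (recall Remark \ref{rem9.4}).

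For comparison, the paper's proof avoids all of this internal topology of $T$: for each $y\in T^b$ the branch-free sector at $y$ cuts off a nondegenerate subarc $c_y$ of the circle $S_{\delta_0}(x)$, and for $y_1\neq y_2$ these subarcs are disjoint (otherwise one sector would capture the other point's branch, contradicting freeness); since a circle carries at most countably many pairwise disjoint subarcs, $T^b$ is countable. If you want to salvage your approach, the honest work is precisely the local connectedness of the relevant set and the boundary bound for components; the paper's device of pushing the sectors out to $S_{\delta_0}(x)$ is what lets it trade those delicate tree-theoretic facts for the elementary fact about disjoint arcs in a circle.
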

\begin{proof}
For each element $y\in T^b,$
there exists the subarc $c_y$ of $S_{\delta_0}(x)$ cut off by the sector at $y$ that has no branch cut points. It is clear that $c_{y_1}\cap c_{y_2}=\emptyset$  if $y_1\ne y_2.$ Since there exist at most countably many non-overlapping subarcs of $S_{\delta_0}(x),$ {it follows that} $T^b$ is countable. 
$\qedd$
\end{proof}

\begin{theorem}\label{th9.3}
The set $C_N\setminus C_N^{\ e}$ is a union of countably many Jordan arcs, 
 where $C_N^{\ e}$ denotes the set of all endpoints of $C_N$.
\end{theorem}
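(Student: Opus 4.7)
The plan is to cover $C_N$ by countably many tree neighborhoods from Theorem \ref{th3.6} and, inside each tree, use a countable dense subset to produce Jordan arcs whose union covers every non-endpoint. Since $M$ (and hence $C_N$) is second countable, Theorem \ref{th3.6} yields a countable cover $\{V_n\}_{n \in \N}$ of $C_N$ by open tree-neighborhoods $V_n = \Sigma \cap C_N \cap B_\delta(x_n)$, inside which any two points are joined by a unique Jordan arc. By Proposition \ref{prop2.5} the cut points admitting at least two $N$-segments are dense in $C_N$; such points have at least two sectors and therefore lie outside $C_N^{\ e}$. In each tree $V_n$ I would pick a basepoint $p_n \in V_n \setminus C_N^{\ e}$ and a countable dense subset $\{q_{n,k}\}_{k \in \N} \subseteq V_n \setminus C_N^{\ e}$, and let $J_{n,k}$ denote the unique Jordan arc in $V_n$ joining $p_n$ to $q_{n,k}$.

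To verify $\bigcup_{n,k} J_{n,k} \supseteq C_N \setminus C_N^{\ e}$, fix $p \in C_N \setminus C_N^{\ e}$ and choose $n$ with $p \in V_n$. Because $p$ admits at least two sectors in $C_N$ and $V_n$ is an open neighborhood of $p$, these sectors induce at least two distinct local branches of the tree $V_n$ at $p$, so $V_n \setminus \{p\}$ has at least two connected components. At least one such component $W$ does not contain $p_n$, and since $W$ is open in $V_n$ it contains some $q_{n,k}$. The unique Jordan arc $J_{n,k}$ from $p_n$ to $q_{n,k}$ in the tree $V_n$ must then pass through $p$, because $p$ separates $p_n$ from $q_{n,k}$; hence $p \in J_{n,k}$. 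The collection $\{J_{n,k}\}_{n,k \in \N}$ is countable, and each $J_{n,k}$ is a Jordan arc in $C_N$.

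The main obstacle is the rigorous identification of ``sectors of $p$ in $C_N$'' with ``local components of $V_n \setminus \{p\}$''. One inclusion comes from Lemma \ref{lem3.2}, since each sector contains cut points accumulating to $p$ that lie in the neighborhood $V_n$; the reverse inclusion, that distinct sectors at $p$ yield distinct local components of $V_n$, requires the sector-compatible construction of $V_n$ in Theorem \ref{th3.6} together with the fact that an $N$-segment to $p$ cannot be crossed transversally by a curve in $C_N$. A related technical point is ensuring each $J_{n,k}$ lies inside $C_N \setminus C_N^{\ e}$: this is automatic at the endpoints by the choice of $p_n$ and $q_{n,k}$, and at interior points follows from the two-directions argument above. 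An alternative route worth noting is to first observe from Lemma \ref{lem9.1} that every branch cut point of $C_N$ belongs to some $A_n$, since each of its sectors $\Sigma$ satisfies $\mu(\Sigma) < 1$, hence $\mu(\Sigma) \leq 1 - 1/n$ for some $n$; the set of branch cut points is then at most countable, the regular part $C_N \setminus (C_N^{\ e} \cup \bigcup_n A_n)$ is locally a $1$-manifold that decomposes into countably many open Jordan arcs, and one adjoins countably many arcs passing through the branch cut points.
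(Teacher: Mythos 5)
Your main argument is correct, and it is genuinely different from the paper's. The paper works inside a single tree $T\subset C_N\cap B_{\delta_0}(x)$ and organizes the decomposition around the branch cut points: it invokes Lemma \ref{lem9.1} (the branch points form a countable set, via the sets $A_n$) and Lemma \ref{lem9.2} (the set $T^b$ of points admitting a branch-free sector is countable), fixes a branch point $x_1$, joins it to each of the countably many points of $T^b\cup(T\cap\bigcup_n A_n)$ by the unique arcs $m_i$, and shows every remaining non-endpoint lies on one of countably many branch-free arcs emanating from some $x_j$. You instead run a density/separation argument that bypasses Lemmas \ref{lem9.1} and \ref{lem9.2} altogether: a non-endpoint $p$ disconnects its tree neighborhood into at least two components, one of which misses the basepoint $p_n$ and, being open (local connectedness follows from Theorem \ref{th3.6}) and meeting the dense set of two-segment points (Proposition \ref{prop2.5}), contains some $q_{n,k}$, so $p$ lies on $J_{n,k}$. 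The technical point you flag does close as you suggest: every component of $V_n\setminus\{p\}$ lies in a single sector at $p$, because $C_N\setminus\{p\}$ is disjoint from the $N$-segments to $p$ (a cut point cannot be an interior point of an $N$-segment), while Lemma \ref{lem3.2} puts points of $C_N\cap V_n$ in each sector arbitrarily close to $p$; hence at least two components exist. What each approach buys: yours is shorter and more purely topological, but the arcs $J_{n,k}$ overlap heavily and carry no structural information; the paper's decomposition is indexed by branch points and branch-free arcs, which is exactly what is reused in the proof of Theorem \ref{th9.7} (where $d_N(C_N^b)$ must be shown to have measure zero, so Lemma \ref{lem9.1} is needed anyway). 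One caution: your ``alternative route'' at the end is weaker than your main argument, since the complement of the branch points and endpoints need not be locally a $1$-manifold (branch points can accumulate on it), so the claimed decomposition of the regular part into open arcs would need the separation argument after all. Your main route does not depend on it.
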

\begin{proof}
By Lemmas \ref{lem9.1} and \ref{lem9.2}, there exist at most countably many elements
$\{x_i\:| \: i=1,2,3,...\}$ in $T^b\cup(T\cap\bigcup_{n=1}^\infty A_n).$
 Since  it is trivial {to see} that $T$ consists of a unique Jordan arc if  $T$
  has no branch  cut points, we may assume that $x_1$ is a branch cut point. For each $x_i(i>1),$ let $m_i:[0,a_i]\to T$ be the unique Jordan arc joining from  $x_1$ to  $x_i.$ Choose any $q\in T\setminus \bigcup_{i=2}^\infty |m_i|,$ where $|m_i|:=m_i[0,a_i].$ 
Let $c:[0,b]\to T$ be the Jordan arc joining from $x_1$ to $q.$ If $q$ is not an endpoint of the cut locus $C_N, $ $c$ has an extension $\tilde c\: : \:
[0,\tilde b]\to T.$ Then, $q\notin \bigcup_{i=2}^\infty |m_i|$ implies that $\tilde c|_{(b,\tilde b)}$ does not intersect any $x_i$ and hence  has no branch cut points.
Let $b_1(<b)$ be the maximum number $b_1$ with $c(b_1)=x_j$ for some $j.$
Then, $q$ lies in a Jordan arc (without any branch cut points except $x_j$) emanating from some $x_j.$ At each $x_i$ there exist at most countably many such Jordan arcs in $T.$ Therefore,
$T\setminus(C_N^{\ e}\cup\bigcup_{i=2}^\infty|m_i|)  $ is a union of countably many Jordan arcs.
This implies that $C_N\setminus C_N^{\ e} $ is a union of countably many Jordan arcs.

$\qedd$
\end{proof}

\begin{remark}\label{rem9.4}
Recall that even in the Riemannian case, there are compact convex surfaces of revolution such that the cut locus of a point on the surface admits a branch cut points with infinitely many ramifying branches (\cite{GS}).
\end{remark}

 A critical point of  the distance function on a Finsler manifold  is defined analogously to the Riemannian distance function (see \cite{C}), i.e.,  
a point $q\in M\setminus N$ is called a {\it  critical} point of the distance  function $d_N$ from $N$ if for any 
 tangent vector  $v$ at $q,$
there exists an  $N$-segment $\gamma:[0,l]\to M$ to $q=\gamma(l)$ such that $g_{\dot\gamma(l)}(\dot\gamma(l),v)\leq 0.$ It is trivial that any critical point of $d_N$ admits at least two $N$-segments, and hence any critical point is
 a cut point of $N.$
Notice that the Gromov isotopy lemma (\cite{C}) holds for the distance function $d_N.$ The proof of the isotopy lemma for the distance function $d_N$ is the same as the Riemannian case. 

\begin{lemma}\label{lem9.5}
Let $c:[a,b]\to C_N$ be a unit speed Jordan arc. Suppose that $c(t)$ and $(d_N\circ c)(t)$ are  differentiable at $t=t_0\in(a,b).$  If $c(t_0)$ is a critical point of $d_N,$ then
$(d_N\circ c)'(t_0)=0.$   

\end{lemma}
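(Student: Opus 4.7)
The plan is to extract the value of $(d_N\circ c)'(t_0)$ from Lemma \ref{lem7.8} and then pit the criticality condition at $c(t_0)$ against itself by testing it on $\dot c(t_0)$ and on $-\dot c(t_0)$.

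First, since $c(t)$ is differentiable at $t_0$ and $c$ is unit speed (Lemma \ref{lem7.6}), Lemma \ref{lem7.8} gives, for \emph{every} $N$-segment $\gamma$ to $c(t_0)$ with velocity vector $X$ at $c(t_0)$,
\begin{equation*}
(d_N\circ c)'(t_0)=g_X(X,\dot c(t_0)).
\end{equation*}
In particular the right-hand side takes the same value for every $\gamma\in\Gamma_N(c(t_0))$, which is precisely the content we shall exploit.

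Next, the criticality of $c(t_0)$ applied to the tangent vector $v=\dot c(t_0)$ produces an $N$-segment with velocity $X$ satisfying $g_X(X,\dot c(t_0))\le 0$. Plugging this $X$ into the formula above yields $(d_N\circ c)'(t_0)\le 0$. Applying criticality once more, this time to $v=-\dot c(t_0)$, produces another $N$-segment with velocity $X'$ satisfying $g_{X'}(X',-\dot c(t_0))\le 0$; by linearity of $g_{X'}(X',\cdot)$ in the second slot this is equivalent to $g_{X'}(X',\dot c(t_0))\ge 0$, so the formula gives $(d_N\circ c)'(t_0)\ge 0$. Combining the two inequalities, $(d_N\circ c)'(t_0)=0$.

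There is essentially no obstacle in this argument: the whole content is delivered by Lemma \ref{lem7.8}, whose strength is precisely that the value $g_X(X,\dot c(t_0))$ is independent of the chosen $N$-segment once $d_N\circ c$ is known to be differentiable. The only subtle point worth underlining in the write-up is that criticality is an \emph{existential} statement about $N$-segments, and Lemma \ref{lem7.8} makes it a \emph{universal} one for the specific tangent vector $\dot c(t_0)$; both are used, once with sign $+$ and once with sign $-$.
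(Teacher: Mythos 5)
Your proof is correct and follows essentially the same route as the paper's: both arguments reduce to the first-variation identity $(d_N\circ c)'(t_0)=g_X(X,\dot c(t_0))$ and play it off against the definition of a critical point. The paper argues by contradiction, assuming the derivative is positive (after reversing the parameter of $c$, which plays exactly the role of your test vector $-\dot c(t_0)$) and invoking the minimum characterization from Proposition \ref{prop2.1}; your direct two-sided test via Lemma \ref{lem7.8} is an equivalent repackaging of the same idea.
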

\begin{proof}
By supposing that $(d_N\circ c)
'(t_0)\ne 0,$ we will get a contradiction.
We may assume that $(d_N\circ c)'(t_0)>0,$ by reversing the parameter of $c$ if necessary. From Proposition \ref{prop2.1}, it follows that
$$0<(d_N\circ c)'(t_0)=g_X(X,\dot c(t_0))\leq g_Y(Y,\dot c(t_0))$$
for the velocity vector $Y$ at $c(t_0)$ of any $N$-segment to $c(t_0).$
Here $X$ denotes the velocity vector at $c(t_0)$ of an $N$-segment that form part of the boundary of the sector $\Sigma^+_{c(t_0)}$ at $c(t_0).$
Hence, $g_Y(Y,\dot c(t_0))>0$ for the velocity vector $Y$ at $c(t_0)$ of any
 $N$-segment to $c(t_0).$  This contradicts the fact that $c(t_0)$ is a 
critical point of $d_N.$

$\qedd$
\end{proof}

\begin{lemma}
\label{lem9.6}
For each unit speed Jordan arc $c:[a,b]\to C_N,$ there exists
 a measure zero subset $\cal E$ of $d_N\circ c[a,b]$ such that
  if $(d_N\circ c)(t)\notin {\cal E},$ then 
$(d_N\circ c)'(t)\ne 0.$
\end{lemma}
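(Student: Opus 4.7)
The plan is to reduce the statement to a one-dimensional Sard-type fact for Lipschitz functions applied to the composed function $f:=d_N\circ c:[a,b]\to\R$.

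First I would verify that $f$ is Lipschitz. Since $c$ is a unit speed parametrization of a Jordan arc in $C_N$, Theorem \ref{th7.5} together with Lemma \ref{lem7.6} gives $d(c(t_1),c(t_2))\le |t_2-t_1|$ whenever $t_1<t_2$ (the forward distance is bounded by the length of the subarc). The triangle inequality for $d_N$ and the Finsler estimate of Lemma \ref{lem4.1} applied on the compact set $c[a,b]$ then yield a constant $L$ with $|f(t_2)-f(t_1)|\le L|t_2-t_1|$. Consequently, by Theorem \ref{th7.1}, $f$ is differentiable at every point of a full-measure subset $D\subset[a,b]$, and the null set $[a,b]\setminus D$ is mapped by the Lipschitz function $f$ to a null set $\cE_1\subset f[a,b]$.

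Next I would set $\cE_2:=f(Z)$, where $Z:=\{t\in D\mid f'(t)=0\}$, and prove that $\cE_2$ has Lebesgue measure zero by a standard Vitali covering argument. Given $\ve>0$, for each $t\in Z$ pick $\delta_t>0$ such that $|f(s)-f(t)|<\ve|s-t|$ for all $s\in[a,b]$ with $|s-t|<\delta_t$; the collection of closed intervals $\{[t-\delta_t/2,t+\delta_t/2]\}_{t\in Z}$ is a Vitali cover of $Z$, from which a countable disjoint subfamily $\{[t_k-r_k,t_k+r_k]\}$ can be extracted with $Z\subset\bigcup_k[t_k-5r_k,t_k+5r_k]$ up to a null set. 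On each such interval $f$ oscillates by at most $2\ve r_k$, so $\cE_2$ is covered by intervals of total length at most $2\ve\sum_k r_k\le 2\ve(b-a)$. Letting $\ve\searrow 0$ shows $|\cE_2|=0$.

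Finally I would set $\cE:=\cE_1\cup\cE_2$, a measure zero subset of $f[a,b]=d_N\circ c[a,b]$. If $f(t)\notin\cE$, then in particular $f(t)\notin\cE_1$, which forces $t\in D$ (otherwise $f(t)\in f([a,b]\setminus D)=\cE_1$), so $f'(t)$ exists; and $f(t)\notin\cE_2$ forces $t\notin Z$, hence $f'(t)\ne 0$, as required. The step I expect to demand most care is the Vitali covering estimate, because one must be sure the covering argument really controls $\cE_2$ in measure (not just $Z$ itself) — the key observation being that on intervals where the derivative is close to zero the function squeezes the interval to a much shorter one, which is exactly what allows the factor $\ve$ to appear in the length bound for the image.
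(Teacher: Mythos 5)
Your overall decomposition is exactly the paper's: the paper also takes $\cE$ to be the union of the set of critical values of $d_N\circ c$ with the image of its non-differentiability points under the Lipschitz function $d_N\circ c$ (plus the two endpoint values), and the non-differentiability part is handled, as you do, by the fact that a Lipschitz function maps null sets to null sets. Where you genuinely diverge is the critical-value set: the paper disposes of it by citing the one-dimensional Sard theorem for continuous functions (Lemma 3.2 of \cite{ShT}), whereas you prove it by hand, and it is precisely there that your argument has a gap. The family $\{[t-\delta_t/2,t+\delta_t/2]\}_{t\in Z}$, with a single interval per point, is not a Vitali (fine) cover, so the Vitali covering theorem that produces disjoint intervals covering $Z$ up to a null set does not apply to it; what does apply is the $5r$-covering lemma, which is what your inclusion $Z\subset\bigcup_k[t_k-5r_k,t_k+5r_k]$ really invokes. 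But on the enlarged intervals the oscillation bound you use fails: the estimate $|f(s)-f(t_k)|<\ve|s-t_k|$ was arranged only for $|s-t_k|<\delta_{t_k}=2r_k$, while the enlarged interval reaches out to $5r_k$; beyond $\delta_{t_k}$ you control $f$ only through its Lipschitz constant, which destroys the factor $\ve$ and hence the whole measure estimate. Conversely, the disjoint intervals $[t_k-r_k,t_k+r_k]$, on which the $2\ve r_k$ oscillation bound is valid, need not cover $Z$, so their images need not cover $\cE_2=f(Z)$. As written, the step fails.

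The repair is standard and uses only what you have already established. Take the fine cover consisting of all intervals $[t-h,t+h]$ with $t\in Z$ and $0<h<\delta_t$, and apply the genuine Vitali covering theorem to extract a countable disjoint subfamily $\{I_k\}$ with $|Z\setminus\bigcup_k I_k|=0$. Each $I_k$ is centered at a point of $Z$ with radius smaller than the corresponding $\delta$, so $\diam f(I_k)\leq \ve |I_k|$ and $\sum_k \diam f(I_k)\leq \ve (b-a+2)$, say; the leftover set $Z\setminus\bigcup_k I_k$ is null and its image is null because $f$ is Lipschitz, which you proved at the outset. Letting $\ve\searrow 0$ then gives $|\cE_2|=0$. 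Alternatively you may simply quote, as the paper does, the one-variable Sard theorem for continuous functions (Lemma 3.2 of \cite{ShT}), which requires no Lipschitz hypothesis at all. The rest of your proof (the Lipschitz bound for $d_N\circ c$ via Theorem \ref{th7.1}, the treatment of the non-differentiability set, and the final verification that $f(t)\notin\cE$ forces $f'(t)$ to exist and be nonzero) is correct and coincides with the paper's argument.
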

\begin{proof}
It was proved in Lemma 3.2 of \cite{ShT} that
 the Sard theorem holds for a continuous function of one variable, i.e.,
 the set 
 $${\cal E}_1:=\{d_N\circ c(t) \; | \: (d_N\circ c)\:  \textrm{is differentiable at}\:  t\in(a,b)\: \textrm{and} \: (d_N\circ c)'(t)=0\}$$
 is  of measure zero.
 On the other hand, $d_N\circ c$ is differentiable almost everywhere, since
  $d_N\circ c$ is a Lipschitz function. Hence, the image ${\cal E}_2$ of non-differentiable points of $d_N\circ c$ by this Lipschitz function is of measure zero. Therefore, 
 the set ${\cal E}:={\cal E}_1\cup{\cal E}_2\cup\{d_N(c(a)),d_N(c(b))\}$ is 
of measure zero and satisfies the required properties.
$\qedd$
\end{proof}

\begin{theorem}\label{th9.7}
Let $N$ be a closed subset of a 2-dimensional Finsler manifold $(M,F).$
Then, there exists a subset ${\cal E}\subset[0,\sup d_N)$ of measure zero such that for any $t\in(0,\sup d_N)\setminus {\cal E},$ {the set} $d_N^{-1}(t)$ is a union of disjoint continuous curves. Furthermore, any point $q\in d_N^{-1}(t)$ admits at most 
two $N$-segments. In particular, $d_N^{-1}(t)$ is a union of finitely many 
disjoint circles if $N$ is compact.
\end{theorem}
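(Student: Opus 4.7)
The plan is to assemble the exceptional set $\mathcal{E}$ as a countable union of measure-zero pieces. Let $B := \bigcup_{n\geq 1} A_n$ denote the set of branch cut points of $N$; by Lemma \ref{lem9.1} each $A_n$ is locally finite in the second-countable manifold $M$, so $B$ is at most countable and $d_N(B)$ has measure zero in $[0,\sup d_N)$. By Theorem \ref{th9.3}, $C_N \setminus C_N^{\ e}$ decomposes as a union of countably many Jordan arcs $\{c_i\}$; reparametrize each $c_i$ by $\delta$-arclength (justified by Theorem \ref{th6.4} and Lemma \ref{lem7.6}) and let $\mathcal{E}_i$ be the measure-zero set supplied by Lemma \ref{lem9.6}. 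Set $\mathcal{E} := d_N(B) \cup \bigcup_i \mathcal{E}_i$, a countable union of measure-zero sets.

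Fix $t \in (0,\sup d_N) \setminus \mathcal{E}$ and take any $q \in d_N^{-1}(t)$. If $q$ admitted three or more $N$-segments, their velocities at $q$ would be pairwise distinct (by backward uniqueness of the geodesic flow), so in a small strongly convex neighborhood of $q$ they would appear as at least three rays emanating from $q$, cutting that neighborhood into at least three sectors. Hence $q \in B$, forcing $t = d_N(q) \in d_N(B) \subset \mathcal{E}$, a contradiction. This proves the second assertion.

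For the first assertion, I would show $d_N^{-1}(t)$ contains no critical point of $d_N$ and then invoke the Gromov isotopy lemma mentioned just before Lemma \ref{lem9.5}. Suppose some $q \in d_N^{-1}(t)$ were critical. Then $q$ is a cut point admitting at least two $N$-segments, hence by the previous paragraph exactly two; moreover $q$ cannot be an endpoint of $C_N$, because an endpoint admits a unique $N$-segment $\alpha$, and then Theorem A yields $(d\,d_N)_q(\dot\alpha(l)) = g_{\dot\alpha(l)}(\dot\alpha(l),\dot\alpha(l)) = 1 > 0$, which precludes criticality. Thus $q \in C_N \setminus (B \cup C_N^{\ e})$ and $q = c_i(t_0)$ for some $i$ and some interior parameter $t_0$. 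Lemma \ref{lem9.5} then forces $(d_N\circ c_i)'(t_0)=0$, whereas $t = d_N(q) \notin \mathcal{E}_i$ forces $d_N\circ c_i$ to be differentiable at $t_0$ with $(d_N\circ c_i)'(t_0)\neq 0$, a contradiction. With no critical point on the level $d_N^{-1}(t)$, the Gromov isotopy lemma provides a local product structure for $d_N^{-1}((t-\varepsilon,t+\varepsilon))$, exhibiting $d_N^{-1}(t)$ as a closed $1$-dimensional topological submanifold of $M$; intersecting with any compact set shows it is a locally finite disjoint union of Jordan arcs. When $N$ is compact, $d_N$ is proper, so $d_N^{-1}(t)$ is compact and without boundary, hence a finite disjoint union of circles.

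The main obstacle I expect is the correct use of the Gromov isotopy lemma at cut points on $d_N^{-1}(t)$ where $d_N$ is only Lipschitz, not $C^1$. Concretely, one must construct a continuous ``gradient-like'' vector field near $d_N^{-1}(t)$ whose flow strictly increases $d_N$ and is well-defined across the two-$N$-segment cut points; verifying that the non-critical condition rules out branching or self-crossings of the level curve at such cut points (so that $d_N^{-1}(t)$ is genuinely a $1$-manifold rather than a Peano continuum with higher local complexity) is the delicate technical step, even though the paper asserts that the isotopy lemma itself runs exactly as in the Riemannian setting.
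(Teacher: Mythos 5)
Your architecture is the same as the paper's (exceptional set built from the branch points of Lemma \ref{lem9.1} plus the Sard-type sets of Lemma \ref{lem9.6} on the arcs of Theorem \ref{th9.3}; exclusion of critical points via Lemma \ref{lem9.5}; then the isotopy lemma), but there is a genuine gap: you repeatedly conflate the number of $N$-segments at a cut point with the number of sectors there. An \emph{endpoint} of $C_N$ is defined as a cut point admitting a unique \emph{sector}, not a unique $N$-segment, and a cut point can have exactly one sector while admitting uncountably many $N$-segments: take $N$ to be the closed upper unit semicircle in the Euclidean plane and $q$ the origin; every radius from $N$ to $q$ is an $N$-segment, their union is the closed upper half-disc, there is exactly one sector (the lower half-disc), and $q$ is a critical point of $d_N$. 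Your step ``an endpoint admits a unique $N$-segment $\alpha$, so Theorem A gives $(d\,d_N)_q(\dot\alpha(l))=1$ and precludes criticality'' therefore fails at such points, and your exceptional set ${\cal E}=d_N(B)\cup\bigcup_i{\cal E}_i$ does not exclude them. The paper's proof adds precisely the missing piece: it puts ${\cal E}^c:=d_N(C_N{}^c)$ into ${\cal E}$, where $C_N{}^c$ is the (countable) set of endpoints of $C_N$ admitting more than one $N$-segment, and disposes of critical endpoints by observing they would force $t\in{\cal E}^c$.

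The same conflation undermines your argument for ``at most two $N$-segments'': three or more $N$-segments need not cut a convex neighborhood into three or more sectors, because the $N$-segments to $q$ need not be isolated --- a one-parameter family of them sweeping out an angular region contributes only its two extreme arcs to the sector decomposition --- so a point with many $N$-segments need not lie in $B$. You need the additional exclusion of $d_N(C_N{}^c)$ (and the resulting fact that off ${\cal E}$ every point of the level set is either a non-cut point, an endpoint with a unique $N$-segment, or an ordinary two-sector point of one of the arcs $c_i$) to reach that conclusion. Apart from this, your route coincides with the paper's, and your closing remark about the delicacy of running the isotopy lemma across points where $d_N$ is merely Lipschitz is a fair comment on a step the paper itself treats only in passing.
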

\begin{proof}
Let $C_N^b$ denote the set consisting  of all branch cut points of $N.$ Since 
$C_N^b$ is at most countable by Lemma \ref{lem9.1},
the set ${\cal E}^b:= d_N(C_N^b)$ is of measure zero. 
By Theorem \ref{th9.3},
$C_N\setminus C_N{}^e=\bigcup_{i=i}^\infty |c_i|,$
where $c_i: [a_i,b_i]\to C_N$ denotes a unit speed Jordan arc.
Hence, by applying Lemmas \ref{lem9.5} and \ref{lem9.6} for each $c_i,$ there exists a measure zero set ${\cal E}_i\subset (d_N\circ c_i)[a_i,b_i]$  such that
 if $(d_N\circ c_i)(t)\notin {\cal E}_i,$ then 
$(d_N\circ c_i)'(t)\ne 0.$
 Let $C_N{}^c$ denote the set consisting of all endpoints of $C_N $ admitting  more than one $N$-segment.
Since $C_N{}^c$ is a countable set, the set ${\cal E}^c:=d_N(C_N{}^c)$ is of measure zero.
Thus,
the set 
${\cal E}:= \bigcup_{i=1}^\infty {\cal E}_i \cup {\cal E}^b\cup {\cal E}^c$ is of measure zero. 
Choose any $s\in(0,\sup d_N)\setminus{\cal E}, $ and fix it.
Suppose that $d_N^{-1}(s)$ contains a critical point $q$ of $d_N.$
If the point $q$ is an endpoint of $C_N,$ then the point is an endpoint admitting more than one $N$-segment. Hence $s=d_N(q)\in {\cal E}^c,$ which contradicts our assumption $s\notin {\cal E}.$  
This implies that $q\in C_N\setminus C_N{}^e=\bigcup_{i=1}^\infty |c_i|$ and 
the point is a critical point of  $d_N$ on some $c_i[a_i,b_i].$  Suppose that $q=c_i(t_i)$ for some $t_i\in[a_i,b_i].$
Since $s\notin {\cal E}_i,$
$(d_N\circ c_i)(t_i)\ne 0.$ This is a contradiction by Lemma \ref{lem9.5}, since $q=c_i(t_i)$ is a critical point of $d_N.$ Therefore, any point $q\in M\setminus N$ with $d_N(q)\in(0,\sup d_N)\setminus{\cal E}$ is not a critical point and admits at most two $N$-segments.

$\qedd$
\end{proof}

\bigskip

\medskip

\begin{center}
Minoru TANAKA 

\bigskip
Department of Mathematics\\
Tokai University\\
Hiratsuka City, Kanagawa Pref.\\ 
259\,--\,1292 Japan

\medskip
{\tt tanaka@tokai-u.jp}

\bigskip

Sorin V. SABAU\\

\bigskip

Department of Mathematics\\
Tokai University\\
Sapporo City, Hokkaido\\
005\,--\,8601 Japan

\medskip
{\tt sorin@tspirit.tokai-u.jp}
\end{center}

\end{document}